\mathchardef\mhyphen="2D
\newcommand{\mb}{\mathbb}
\newcommand{\ms}{\mathscr}
\newcommand{\mc}{\mathcal}
\newcommand{\tbf}{\textbf}
\newcommand{\tsf}{\textsf}
\newcommand{\msf}{\mathsf}
\newcommand{\n}{\enspace}
\newcommand{\tx}{\text}
\newcommand{\ol}{\overline}
\newcommand{\ul}{\underline}
\newcommand{\spn}{\tx{\normalfont span}}
\newcommand{\wgt}{\mathrm{wt}}
\newcommand{\poly}{\tx{poly}}
\newcommand{\cl}{\mathrm{cl}}
\newcommand{\zcl}{\tx{\normalfont Z-cl}}
\newcommand{\zscl}{\tx{\normalfont Z*-cl}}
\newcommand{\hcl}{\tx{\normalfont h-cl}}
\newcommand{\hscl}{\tx{\normalfont h*-cl}}
\newcommand{\iref}[2]{(\hyperref[#2]{\ref*{#1}.\ref*{#2}})}
\newcommand{\modulo}[1]{\,(\tx{\normalfont mod }#1)}
\newcommand{\Hilbert}{\mathrm{H}}
\newcommand{\certdeg}{\mathrm{cert\mhyphen deg}}
\newcommand{\EHC}{\msf{EHC}}
\newcommand{\EPC}{\msf{EPC}}
\newcommand{\HC}{\msf{HC}}
\newcommand{\PHC}{\msf{PHC}}
\newcommand{\PC}{\msf{PC}}
\newcommand{\PPC}{\msf{PPC}}
\newcommand{\email}[1]{\href{mailto:#1}{\textcolor{NavyBlue}{\texttt{#1}}}}
\newcommand{\SUt}{{\normalfont SU}\(^2\)}
\newcommand{\sqbinom}{\genfrac{[}{]}{0pt}{}}
\theoremstyle{theorem}
\newtheorem{theorem}{Theorem}[section]
\newtheorem{fact}[theorem]{Fact}
\newtheorem{proposition}[theorem]{Proposition}
\newtheorem{observation}[theorem]{Observation}
\newtheorem{lemma}[theorem]{Lemma}
\newtheorem{corollary}[theorem]{Corollary}
\newtheorem{conjecture}[theorem]{Conjecture}
\newtheorem{problem}[theorem]{Problem}
\newtheorem{openproblem}[theorem]{Open Problem}
\newtheoremstyle{TheoremNum}
{\topsep}{\topsep}              
{\itshape}                      
{}                              
{\bfseries}                     
{.}                             
{ }                             
{\thmname{#1}\thmnote{ \bfseries #3}}
\theoremstyle{TheoremNum}
\newtheorem{reptheorem}{Theorem}
\title{Covering Symmetric Sets of the Boolean Cube\\by Affine Hyperplanes}
\author{S. Venkitesh\footnote{Department of Mathematics, IIT Bombay, Powai, Mumbai, India.  Email: \email{venkitesh.mail@gmail.com}.  Website: \url{https://sites.google.com/view/venkitesh}.  Supported by the Senior Research Fellowship of CSIR, HRDG, Government of India.}}
\date{}
\begin{document}
	
	\maketitle
	
	\begin{abstract}
		Alon and F\"uredi~(European J. Combin., 1993) proved that any family of hyperplanes that covers every point of the Boolean cube \(\{0,1\}^n\) except one must contain at least \(n\) hyperplanes.  We obtain two extensions of this result, in characteristic zero, for hyperplane covers of symmetric sets of the Boolean cube (subsets that are closed under permutations of coordinates), as well as for \emph{polynomial covers} of \emph{weight-determined} sets of \emph{strictly unimodal uniform} (\SUt) grids.
		
		As a central tool for solving our problems, we give a combinatorial characterization of (finite-degree) Zariski (Z-) closures of symmetric sets of the Boolean cube.  In fact, we obtain a characterization that concerns, more generally, weight-determined sets of \SUt\,grids.  However, in this generality, our characterization is not of the Z-closures but of a new variant of Z-closures defined exclusively for weight-determined sets, which coincides with the Z-closures in the Boolean cube setting, for symmetric sets.  This characterization admits a linear time algorithm, and may also be of independent interest.  Indeed, as further applications, we\n(i) give an alternate proof of a lemma by Alon et al.~(IEEE Trans. Inform. Theory, 1988), and\n(ii) characterize the \emph{certifying degrees} of weight-determined sets.
		
		In the Boolean cube setting, our above characterization can also be derived using a result of Bernasconi and Egidi (Inf. Comput., 1999) that determines the affine Hilbert functions of symmetric sets.  However, our proof is independent of this result, works for all \SUt\,grids, and could be regarded as being more combinatorial.
		
		We also introduce another new variant of Z-closures to understand better the difference between the hyperplane and polynomial covering problems over uniform grids.  We conclude by introducing a third variant of our covering problems and solving it\footnote{The solution to the third variant in the Boolean cube setting was conjectured in the published version of this work.  Later, the conjecture could be disproved and the problem could be solved using a construction in~\cite{ghosh-kayal-nandi-2022-alon-furedi}.  The solution is included in Section~\ref{sec:updates}, containing all the updates to the published version.} in the Boolean cube setting.
	\end{abstract}
	
	\paragraph{Notations.}  \(\mb{R}\) denotes the set of all real numbers, \(\mb{Z}\) denotes the set of all integers, \(\mb{N}\) denotes the set of all nonnegative integers, and \(\mb{Z}^+\) denotes the set of all positive integers.
	
	\section{Introduction and overview}\label{sec:intro}
	
	We will work over the field \(\mb{R}\).  For any two integers \(a,b\in\mb{Z},\,a\le b\), by abuse of notation, we will denote the \emph{interval} of all integers between \(a\) and \(b\) by \([a,b]\).  Further, the interval of integers \([1,n]\) will also be denoted by \([n]\).  By a \tsf{uniform grid}, we mean a finite grid of the form \([0,k_1-1]\times\cdots\times[0,k_n-1]\), for some \(k_1,\ldots,k_n\in\mb{Z}^+\).  (This means for each \(i\in[n]\), the values taken by the points of the grid in the \(i\)-th coordinate are equispaced.)  Consider a uniform grid \(G=[0,k_1-1]\times\cdots\times[0,k_n-1]\).  For any \(x=(x_1,\ldots,x_n)\in G\), define the \tsf{weight} of \(x\) as \(\wgt(x)=\sum_{i\in[0,n]}x_i\).  We define a subset \(S\subseteq G\) to be \tsf{weight-determined} if
	\[
	x\in S,\,y\in G,\,\wgt(y)=\wgt(x)\quad\implies\quad y\in S.
	\]
	Let \(N=\sum_{i\in[0,n]}(k_i-1)\).  It follows that there is a one-to-one correspondence between weight-determined sets of \(G\) and subsets of \([0,N]\) -- a subset \(E\subseteq[0,N]\) corresponds to a weight-determined set \(\ul{E}\subseteq G\), defined as the set of all elements \(x\in G\) satisfying \(\wgt(x)\in E\).  When \(E=\{j\}\), a singleton set, we will denote \(\ul{E}\) by \(\ul{j}\).  Further, we will freely use this one-to-one correspondence and identify the weight-determined set \(\ul{E}\) with \(E\) without mention, whenever convenient.  This identification will be clear from the context.  In addition, for \(E\subseteq[0,N]\), we will denote \(|\ul{E}|\) by \(\sqbinom{G}{E}\).	It is immediate that \(\sqbinom{G}{j}=\sqbinom{G}{N-j}\), for all \(j\in[0,N]\).
	
	We say the uniform grid \(G\) is \tsf{strictly unimodal} if
	\[
	\sqbinom{G}{0}<\cdots<\sqbinom{G}{\lfloor N/2\rfloor}=\sqbinom{G}{\lceil N/2\rceil}>\cdots>\sqbinom{G}{N}.
	\]
	We will abbreviate the term `strictly unimodal uniform grid' by `\SUt\,grid'.  The \SUt\,condition on uniform grids is not very restrictive; there are enough interesting uniform grids which are \SUt.  For instance, the uniform grid \([0,k-1]^n\) is \SUt.  There is a simple characterization of the \SUt\,grids in terms of their dimensions, given by Dhand~\cite{DHAND201420} (stated in our work as Theorem~\ref{thm:strict-unimodal}).
	
	We will stick with the above notations whenever we consider uniform grids.  Further, we will assume throughout that \(k_i\ge2\), for all \(i\in[n]\).
	
	\paragraph{The Boolean cube setting.}  Consider the Boolean cube \(\{0,1\}^n\).  In this case, for any \(x\in\{0,1\}^n\), the weight \(\wgt(x)\) is equal to \(|x|\), the Hamming weight of \(x\), and \(\sqbinom{\{0,1\}^n}{j}=\binom{n}{j},\,j\in[0,n]\).  It is easy to check that \(\{0,1\}^n\) is strictly unimodal.  We define \(S\subseteq\{0,1\}^n\) to be \tsf{symmetric} if \(S\) is closed under permutations of coordinates.	It follows quite trivially that a subset is weight-determined if and only if it is symmetric.  This is not true though for any other uniform grid.  The Boolean cube \(\{0,1\}^n\subseteq[0,k_1-1]\times\cdots\times[0,k_n-1]\) is clearly symmetric but not weight-determined, if any of the \(k_i\)-s is at least 3.  Also without loss of generality, if \(k_i<k_j\) for some \(i,j\in[n],\,i\ne j\), then trivially the grid \([0,k_1-1]\times\cdots\times[0,k_n-1]\) is weight-determined but not symmetric.  In the case all the \(k_i\)-s are equal, every weight-determined set is indeed symmetric.

	\subsection{Our hyperplane and polynomial covering problems}\label{subsec:covering-problems}
	
	The term \emph{hyperplane covering problem} is commonly used in the literature (see Subsection~\ref{subsec:related}) to refer to any problem of finding the minimum number of hyperplanes covering a finite set in a finite-dimensional vector space (over a field) while satisfying some conditions.  Borrowing this terminology, we use the term \emph{polynomial covering problem} to refer to any problem of finding the minimum degree of a polynomial covering\footnote{We say a polynomial \(P(X_1,\ldots,X_n)\) covers a point \((a_1,\ldots,a_n)\) if \(P(a_1,\ldots,a_n)=0\).} a finite set in a finite-dimensional vector space (over a field) while satisfying some conditions.
	
	We will denote the polynomial ring \(\mb{R}[X_1,\ldots,X_n]\) by \(\mb{R}[\mb{X}]\).  Further, for any \(\alpha\in\mb{N}^n\), we denote the monomial \(X_1^{\alpha_1}\cdots X_n^{\alpha_n}\) by \(\mb{X}^\alpha\).
	
	Alon and F\"uredi~\cite{alon-furedi} considered the following hyperplane covering problem: what is the minimum number of hyperplanes required to cover every point of the Boolean cube \(\{0,1\}^n\), except the origin \(0^n\coloneqq(0,\ldots,0)\)?  They proved a lower bound of \(n\), which was clearly tight -- the family of \(n\) hyperplanes defined by the polynomials \(\sum_{j=1}^nX_j-i\), for \(i\in[n]\), satisfies the required conditions.  In fact, they proved the following stronger result.
	\begin{theorem}[{\cite[Theorem 1]{alon-furedi}}]\!\!\footnote{In~\cite{alon-furedi}, this result was proven true over any field.}\label{thm:alon-furedi-basic}
		If \(P(\mb{X})\in\mb{R}[\mb{X}]\) is a polynomial and \(a\in[0,k_1-1]\times\cdots\times[0,k_n-1]\) such that \(P(x)=0\), for all \(x
		\in[0,k_1-1]\times\cdots\times[0,k_n-1],\,x\ne a\) and \(P(a)\ne0\), then \(\deg P\ge\sum_{i\in[n]}(k_i-1)\).
	\end{theorem}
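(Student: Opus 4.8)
The plan is to argue by explicit interpolation. Write \(N=\sum_{i\in[n]}(k_i-1)\) and abbreviate the grid by \(G=[0,k_1-1]\times\cdots\times[0,k_n-1]\). First I would write down the Lagrange‑type interpolator
\[
Q(\mb{X})=\prod_{i=1}^{n}\ \prod_{\substack{0\le j\le k_i-1\\ j\ne a_i}}\frac{X_i-j}{a_i-j},
\]
which has total degree exactly \(N\) — its leading monomial \(X_1^{k_1-1}\cdots X_n^{k_n-1}\) carries the nonzero coefficient \(\prod_{i}\big(\prod_{j\ne a_i}(a_i-j)\big)^{-1}\) — satisfies \(Q(a)=1\), and vanishes at every other point of \(G\), since any such point differs from \(a\) in some coordinate \(i\) and then one of the factors \(X_i-j\) kills it. Consequently \(P\) and \(P(a)\cdot Q\) take the same values at every point of \(G\) (namely \(0\) off \(a\) and \(P(a)\) at \(a\)), so it suffices to show that any polynomial representing this function on \(G\) has total degree at least \(N\).

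The engine is the standard reduction to a canonical representative modulo the vanishing ideal of \(G\). For each \(i\) put \(g_i(X_i)=\prod_{j=0}^{k_i-1}(X_i-j)\); it is monic in \(X_i\) of degree \(k_i\), and all its remaining terms have strictly smaller \(X_i\)-degree. Repeatedly rewriting \(X_i^{k_i}\) via \(g_i\), I would reduce \(P\) to a polynomial \(\wt P\) with \(\deg_{X_i}\wt P\le k_i-1\) for every \(i\), agreeing with \(P\) on \(G\), and with \(\deg\wt P\le\deg P\) — the degree bound holds because each rewrite replaces a monomial by monomials of no larger total degree. The polynomial \(Q\) is already in this \emph{multireduced} form. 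Now I would invoke uniqueness of multireduced representatives: the \(\prod_i k_i=|G|\) monomials \(\mb{X}^\alpha\) with \(0\le\alpha_i\le k_i-1\) restrict to linearly independent functions on \(G\) — a tensor‑product Vandermonde computation, equivalently the existence of Lagrange interpolation on \(G\) — hence they form a basis of the space of real functions on \(G\), so a multireduced polynomial vanishing on all of \(G\) is identically zero.

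Applying this to \(\wt P-P(a)\,Q\), which is multireduced and vanishes on \(G\), yields \(\wt P=P(a)\,Q\). Since \(P(a)\ne0\) we get \(\deg P\ge\deg\wt P=\deg Q=N\), as claimed. (An alternative to the interpolation step is induction on \(n\): peel off the last variable, write \(P=\sum_{t\ge0}X_n^{t}P_t(X_1,\dots,X_{n-1})\), and track the degree of the appropriate \(P_t\) after restricting the first \(n-1\) coordinates to a suitable line through \(a\); but the interpolation argument seems cleaner to present.)

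The only steps that need genuine care are the two structural facts above: that reduction modulo the \(g_i\) does not increase total degree (immediate from monicity of \(g_i\) in \(X_i\) and the fact that its other terms have smaller \(X_i\)-degree, hence smaller total degree in that slot), and the uniqueness of the multireduced representative, which is the real crux and rests on the monomials \(\mb{X}^\alpha\) with \(\alpha_i\le k_i-1\) forming a basis for the functions on \(G\); everything else is bookkeeping. Finally I would note, as the cited footnote indicates, that nothing uses \(\mb{R}\) specifically — the argument goes through over any field, provided the \(k_i\) coordinate values in each direction are distinct field elements, which is automatic when they are the images of \(0,1,\dots,k_i-1\) and the characteristic is zero or large enough.
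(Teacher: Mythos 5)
Your proof is correct, and it is essentially the standard argument that the paper points to without spelling out: the paper cites this theorem and remarks that it follows from the Combinatorial Nullstellensatz (Theorem~\ref{thm:CN}), which in the form stated there is exactly your crux step — that the evaluation map on multireduced polynomials (those spanned by monomials $\mb{X}^\gamma$, $\gamma\in G$) is injective, which you establish via the tensor-Vandermonde/Lagrange basis. Your reduction modulo the univariate $g_i$ to a degree-nonincreasing multireduced representative, combined with uniqueness of that representative and comparison with the explicit interpolator $Q$ of degree $N$, is precisely the usual derivation.
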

	
	In this work, we will consider extensions of this result to weight-determined sets of a uniform grid.  Let \(G\) be a uniform grid.  For a weight-determined set \(\ul{E}\), where \(E\subsetneq[0,N]\), we say a family of hyperplanes \(\mc{H}\) in \(\mb{R}^n\) is
	\begin{itemize}
		\item  a \tsf{nontrivial hyperplane cover} of \(\ul{E}\) if
		\[
		\ul{E}\subseteq G\cap\bigg(\bigcup_{H\in\mc{H}}H\bigg)\ne G.
		\]
		\item  a \tsf{proper hyperplane cover} of \(\ul{E}\) if
		\[
		\ul{E}\subseteq\bigcup_{H\in\mc{H}}H\quad\tx{and}\quad\ul{j}\not\subseteq\bigcup_{H\in\mc{H}}H,\tx{ for every }j\in[0,N]\setminus E.
		\]
	\end{itemize}
	Let \(\HC_G(E)\) and \(\PHC_G(E)\) denote the minimum sizes of a nontrivial hyperplane cover and a proper hyperplane cover respectively, for a weight-determined set \(\ul{E},\,E\subsetneq[0,N]\).  In the case \(G=\{0,1\}^n\), we will instead use the notations \(\HC_n(E)\) and \(\PHC_n(E)\) respectively.
	
	For any \(P(\mb{X})\in\mb{R}[\mb{X}]\), we denote by \(\mc{Z}_G(P)\), the set of all \(a\in G\) such that \(P(a)=0\).  For a weight-determined set \(\ul{E}\), where \(E\subsetneq[0,N]\), we say a polynomial \(P(\mb{X})\in\mb{R}[\mb{X}]\) is
	\begin{itemize}
		\item  a \tsf{nontrivial polynomial cover} of \(\ul{E}\) if
		\[
		\ul{E}\subseteq\mc{Z}_G(P)\ne G.
		\]
		\item  a \tsf{proper polynomial cover} of \(\ul{E}\) if
		\[
		\ul{E}\subseteq\mc{Z}_G(P)\quad\tx{and}\quad\ul{j}\not\subseteq\mc{Z}_G(P),\tx{ for every }j\in[0,N]\setminus E.
		\]
	\end{itemize}
	Let \(\PC_G(E)\) and \(\PPC_G(E)\) denote the minimum degree of a nontrivial polynomial cover and a proper polynomial cover respectively, for a weight-determined set \(\ul{E},\,E\subsetneq[0,N]\).  In the case \(G=\{0,1\}^n\), we will instead use the notations \(\PC_n(E)\) and \(\PPC_n(E)\).
	
	We are interested in the following problems.
	\begin{problem}\label{prob:main}
		Let \(G\) be a uniform grid.  For all \(E\subsetneq[0,N]\),
		\begin{enumerate}[(a)]
			\item  find \(\HC_G(E)\) and \(\PHC_G(E)\). 
			\item  find \(\PC_G(E)\) and \(\PPC_G(E)\).
		\end{enumerate}		
	\end{problem}
	All the above variants of hyperplane and polynomial covers coincide for the symmetric set \(\ul{[N]}\), and hence in our notation, Alon and F\"uredi~\cite{alon-furedi} proved the following.
	\begin{theorem}[\cite{alon-furedi}]\label{thm:alon-furedi}
		\(\HC_G([N])=\PHC_G([N])=\PC_G([N])=\PPC_G([N])=N\).
	\end{theorem}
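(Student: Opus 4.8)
The plan is to squeeze all four quantities between \(N\) (from below, via the quoted bound of Alon and F\"uredi) and \(N\) (from above, via one explicit cover), after first observing that for the set \(\ul{[N]}\) the four covering notions essentially collapse to a single one. For any \(E\subsetneq[0,N]\) I would first record the chain
\[
\PC_G(E)\;\le\;\PPC_G(E)\;\le\;\PHC_G(E),\qquad \PC_G(E)\;\le\;\HC_G(E)\;\le\;\PHC_G(E).
\]
For this, note that if \(\mc{H}\) is a proper hyperplane cover of \(\ul{E}\), then for any \(j\in[0,N]\setminus E\) the set \(\ul{j}\) is nonempty (every weight in \([0,N]\) is attained in \(G\)), so \(\ul{j}\not\subseteq\bigcup_{H\in\mc{H}}H\) forces \(G\cap\bigcup_{H\in\mc{H}}H\ne G\); hence a proper hyperplane cover is a nontrivial one, and the same reasoning applies verbatim to polynomial covers. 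Furthermore, replacing \(\mc{H}=\{H_1,\dots,H_m\}\) by \(P=\prod_{i=1}^{m}\ell_i\), with \(\ell_i\) an affine form defining \(H_i\), converts a nontrivial (resp.\ proper) hyperplane cover of size \(m\) into a nontrivial (resp.\ proper) polynomial cover of degree \(m\), since \(\mc{Z}_G(P)=G\cap\bigcup_{i}H_i\). The displayed inequalities follow, so it suffices to prove \(\PC_G([N])\ge N\) and \(\PHC_G([N])\le N\).

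For the lower bound, observe that \([0,N]\setminus[N]=\{0\}\) and \(\ul{0}=\{0^n\}\), so a nontrivial polynomial cover of \(\ul{[N]}\) is precisely a polynomial \(P(\mb{X})\in\mb{R}[\mb{X}]\) with \(P(x)=0\) for all \(x\in G\), \(x\ne 0^n\), and \(P(0^n)\ne0\). Theorem~\ref{thm:alon-furedi-basic} applied with \(a=0^n\) then gives \(\deg P\ge\sum_{i\in[n]}(k_i-1)=N\), so \(\PC_G([N])\ge N\).

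For the upper bound, I would exhibit the family \(\mc{H}=\{H_1,\dots,H_N\}\), where \(H_i\) is the hyperplane \(\sum_{j=1}^{n}X_j=i\). Each \(x\in G\) with \(x\ne 0^n\) satisfies \(\wgt(x)\in[N]\) and hence lies on \(H_{\wgt(x)}\), whereas \(\wgt(0^n)=0\notin[N]\) shows \(0^n\) lies on no \(H_i\); thus \(\mc{H}\) is a proper (in particular, nontrivial) hyperplane cover of \(\ul{[N]}\) of size \(N\), giving \(\PHC_G([N])\le N\). Combining the two bounds with the inequality chain yields \(\HC_G([N])=\PHC_G([N])=\PC_G([N])=\PPC_G([N])=N\). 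I do not anticipate any real obstacle: the entire content sits in the already-stated Theorem~\ref{thm:alon-furedi-basic}, and the only point demanding a little care is the bookkeeping that makes the four covering variants agree for this particular \(E\).
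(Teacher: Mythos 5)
Your proof is correct and matches the approach the paper intends: the paper presents this result as an immediate consequence of Theorem~\ref{thm:alon-furedi-basic} together with the remark that the covering variants coincide for \(\ul{[N]}\) (since \(\ul{0}=\{0^n\}\) is a single point), and you have simply spelled out that sandwiching argument in full, with the \(N\) hyperplanes \(\sum_j X_j=i\) for the upper bound. (One small side note: your inequality \(\PPC_G\le\PHC_G\), obtained by multiplying affine forms, is the right direction; the paper's introduction states this inequality with the opposite sign, which appears to be a typo and does not affect the result.)
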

	It is immediate from the definitions that \(\PHC_G(E)\ge\HC_G(E)\), \(\PPC_G(E)\ge\PC_G(E)\), \(\HC_G(E)\ge\PC_G(E)\), and \(\PPC_G(E)\ge\PHC_G(E)\), for all \(E\subsetneq[0,N]\).  In this work, we will give combinatorial characterizations of \(\PC_G(E)\) and \(\PPC_G(E)\), for all \(E\subsetneq[0,N]\), for an \SUt\,grid \(G\).  The characterization of \(\HC_G(E)\) and \(\PHC_G(E)\), for all \(E\subsetneq[0,N]\), for an \SUt\,grid \(G\) is left open.  However, in the Boolean cube setting, we will obtain \(\HC_n(E)=\PC_n(E)\) and \(\PHC_n(E)=\PPC_n(E)\), for all \(E\subsetneq[0,n]\).  In short, we will solve Problem~\ref{prob:main} (b) for \SUt\,grids, and further solve Problem~\ref{prob:main} (a) for the Boolean cube.  Problem~\ref{prob:main} (a) for \SUt\,grids is open.
	
	For every \(i\in[0,N]\), let \(T_{N,i}=[0,i-1]\cup[N-i+1,N]\). (This means \(T_{N,0}=\emptyset\).)  For any \(d,i\in[0,N],\,i\le d\) and \(E\subseteq[0,N]\), we define \(E\) to be \tsf{\((d,i)\)-admitting} if \(E\cup T_{N,i}\ne[0,N]\) and \(|E\setminus T_{N,i}|\le d-i\).  Further, we define \(E\) to be \tsf{\(d\)-admitting} if \(E\) is \((d,i)\)-admitting, for some \(i\in[0,d]\).  Our combinatorial characterizations that answer Problem~\ref{prob:main} (b) for \SUt\,grids are as follows.
	\begin{theorem}\label{thm:main-poly-cover}
		Let \(G\) be an \SUt\,grid.  For any \(E\subsetneq[0,N]\),
		\begin{enumerate}[(a)]
			\item  \(\PC_G(E)=\min\{d\in[0,N]:\tx{\(E\) is \(d\)-admitting}\}\).
			\item  \(\PPC_G(E)=|E|-\max\{i\in[0,\lfloor N/2\rfloor]:T_{N,i}\subseteq E\}\).
		\end{enumerate}
	\end{theorem}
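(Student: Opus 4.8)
Both parts concern thresholds for the finite-degree \(\mathrm{Z}^{*}\)-closure of \(\ul{E}\): a polynomial of degree at most \(d\) is a nontrivial cover of \(\ul{E}\) precisely when \(\zscl_d(\ul{E})\ne G\), and a proper cover precisely when \(\zscl_d(\ul{E})=\ul{E}\). So the plan is to establish a combinatorial description of \(\zscl_d(\ul{E})\) as \(d\) grows — equivalently, matched upper and lower bounds, the upper bounds by exhibiting polynomials and the lower bounds by Alon--F\"uredi-type degree estimates — and then to read off both displayed formulas by elementary bookkeeping with the intervals \(T_{N,i}\).

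For the upper bounds the key ingredient is, for each \(i\in[0,\lfloor N/2\rfloor]\), a polynomial \(R_i\in\mb{R}[\mb{X}]\) of degree \(i\) whose set of identically-vanishing levels (the \(w\) for which \(R_i\equiv 0\) on \(\ul{w}\)) is exactly \(T_{N,i}\). The naive symmetric candidate \(\prod_{w\in T_{N,i}}(X_1+\cdots+X_n-w)\) has degree \(2i\), so \(R_i\) must be genuinely multivariate and tailored to \(G\); this is precisely where strict unimodality (Theorem~\ref{thm:strict-unimodal}) is used, to guarantee that a degree-\(\le i\) polynomial can be forced to vanish on the \(2i\) extreme levels while still being nonzero somewhere on each middle level. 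I would construct \(R_i\) by an explicit formula built from falling-factorial polynomials in well-chosen partial sums of the \(X_\ell\), or by induction on \(i\). Granting \(R_i\): when \(E\) is \((d,i)\)-admitting, put
\[
P \;=\; R_i\cdot\prod_{j\in E\setminus T_{N,i}}\bigl(X_1+\cdots+X_n-j\bigr),
\]
so that \(\deg P\le i+|E\setminus T_{N,i}|\le d\); since \(\prod_{j\in E\setminus T_{N,i}}(w-j)\ne 0\) whenever \(w\notin E\setminus T_{N,i}\), the polynomial \(P\) vanishes identically on exactly the levels \(T_{N,i}\cup(E\setminus T_{N,i})=E\cup T_{N,i}\), which by the admissibility hypothesis is \(\ne[0,N]\). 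Hence \(\ul{E}\subseteq\mc{Z}_G(P)\ne G\), giving \(\PC_G(E)\le d\) and so ``\(\le\)'' in (a). For (b) take \(i=i^\star\coloneqq\max\{i\in[0,\lfloor N/2\rfloor]:T_{N,i}\subseteq E\}\); then \(E\cup T_{N,i^\star}=E\), so the same \(P\) has degree \(|E|-i^\star\) and vanishes identically on exactly the levels of \(E\), i.e.\ it is a proper cover, giving ``\(\le\)'' in (b).

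For the lower bounds I would argue contrapositively. Fix \(j\in[0,N]\setminus E\) and assume \(d<\min_{0\le i\le\min(j,N-j)}\bigl(i+|E\setminus T_{N,i}|\bigr)\); I claim that then every \(P\) of degree \(\le d\) with \(\ul{E}\subseteq\mc{Z}_G(P)\) also satisfies \(\ul{j}\subseteq\mc{Z}_G(P)\), i.e.\ \(j\in\zscl_d(\ul{E})\). If not, \(P(a)\ne 0\) for some \(a\in\ul{j}\); restricting \(P\) to suitable sub-grids through \(a\) and applying Theorem~\ref{thm:alon-furedi-basic}, each middle level of \(E\) on which \(P\) vanishes costs one unit of degree, and the low and high extreme levels of \(E\) cost, for whichever ``extreme budget'' \(i\le\min(j,N-j)\) one uses, a further \(i\) units — the latter via a staircase-interpolation fact (that the standard monomials \(\mb{X}^\alpha\) with \(\alpha_1+\cdots+\alpha_n\le i-1\) restrict to a basis of the space of functions on \(\ul{[0,i-1]}\), so that no polynomial of degree \(<i\) can vanish on all of \(\ul{T_{N,i}}\) without vanishing on \(G\)), with strict unimodality again controlling the interplay. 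Optimizing over \(i\) gives \(\deg P\ge\min_{0\le i\le\min(j,N-j)}(i+|E\setminus T_{N,i}|)>d\), a contradiction. Taking the supremum over \(j\notin E\) yields \(\PC_G(E)\ge\min\{d:E\text{ is }d\text{-admitting}\}\); and since \(i^\star\le\min(j,N-j)\) for every \(j\notin E\), the maximizing \(j\) is \(i^\star\) (or \(N-i^\star\)), which gives \(\PPC_G(E)\ge|E|-i^\star\).

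With the two-sided bounds in place, the displayed formulas follow by routine manipulation: for (a), \(\min\{d:E\text{ is }d\text{-admitting}\}=\min\{\,i+|E\setminus T_{N,i}|:E\cup T_{N,i}\ne[0,N]\,\}\); for (b), the maximum over blocked levels \(j\notin E\) of \(\min_{0\le i\le\min(j,N-j)}(i+|E\setminus T_{N,i}|)\) collapses to \(|E|-i^\star\). I expect essentially all the difficulty — and all the use of strict unimodality — to sit in the matched pair of claims about the extreme levels: the existence of \(R_i\) of degree exactly \(i\), and, dually, that those \(i\) degrees are unavoidable. Together these constitute the combinatorial characterization of finite-degree \(\mathrm{Z}^{*}\)-closures announced in the introduction; everything else is bookkeeping.
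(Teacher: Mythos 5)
Your top-level strategy — reduce both quantities to thresholds for the finite-degree $\mathrm{Z}^*$-closure and prove matched bounds, with Alon--F\"uredi-type estimates on one side and explicit covers on the other — is also the paper's. But your lower-bound argument has a real gap. The heuristic that "each middle level of $E$ costs one unit of degree, and the extremes cost $i$ units" is not a proof: a polynomial vanishing identically on a level $\ul{t}$ need not be divisible by $\sum_\ell X_\ell - t$ (e.g.\ $X_1-X_2$ vanishes on $\ul{0}$ and $\ul{N}$ in $\{0,1\}^n$ with no such factor), so $\deg P$ does not decompose as a sum of per-level costs. Moreover, if the heuristic actually gave $\deg P \ge i + |E\setminus T_{N,i}|$ for \emph{every} $i\le\min(j,N-j)$, it would yield the max over $i$, which is false — in $\{0,1\}^5$ with $E=\{0,3,5\}$ the polynomial $(X_1-X_2)(\sum_\ell X_\ell - 3)$ has degree $2 < 3 = |E|$ and is a proper cover, already contradicting the $i=0$ bound. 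Only the min over $i$ is true, and a single application of Alon--F\"uredi on a sub-grid does not establish it. The paper instead introduces the operator $\ol{L}_{N,d}$ and proves the containment $\ol{L}_{N,d}(E)\subseteq\zscl_{G,d}(E)$ via the Closure Builder Lemma~\ref{lem:poly-clo-builder} (a product trick: multiply $P$ by \emph{further} linear factors to get a polynomial vanishing at all but one point of $G$, then invoke Alon--F\"uredi), and then \emph{iterates} that lemma using the closure-operator property of $\zscl_{G,d}$ (Proposition~\ref{pro:Z*-prop}). The iteration is essential: for $E=\{1,2,8,9\}$, $N=10$, $d=2$, $j=5$, two passes are needed (first to reach $T_{10,3}$, then to fill $[0,N]$), and I don't see how "restricting $P$ to suitable sub-grids through $a$" collapses this to one shot.

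The upper bound has a smaller but still substantive gap: you need a single $R_i$ of degree $i$ whose level-wise zero set is exactly $T_{N,i}$, and you gesture at an explicit formula via falling factorials or induction without giving one. No such clean formula is offered in the paper for general SU grids (the Boolean cube has $\prod_{t}(X_{2t-1}-X_{2t})$, used in Lemma~\ref{lem:main-lem-hcl}, but this does not port). The paper proves the needed fact (Lemma~\ref{lem:T-clo-poly}: $\zscl_{G,i}(T_{N,i})=T_{N,i}$) non-constructively by a dimension count — the linear system forcing a degree-$\le i$ polynomial to vanish on $\ul{T'_{N,i}}$ has more unknowns than equations precisely because $\sqbinom{G}{i}>\sqbinom{G}{i-1}$ — followed by the Combinatorial Nullstellensatz; and it produces only a per-level witness, with the combination into a single polynomial handled separately in the proof of Lemma~\ref{lem:poly-cover-defn}(b). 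You should either reproduce that counting argument or cite it, rather than asserting an explicit $R_i$.
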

	Further, we answer Problem~\ref{prob:main} (a), in the Boolean cube setting, as follows.
	\begin{theorem}\label{thm:main-hyp-cover}
		Consider the Boolean cube \(\{0,1\}^n\).  For any \(E\subsetneq[0,n]\),
		\begin{enumerate}[(a)]
			\item  \(\HC_n(E)=\PC_n(E)=\min\{d\in[0,n]:\tx{\(E\) is \(d\)-admitting}\}\).
			\item  \(\PHC_n(E)=\PPC_n(E)=|E|-\max\{i\in[0,\lfloor n/2\rfloor]:T_{n,i}\subseteq E\}\).
		\end{enumerate}
	\end{theorem}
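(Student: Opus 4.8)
\quad The plan is to push everything through the polynomial side, where Theorem~\ref{thm:main-poly-cover} already applies, and then to realise those optima by explicit hyperplane families. Since the Boolean cube \(\{0,1\}^n\) is an \SUt\,grid with \(N=n\), the two equalities \(\PC_n(E)=\min\{d\in[0,n]:E\text{ is }d\text{-admitting}\}\) and \(\PPC_n(E)=|E|-\max\{i\in[0,\lfloor n/2\rfloor]:T_{n,i}\subseteq E\}\) are exactly Theorem~\ref{thm:main-poly-cover} for \(G=\{0,1\}^n\). So it remains to prove \(\HC_n(E)=\PC_n(E)\) and \(\PHC_n(E)=\PPC_n(E)\). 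One inequality in each is free: replacing each hyperplane of a cover by a linear polynomial defining it and multiplying turns a nontrivial (resp.\ proper) hyperplane cover of size \(m\) into a nontrivial (resp.\ proper) polynomial cover of degree \(m\), so \(\HC_n(E)\ge\PC_n(E)\) and \(\PHC_n(E)\ge\PPC_n(E)\). Thus it suffices to produce a nontrivial hyperplane cover of \(\ul E\) of size \(\PC_n(E)\) and a proper hyperplane cover of \(\ul E\) of size \(\PPC_n(E)\).

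The engine of both constructions is the following ``reflection gadget''. Fix \(1\le i\le\lfloor n/2\rfloor\) and consider the \(i\) hyperplanes \(H_k:X_k-X_{n+1-k}=0\), \(k\in[i]\); the pairs \(\{k,n+1-k\}\) are pairwise disjoint. A point \(x\in\{0,1\}^n\) lying in none of \(H_1,\dots,H_i\) has \(x_k\ne x_{n+1-k}\) for every \(k\in[i]\), hence carries exactly one \(1\) in each of these \(i\) pairs and arbitrary bits on the remaining \(n-2i\) coordinates; so its Hamming weight lies in \([i,n-i]\), and conversely every weight in \([i,n-i]\) is attained by such a point. Therefore \(\bigcup_{k\in[i]}H_k\) contains all points of Hamming weight outside \([i,n-i]\), i.e.\ it contains \(\ul{T_{n,i}}\), while for every \(j\in[i,n-i]\) it omits at least one point of weight \(j\). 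The moral is that \(i\) hyperplanes already cover \(\ul{T_{n,i}}\): since no single affine hyperplane contains both \(\ul\ell\) and \(\ul{n-\ell}\) for \(1\le\ell<n/2\) (containing \(\ul\ell\) forces its normal vector to be constant, which then fails on \(\ul{n-\ell}\)), any cover of \(\ul{T_{n,i}}\) that respects weight classes would need \(2i\) hyperplanes; beating this is the one genuinely non-formal ingredient, and everything else is bookkeeping with the definitions of \(T_{n,i}\) and of \((d,i)\)-admissibility.

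Granting the gadget, part (a) runs as follows. Put \(d=\PC_n(E)\) and fix \(i\in[0,d]\) such that \(E\) is \((d,i)\)-admitting; since \(E\cup T_{n,i}\ne[0,n]\) one must have \(i\le\lfloor n/2\rfloor\) (for larger \(i\) one has \(T_{n,i}=[0,n]\)). Take the family consisting of \(H_1,\dots,H_i\) together with the hyperplanes \(\{\sum_jX_j=\ell\}\) for \(\ell\in E\setminus T_{n,i}\) (at most \(d-i\) of them): this has at most \(d\) hyperplanes, and it covers \(\ul E=\ul{E\cap T_{n,i}}\cup\ul{E\setminus T_{n,i}}\) because \(\bigcup_{k\in[i]}H_k\supseteq\ul{T_{n,i}}\) and the added hyperplanes meet \(\{0,1\}^n\) in exactly \(\ul{E\setminus T_{n,i}}\). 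For nontriviality, pick \(j_0\in[0,n]\setminus(E\cup T_{n,i})\); then \(j_0\in[i,n-i]\), so the gadget leaves some weight-\(j_0\) point out of \(\bigcup_{k\in[i]}H_k\), and the added hyperplanes miss \(\ul{j_0}\) entirely since \(j_0\notin E\); hence \(\ul{j_0}\) is not covered, giving \(\HC_n(E)\le d\). Part (b) is parallel: with \(i^{*}=\max\{i\in[0,\lfloor n/2\rfloor]:T_{n,i}\subseteq E\}\), take \(H_1,\dots,H_{i^{*}}\) together with \(\{\sum_jX_j=\ell\}\) for the \(|E\setminus T_{n,i^{*}}|=|E|-2i^{*}\) values \(\ell\in E\setminus T_{n,i^{*}}\) (all lying in \([i^{*},n-i^{*}]\)); this family has \(i^{*}+(|E|-2i^{*})=|E|-i^{*}\) hyperplanes and covers \(\ul E\). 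If \(j\in[0,n]\setminus E\), then \(j\notin T_{n,i^{*}}\), so \(j\in[i^{*},n-i^{*}]\) and \(j\) is none of the chosen \(\ell\)'s; the gadget supplies a weight-\(j\) point avoided by every hyperplane of the family, so \(\ul j\) is not covered. Hence the cover is proper and \(\PHC_n(E)\le|E|-i^{*}=\PPC_n(E)\), which completes the proof.
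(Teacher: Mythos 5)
The proposal is correct, and it takes a genuinely more direct route than the paper. The paper proves Theorem~\ref{thm:main-hyp-cover} by first introducing the finite-degree h-closure operator, re-proving analogues of the Closure Builder Lemma and of Lemma~\ref{lem:T-clo-poly} for products of linear forms (Lemma~\ref{lem:main-lem-hcl}), deducing that h-closures coincide with Z-closures on symmetric sets of the cube (Theorem~\ref{thm:hcl=zcl}), and then reading the theorem off from Observation~\ref{obs:hyp-cover-defn}, Proposition~\ref{prop:poly-L-finer}, and Observation~\ref{obs:tight-construction}. You instead take Theorem~\ref{thm:main-poly-cover} as given, use the trivial inequalities \(\HC_n\ge\PC_n\) and \(\PHC_n\ge\PPC_n\), and then exhibit explicit hyperplane families matching the polynomial bounds. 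The central gadget is the same in both arguments --- a product of differences along \(i\) disjoint coordinate pairs (the paper uses the pairs \(\{2k-1,2k\}\), you use \(\{k,n+1-k\}\); either works for \(i\le\lfloor n/2\rfloor\)), and the verification that the weights outside \([i,n-i]\) are all covered while each weight in \([i,n-i]\) is witnessed is also the same. What you lose by not doing the paper's detour is the standalone statement that \(\hcl_{n,d}=\zcl_{n,d}\) on symmetric sets, which is of some independent interest; what you gain is a shorter, self-contained construction in which the tight family is written down once, covering both parts, rather than being reconstituted from closure statements. One small side benefit of your write-up: your witness point for a weight \(j\in[i,n-i]\) (one coordinate set in each pair, then pad) is unambiguously correct, whereas the paper's stated witness \(x^{(j)}=1^j0^{n-j}\) in the proof of Lemma~\ref{lem:main-lem-hcl}\,(b) does not in fact avoid the pair hyperplanes \(X_{2t-1}=X_{2t}\) for \(j\ge2\); the underlying claim there is still correct, but your explicit choice is the right one.
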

	
	\subsection{Related work}\label{subsec:related}
	
	Alon and F\"uredi~\cite{alon-furedi} mention that their hyperplane covering problem was extracted by B\'ar\'any from Komj\'ath~\cite{komjath1994partitions}.  Some of the extensions and variants studied subsequently (over \(\mb{R}\)) are as follows.
	\begin{itemize}
		\item  Linial and Radhakrishnan~\cite{linial2005essential} gave an upper bound of \(\lceil n/2\rceil\) and a lower bound of \(\Omega(\sqrt{n})\) on the minimum size of \emph{essential hyperplane covers} of the Boolean cube -- a family of hyperplanes \(\mc{H}\) is an \tsf{essential hyperplane cover} if \(\mc{H}\) is a minimal family covering \(\{0,1\}^n\), and each coordinate is influential for a linear polynomial representing some hyperplane in \(\mc{H}\).  Saxton~\cite{saxton2013essential} later gave a tight bound of \(n+1\) for this problem, in the case where the linear polynomials representing the hyperplanes are restricted to be of the form \(\sum_{i\in[0,n]}a_iX_i-b\), where \(a_i\ge0\) for all \(i\in[n]\), and \(b\ge0\).  Recently, Yehuda and Yehudayoff~\cite{yehuda2021lower} improved the lower bound in the unrestricted case to \(\Omega(n^{0.52})\).
		
		\item  K\'os, M\'esz\'aros and R\'onyai~\cite{kos2012alon} introduced the following multiplicity extension: given a finite grid \(S=S_1\times\cdots\times S_n\) with each \((S_i,m_i)\) being a multiset such that \(0\in S_i,\,m_i(0)=1\), find the minimum number of hyperplanes such that each point \(s\in S\setminus\{0\}\) is covered by at least \(\sum_{i\in[n]}m_i(s_i)-n+1\) hyperplanes and the point 0 is not covered by any hyperplane.  They gave a tight lower bound of \(\sum_{i\in[n]}m_i(S_i)-n\).  This bound is in fact true over any field.
		
		\item  Aaronson, Groenland, Grzesik, Johnston and Kielak~\cite{aaronson2020exact} considered \emph{exact hyperplane covers} of subsets of the Boolean cube -- a family of hyperplanes \(\mc{H}\) is an \tsf{exact hyperplane cover} of a subset \(S\subseteq\{0,1\}^n\) if \(\big(\bigcap_{H\in\mc{H}}H\big)\cap\{0,1\}^n=S\).  They obtained tight bounds on the minimum size of exact hyperplane covers for subsets \(S\) with \(|\{0,1\}^n\setminus S|\le4\), and asymptotic bounds for general subsets.
		
		\item  Clifton and Huang~\cite{clifton2020almost} considered another multiplicity version of the hyperplane covering problem: find the least number of hyperplanes required to cover all points of the Boolean cube except the origin \(k\) times and not cover the origin at all.  They proved the tight bound of \(n+1\) and \(n+3\), for \(k=2\) and \(k=3\) respectively, and gave a lower bound of \(n+k+1\) for \(k\ge4\).  Sauermann and Wigderson~\cite{sauermann2020polynomials} considered the polynomial version of this problem: find the least degree of a polynomial that vanishes at all points of the Boolean cube, except the origin, \(k\) times and vanishes at the origin \(j\) times, for some \(j<k\).  They gave the tight bounds \(n+2k-3\) for \(j\le k-2\), and \(n+2k-2\) for \(j=k-1\).
	\end{itemize}
	
	Several more variants and extensions, in particular over positive characteristic, have appeared in the literature both before and after Alon and F\"uredi~\cite{alon-furedi} -- in Jamison~\cite{JAMISON1977253}, Brouwer~\cite{BROUWER1978251}, Ball~\cite{BALL2000441}, Zanella~\cite{ZANELLA2002381}, Ball and Serra~\cite{ball2009punctured}, Blokhuis~\cite{blokhuis2010covering}, and Bishnoi, Boyadzhiyska, Das and M\'esz\'aros~\cite{bishnoi2021subspace}, to quote a few.  For a detailed history of the hyperplane covering problems as well as the polynomial method, see, for instance, the nice introduction in~\cite{bishnoi2021subspace}.

	\subsection{Finite-degree Z-closures and Z*-closures, and polynomial covering problems}\label{subsec:comb-charac-Z*}
	
	The \emph{finite-degree Zariski closure} was defined by Nie and Wang~\cite{nie2015hilbert} towards obtaining a better understanding of the applications of the polynomial method to combinatorial geometry.  This is a closure operator\footnote{A closure operator on a set system \(\mc{F}\) (over a ground set) is any map \(\cl:\mc{F}\to\mc{F}\) satisfying:\n(i) \(A\subseteq\cl(A),\,\forall\,A\in\mc{F}\), \n(ii) \(\cl(A)\subseteq\cl(B),\,\forall\,A,B\in\mc{F},\,A\subseteq B\), and \n(iii) \(\cl(\cl(A))=\cl(A),\,\forall\,A\in\mc{F}\).  This is a well-studied set operator.  See for instance, Birkhoff~\cite[Chapter V, Section 1]{birkhoff1973lattice} for an introduction.} and has been studied implicitly even earlier (see for instance, Wei~\cite{wei1991}, Heijnen and Pellikaan~\cite{heijnen-pellikaan}, Keevash and Sudakov~\cite{keevash-sudakov-inclusion}, and Ben-Eliezer, Hod and Lovett~\cite{BHL}).  We will abbreviate the term `Zariski closure' by `Z-closure'.
	
	\subsubsection{Finite-degree Z-closures and Z*-closures}
	
	Let \(G\) be a uniform grid.  For any \(d\in[0,N]\) and any \(S\subseteq G\), the \tsf{degree-\(d\) Z-closure} of \(S\), denoted by \(\zcl_{G,d}(S)\), is defined to be the common zero set, in \(G\), of all polynomials that vanish on \(S\), and have degree at most \(d\).\footnote{The set of all polynomials that vanish on \(S\) and have degree at most \(d\) is a vector space over \(\mb{R}\).  Note that here we include the zero polynomial in the set.  To facilitate this, we adopt the convention that the degree of the zero polynomial is \(-\infty\).}  In the case \(G=\{0,1\}^n\), we will instead use the notation \(\zcl_{n,d}(S)\).
	
	The finite-degree Z-closures are relevant to us in the context of our polynomial covering problems.  We are interested in polynomial covering problems that impose conditions on weight-determined sets, and thus, we are interested in finite-degree Z-closures of weight-determined sets.  Intriguingly, the finite-degree Z-closure of a weight-determined set need not be weight-determined.
	
	For instance, consider the \SUt\,grid \(G=[0,2]^3\).  In this case, we have \(N=6\) and \(T_{6,3}=[0,2]\cup[4,6]\).  We have \(\ul{3}=\{(2,1,0),(1,2,0),(0,2,1),(0,1,2),(2,0,1),(1,0,2),(1,1,1)\}\).  Consider
	\[
	P(X_1,X_2,X_3)=X_1X_2(X_1-X_2)+X_2X_3(X_2-X_3)-X_1X_3(X_1-X_3).
	\]
	Clearly \(\deg P=3\).  It is easy to check that \(P|_{\ul{T_{6,3}}}=0\).  Further, we get
	\[
	P(2,1,0)=P(0,2,1)=P(1,0,2)=2\quad\tx{and}\quad P(1,2,0)=P(0,1,2)=P(2,0,1)=-2.
	\]
	So \(a\not\in\zcl_{G,3}(\ul{T_{6,3}})\), for all \(a\in\ul{3},\,a\ne(1,1,1)\).  Further, consider any \(Q(X_1,X_2,X_3)\in\mb{R}[X_1,X_2,X_3]\) such that \(Q|_{\ul{T_{6,3}}}=0\) and \(\deg Q\le 3\).  Let
	\[
	R(X_1,X_2,X_3)=Q(X_1,X_2,X_3)(X_1^2+X_2^2+X_3^2-5).
	\]
	So \(\deg R=5\).  Then we have \(R|_{\ul{T_{6,3}}}=0\), and \(R(a)=0\), for all \(a\in\ul{3},\,a\ne(1,1,1)\).  Thus \(R(x)=0\), for all \(x\in G,\,x\ne(1,1,1)\).  If \(R(1,1,1)\ne0\), then by Theorem~\ref{thm:alon-furedi-basic}, we have \(\deg R\ge 6\), which is not true.  So \(R(1,1,1)=0\), which implies \(Q(1,1,1)=0\).  Thus \((1,1,1)\in\zcl_{G,3}(\ul{T_{6,3}})\).  Hence we have \(\ul{3}\cap\zcl_{G,3}(\ul{T_{6,3}})\ne\emptyset\) but \(\ul{3}\not\subseteq\zcl_{G,3}(\ul{T_{6,3}})\), which implies \(\zcl_{G,3}(\ul{T_{6,3}})\) is not weight-determined.
	
	We will circumvent this issue by introducing a new closure operator, defined exclusively for weight-determined sets.  Let \(G\) be a uniform grid.  For any \(d\in[0,N]\) and \(E\subseteq[0,N]\), we define the \tsf{degree-\(d\) Z*-closure} of \(\ul{E}\), denoted by \(\zscl_{G,d}(\ul{E})\), to be the maximal weight-determined set contained in \(\zcl_{G,d}(\ul{E})\).  In other words, \(\zcl_{G,d}(\ul{E})\) is defined by the implications:
	\begin{align*}
		\ul{j}\subseteq\zcl_{G,d}(\ul{E})&\implies\ul{j}\subseteq\zscl_{G,d}(\ul{E}),\\
		\tx{and}\quad\ul{j}\not\subseteq\zcl_{G,d}(\ul{E})&\implies\ul{j}\cap\zscl_{G,d}(\ul{E})=\emptyset.
	\end{align*}
	It follows easily that \(\zscl_{G,d}\) is a closure operator\footnote{\(\zscl_{G,d}\) is a closure operator on the family of all weight-determined sets of \(G\).}; we will prove this in Section~\ref{sec:prelims}.
	
	\vspace{-0.5cm}
	\paragraph*{Notation.}  By definition, it is clear that the finite-degree Z*-closure is a weight-determined set.  So, whenever convenient, we will use our identification of weight-determined sets with subsets of \([0,N]\) while describing these closures.  For \(E\subseteq[0,N]\), the notation \(\zscl_{G,d}(\ul{E})\) will denote the Z*-closure as a subset of \(G\), and the notation \(\zscl_{G,d}(E)\) will denote the Z*-closure as a subset of \([0,N]\).  Similar `double notations' would also apply to Z-closures of symmetric sets of the Boolean cube.
	
	The relevance of the finite-degree Z*-closures to our polynomial covering problems is captured by the following simple lemma, which is quite immediate from the definitions.
	\begin{lemma}\label{lem:poly-cover-defn}
		Let \(G\) be a uniform grid.  For any \(E\subsetneq[0,N]\),
		\begin{enumerate}[(a)]
			\item  \(\PC_G(E)=\min\{d\in[0,N]:\zscl_{G,d}(E)\ne[0,N]\}\).
			\item  \(\PPC_G(E)=\min\{d\in[0,N]:\zscl_{G,d}(E)=E\}\).
		\end{enumerate}
	\end{lemma}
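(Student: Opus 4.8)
The plan is to unwind the definitions of $\PC_G,\PPC_G,\zcl_{G,d},\zscl_{G,d}$ until both assertions become statements about polynomials of degree at most $d$ on $G$, and then, for the ``only if'' direction of (b), to merge a whole family of such polynomials into a single one via a generic linear combination. As a preliminary I would record that $\zcl_{G,N}(S)=S$ for every $S\subseteq G$: the restrictions to $G$ of the polynomials of degree at most $N=\sum_{i\in[n]}(k_i-1)$ already span every function $G\to\mb{R}$ (the indicator of a point of $G$ is a product of $n$ univariate Lagrange interpolation polynomials, of total degree $N$), so for any $a\in G\setminus S$ the indicator of $a$ witnesses $a\notin\zcl_{G,N}(S)$. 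In particular $\zscl_{G,N}(\ul{E})=\ul{E}$, i.e.\ $\zscl_{G,N}(E)=E\ne[0,N]$; hence the sets over which the two minima in the lemma are taken both contain $d=N$, so they are nonempty, and it suffices to prove, for each $d\in[0,N]$, that $\PC_G(E)\le d\iff\zscl_{G,d}(E)\ne[0,N]$ and that $\PPC_G(E)\le d\iff\zscl_{G,d}(E)=E$.

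For (a): by definition $\PC_G(E)\le d$ iff there is a polynomial $P$ of degree at most $d$ with $\ul{E}\subseteq\mc{Z}_G(P)\ne G$. Such a $P$ forces $\zcl_{G,d}(\ul{E})\subseteq\mc{Z}_G(P)\ne G$; conversely, if $\zcl_{G,d}(\ul{E})\ne G$ then, since $\zcl_{G,d}(\ul{E})$ is by definition the intersection of the sets $\mc{Z}_G(P)$ over all $P$ of degree at most $d$ vanishing on $\ul{E}$, at least one such $P$ has $\mc{Z}_G(P)\ne G$. So $\PC_G(E)\le d\iff\zcl_{G,d}(\ul{E})\ne G$. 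Finally $G$ itself is weight-determined, so $\zcl_{G,d}(\ul{E})=G$ iff $\zscl_{G,d}(\ul{E})=G$, i.e.\ iff $\zscl_{G,d}(E)=[0,N]$; this yields (a).

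For (b): since $\ul{E}\subseteq\zcl_{G,d}(\ul{E})$ and $\ul{E}$ is weight-determined, we always have $E\subseteq\zscl_{G,d}(E)$, so $\zscl_{G,d}(E)=E$ iff for every $j\in[0,N]\setminus E$ one has $\ul{j}\not\subseteq\zcl_{G,d}(\ul{E})$ --- equivalently, iff for every such $j$ there exist a polynomial $P_j$ of degree at most $d$ with $\ul{E}\subseteq\mc{Z}_G(P_j)$ and a point $a_j\in\ul{j}$ with $P_j(a_j)\ne0$. On the other hand, $\PPC_G(E)\le d$ iff a \emph{single} polynomial $P$ of degree at most $d$ vanishes on $\ul{E}$ and satisfies $\ul{j}\not\subseteq\mc{Z}_G(P)$ for all $j\in[0,N]\setminus E$ at once. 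The implication $\PPC_G(E)\le d\Rightarrow\zscl_{G,d}(E)=E$ is immediate (take $P_j=P$ for every $j$). For the converse --- the only part of the argument with genuine content --- I would take $P=\sum_{j\in[0,N]\setminus E}\lambda_jP_j$ with $\lambda\in\mb{R}^{[0,N]\setminus E}$ still to be chosen: for every $\lambda$, $P$ vanishes on $\ul{E}$ and has degree at most $d$, while for each fixed $j$ the value $P(a_j)$ is a linear form in $\lambda$ whose $\lambda_j$-coefficient equals $P_j(a_j)\ne0$, so it vanishes only on a proper subspace of $\mb{R}^{[0,N]\setminus E}$. Since $\mb{R}$ is infinite, a finite-dimensional $\mb{R}$-vector space is not a union of finitely many proper subspaces, so some $\lambda$ makes $P(a_j)\ne0$ for all $j\in[0,N]\setminus E$; this $P$ is a proper polynomial cover of degree at most $d$, hence $\PPC_G(E)\le d$. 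Combining with the first paragraph completes (b). The main obstacle is exactly this last merging step; the rest is bookkeeping with the definitions, and this is also the only place the infinitude of the base field is used.
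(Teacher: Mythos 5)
Your proof is correct and takes essentially the same route as the paper: unwind the definitions, note that nontriviality of a polynomial cover is equivalent to $\zcl_{G,d}(\ul{E})\ne G$ (hence $\zscl_{G,d}(E)\ne[0,N]$), and for the proper cover direction merge the layer-by-layer witnesses $P_j$ into a single $P=\sum_j\beta_jP_j$ by a generic choice of coefficients. You spell out the merging step more carefully than the paper does (which just asserts ``we can choose $\beta_j$'' and even drops the $\beta_j$ in the displayed formula), using the standard fact that an $\mb{R}$-vector space is not a finite union of proper subspaces.
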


	\subsubsection{Combinatorial characterization of finite-degree Z*-closures}
	
	We will proceed to give a combinatorial characterization of finite-degree Z*-closures.  We need a couple of set operators to proceed.
	
	\paragraph{Two set operators.}  Fix \(M\in\mb{Z}^+\) and for every \(d\in[0,M]\), define a set operator \(L_{M,d}:2^{[0,M]}\to2^{[0,M]}\) as follows.  Let \(E=\{t_1<\cdots<t_s\}\subseteq[0,M]\).  Define
	\[
	L_{M,d}(E)=\begin{cases}
		E&\tx{if }|E|\le d,\\
		[0,t_{s-d}]\cup E\cup[t_{d+1},M]&\tx{if }|E|\ge d+1.
	\end{cases}
	\]
	We are interested in iterated applications of the operator \(L_{M,d}\), and in an obvious way, for any \(k\in\mb{N}\), we define the operator \(L_{M,d}^{k+1}=L_{M,d}\circ L_{M,d}^k\), where \(L_{M,d}^0\) denotes the identity operator.  Clearly, for every \(d\in[0,M]\) and \(E\subseteq[0,M]\), we have the chain
	\[
	E=L_{M,d}^0(E)\subseteq L_{M,d}(E)\subseteq L_{M,d}^2(E)\subseteq\cdots
	\]
	So for every \(d\in[0,M]\), define the set operator \(\ol{L}_{M,d}:2^{[0,M]}\to2^{[0,M]}\) as
	\[
	\ol{L}_{M,d}(E)=\bigcup_{k\ge0}L_{M,d}^k(E),\quad\tx{for all }E\subseteq[0,M].
	\]
	
	We are now ready to state our main theorem: our combinatorial characterization of finite-degree Z*-closures of weight-determined sets of an \SUt\,grid.
	\begin{theorem}\label{thm:main-poly-closure-op}
		Let \(G\) be an \SUt\,grid.  For every \(d\in[0,N]\) and \(E\subseteq[0,N]\),
		\[
		\zscl_{G,d}(E)=\ol{L}_{N,d}(E).
		\]
	\end{theorem}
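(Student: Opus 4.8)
The plan is to prove the two inclusions $\overline L_{N,d}(E)\subseteq\zscl_{G,d}(E)$ and $\zscl_{G,d}(E)\subseteq\overline L_{N,d}(E)$ separately. For the first, it suffices to establish the one-step bound $L_{N,d}(E)\subseteq\zscl_{G,d}(E)$ for \emph{every} $E\subseteq[0,N]$ — equivalently, that every $P\in\mb R[\mb X]$ with $\deg P\le d$ and $P|_{\ul E}=0$ vanishes on $\ul{L_{N,d}(E)}$. Granting this and using that $\zscl_{G,d}$ is a closure operator (Section~\ref{sec:prelims}) and that $L_{N,d}$ is extensive and monotone, the one-step bound applied to $\zscl_{G,d}(E)$ gives $L_{N,d}(\zscl_{G,d}(E))\subseteq\zscl_{G,d}(\zscl_{G,d}(E))=\zscl_{G,d}(E)$, so $\zscl_{G,d}(E)$ is an $L_{N,d}$-fixed set containing $E$, hence contains the least such set, which is $\overline L_{N,d}(E)$. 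For the second inclusion it is enough to show that for every $j\notin F:=\overline L_{N,d}(E)$ there is a $P\in\mb R[\mb X]$ with $\deg P\le d$, $P|_{\ul E}=0$, and $P$ not identically zero on $\ul j$ (so $\ul j\not\subseteq\zcl_{G,d}(\ul E)$, i.e.\ $j\notin\zscl_{G,d}(E)$). Since $F$ is $L_{N,d}$-fixed, the first inclusion shows that a degree-$\le d$ polynomial vanishes on $\ul E$ iff it vanishes on $\ul F$, so we may instead build $P$ vanishing on $\ul F$.

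For the one-step bound the case $|E|\le d$ is trivial, so write $E=\{t_1<\cdots<t_s\}$ with $s\ge d+1$; we must show $P$ vanishes on $\ul j$ for $j\in[0,t_{s-d}]\cup[t_{d+1},N]$. The involution $\iota\colon x\mapsto(k_1-1-x_1,\ldots,k_n-1-x_n)$ of $G$ sends $\ul w$ to $\ul{N-w}$ and preserves degrees, so replacing $(P,E)$ by $(P\circ\iota,\,N-E)$ interchanges the two intervals; it therefore suffices to treat $j\ge t_{d+1}$. I would do this by downward induction on $j$ from $N$ to $t_{d+1}$, the levels $j\in E$ being given. In the step, with $j\notin E$ and $P$ known to vanish on $\ul i$ for all $i\in E\cup[j+1,N]$, put $Q:=P\cdot\prod_{i\in[0,N]\setminus(E\cup[j,N])}(\wgt(\mb X)-i)$. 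Then $Q$ vanishes on $G$ outside $\ul j$ (the product kills precisely the weights not already killed by $P$, except $j$), and on $\ul j$ it equals $c\,P|_{\ul j}$ with $c=\prod(j-i)\ne0$; since $j>t_{d+1}$ one has $|E\cap[0,j-1]|\ge d+1$, whence $\deg Q\le d+j-|E\cap[0,j-1]|\le j-1$. If $Q(b)\ne0$ for some $b\in\ul j$, let $R_b:=\prod_{l=1}^n\prod_{c=b_l+1}^{k_l-1}(X_l-c)$; this has degree $N-j$, is nonzero at $b$, and vanishes at every other point of $\ul j$ (any $b'\in\ul j$ distinct from $b$ has some coordinate $b'_l>b_l$). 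Then $QR_b$ vanishes on $G\setminus\{b\}$ and is nonzero at $b$, so $\deg(QR_b)\ge N$ by Theorem~\ref{thm:alon-furedi-basic}; but $\deg(QR_b)\le(j-1)+(N-j)=N-1$, a contradiction, so $P|_{\ul j}=0$. (The base case $j=N$ is the same computation with $R_b=1$.) Note this argument works over an arbitrary uniform grid.

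For the second inclusion I would first identify the $L_{N,d}$-fixed sets $F\ne[0,N]$: writing $F=\{f_1<\cdots<f_m\}$, the defining equalities $[0,f_{m-d}]\subseteq F$ and $[f_{d+1},N]\subseteq F$ force either $|F|\le d$, or $F=T_{N,i}\cup M$ for some $i\in[1,d]$ and some $M\subseteq[i,N-i]$ with $|M|=d-i$ and $M\ne[i,N-i]$. If $|F|\le d$, then $P:=\prod_{l\in F}(\wgt(\mb X)-l)$ has degree $\le d$, vanishes on $\ul F$, and is a nonzero constant on $\ul j$ for each $j\notin F$. If $F=T_{N,i}\cup M$ and $j\in[i,N-i]\setminus M$, then $P:=P_j\cdot\prod_{l\in M}(\wgt(\mb X)-l)$ does the job as soon as $P_j$ has degree $\le i$, vanishes on $\ul{T_{N,i}}$, and is not identically zero on $\ul j$: indeed $\deg P\le i+(d-i)=d$, $P$ vanishes on $\ul{T_{N,i}}\cup\ul M=\ul F$, and $P$ is nonzero at the point where $P_j$ is, since $j\notin M$. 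So the whole second inclusion reduces to the \emph{separation claim}: for an \SUt\,grid $G$, every $i\ge1$ with $T_{N,i}\ne[0,N]$, and every $j\in[i,N-i]$, there is a polynomial of degree $\le i$ vanishing on $\ul{T_{N,i}}$ but not identically on $\ul j$; equivalently $\zscl_{G,i}(T_{N,i})=T_{N,i}$.

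I expect the separation claim to be the main obstacle, and it is the one place where strict unimodality is indispensable: for grids like $[0,1]\times[0,3]$, where $\sqbinom G1=\sqbinom G2$, the only degree-$\le2$ polynomial vanishing on $\ul{T_{4,2}}$ is $0$ and the claim fails. Moreover no polynomial in $\wgt(\mb X)$ alone can witness it — such a polynomial of degree $\le i$ with $2i$ prescribed roots vanishes identically — so one is forced to use genuinely non-weight-function polynomials. The plan is to prove the claim by a dimension (rank) count on the space of degree-$\le i$ polynomials: one shows that the restriction map to functions on $\ul{T_{N,i}}\cup\ul j$ has strictly larger rank than the restriction map to functions on $\ul{T_{N,i}}$, so that the $\ul{T_{N,i}}$-conditions do not already force vanishing on $\ul j$; here the strict inequalities $\sqbinom G0<\sqbinom G1<\cdots<\sqbinom G{\lfloor N/2\rfloor}$ supply exactly the slack needed to keep the relevant polynomial space large enough for each interior weight $j$. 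With the separation claim in hand, the reductions above close the proof.
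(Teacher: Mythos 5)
Your overall architecture is sound and parallels the paper's, but there is one genuine gap. The first inclusion $\ol L_{N,d}(E)\subseteq\zscl_{G,d}(E)$ is fine: your one-step bound is equivalent to the paper's Closure Builder Lemma~\ref{lem:poly-clo-builder}, and your proof of it (downward induction, the auxiliary $Q$, the monomial factor $R_b$, and Alon--F\"uredi) is correct over any uniform grid; the degree count $\deg Q\le j-1$ checks out because $j>t_{d+1}$ forces $|E\cap[0,j-1]|\ge d+1$. Your reduction of the reverse inclusion to a statement about $L_{N,d}$-fixed sets, and the explicit classification of such $F\ne[0,N]$ as either $|F|\le d$ or $F=T_{N,i}\cup M$ with $i=|F|-d\in[1,d]$, $M\subseteq[i,N-i]$, $|M|=d-i$, is a clean alternative to the paper's use of $i_0=\max\{i:T_{N,i}\subseteq\ol L_{N,d}(E)\}$; the two routes reach the same bottleneck.

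That bottleneck is your ``separation claim'' $\zscl_{G,i}(T_{N,i})=T_{N,i}$ (for $i\ge 1$ with $T_{N,i}\ne[0,N]$), which you correctly single out as the only place strict unimodality enters, but which you do not prove: ``the plan is to prove the claim by a dimension (rank) count'' is a direction, not an argument. Two issues would need to be resolved to turn the sketch into a proof. First, a raw count of points versus low-degree monomials is insufficient, because one must pass from a formally nonzero polynomial of degree at most $i$ to a function on $G$ that is actually nonzero somewhere — in the paper's proof of Lemma~\ref{lem:T-clo-poly} this is exactly where the Combinatorial Nullstellensatz (Theorem~\ref{thm:CN}) is invoked, after restricting to the monomial basis $\{\mb X^\gamma:\gamma\in\ul{[0,i]}\}$ so that CN applies. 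Second, the dimension count as you state it (comparing ranks of restriction maps to $\ul{T_{N,i}}$ versus $\ul{T_{N,i}}\cup\ul j$) must be made to work for every interior $j\in[i,N-i]$, not just $j=i,N-i$; the paper sidesteps this by first shrinking $T_{N,i}$ to $T'_{N,i}=[0,i-1]\cup\{N-i+1\}$ via the Closure Builder Lemma, doing the count only for $j=i$ (and $j=N-i$ by symmetry) using the strict inequality $\sqbinom{G}{i}>\sqbinom{G}{i-1}$, and then deriving the conclusion for interior $j$ from the Closure Builder Lemma by contradiction. Until the separation claim is actually carried out along these (or equivalent) lines, the second inclusion — and hence the theorem — is not proved.
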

	In addition, we can characterize when the finite-degree Z*-closure of a weight-determined set \(E\) is equal to \(E\) or \([0,N]\).
	\begin{proposition}\label{prop:poly-L-finer}
		Let \(G\) be an \SUt\,grid.  For every \(d\in[0,N]\) and \(E\subseteq[0,N]\),
		\begin{enumerate}[(a)]
			\item  \(\ol{L}_{N,d}(E)\ne[0,N]\) if and only if \(E\) is \(d\)-admitting.
			\item  \(\ol{L}_{N,d}(E)=E\) if and only if \(T_{N,|E|-d}\subseteq E\).
		\end{enumerate}
	\end{proposition}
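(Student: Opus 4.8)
The plan is to work purely with the combinatorial operator \(L_{N,d}\) and pin down the structure of its fixed points. I would begin with two trivial observations: \(A\subseteq L_{N,d}(A)\) for every \(A\subseteq[0,N]\) (immediate from the definition in both branches of the case split), so the stated chain \(E\subseteq L_{N,d}(E)\subseteq L_{N,d}^2(E)\subseteq\cdots\) is increasing; and since \(2^{[0,N]}\) is finite it stabilizes, so \(\ol{L}_{N,d}(E)=L_{N,d}^k(E)\) for all large \(k\) and is therefore a fixed point of \(L_{N,d}\) containing \(E\). Thus both parts reduce to describing the fixed points of \(L_{N,d}\) and understanding how \(L_{N,d}\) moves an arbitrary set toward one.

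The crux is a lemma describing fixed points: a set \(F\) is fixed by \(L_{N,d}\) iff either \(|F|\le d\), or \(|F|>d\) and \(T_{N,i}\subseteq F\) where \(i:=|F|-d\); in the latter case this forces \(i\le d\) and \(|F\setminus T_{N,i}|=2d-|F|=d-i\), and (when \(F\ne[0,N]\)) \(i\le\lfloor N/2\rfloor\), with \(F\setminus T_{N,i}\) an arbitrary subset of \([i,N-i]\) of that size. The ``only if'' direction is the substantive one: if \(F=\{t_1<\cdots<t_s\}\) is fixed with \(s\ge d+1\), then \([0,t_{s-d}]\subseteq F\); counting the elements of \(F\) that are \(\le t_{s-d}\) gives \(t_{s-d}\le s-d-1\), while the \((s-d)\)-th smallest element of any set of distinct nonnegative integers is \(\ge s-d-1\), so \(t_{s-d}=s-d-1\) and hence \([0,s-d-1]\subseteq F\); by the symmetry \(w\mapsto N-w\) (which commutes with \(L_{N,d}\)) we also get \([N-s+d+1,N]\subseteq F\). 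The ``if'' direction is a direct check: when \(|F|>d\) the hypothesis forces \(|F|-d\le i\), so the lowest \(|F|-d\) elements of \(F\) lie inside \([0,i-1]\) and the highest \(|F|-d\) inside \([N-i+1,N]\), whence \(L_{N,d}(F)=F\).

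Part (b) is then short. If \(|E|\le d\), then \(L_{N,d}(E)=E\), so \(\ol{L}_{N,d}(E)=E\), and \(T_{N,|E|-d}=\emptyset\) (reading \(T_{N,j}=\emptyset\) for \(j\le0\)), so both sides hold. If \(|E|>d\) and \(T_{N,|E|-d}\subseteq E\), then with \(i=|E|-d\) the \(i\)-th smallest and \(i\)-th largest elements of \(E\) are \(i-1\) and \(N-i+1\), so \(L_{N,d}(E)=[0,i-1]\cup E\cup[N-i+1,N]=E\), hence \(\ol{L}_{N,d}(E)=E\). Conversely \(\ol{L}_{N,d}(E)=E\) forces \(E\subseteq L_{N,d}(E)\subseteq\ol{L}_{N,d}(E)=E\), so \(E\) is itself a fixed point, and the lemma gives \(T_{N,|E|-d}\subseteq E\).

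For part (a), the \(\Rightarrow\) direction is immediate: \(F:=\ol{L}_{N,d}(E)\) is a fixed point \(\ne[0,N]\) with \(E\subseteq F\), and the lemma yields \(i\in[0,d]\) with \(T_{N,i}\subseteq F\ne[0,N]\) and \(|E\setminus T_{N,i}|\le|F\setminus T_{N,i}|\le d-i\), i.e. \(E\) is \((d,i)\)-admitting. For \(\Leftarrow\), given that \(E\) is \((d,i)\)-admitting I would take \(F:=E\cup T_{N,i}\); since \(|E\setminus T_{N,i}|\le d-i\) this \(F\) has exactly the shape of the lemma, so it is a fixed point, and \(F\ne[0,N]\) because \(E\cup T_{N,i}\ne[0,N]\). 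It remains to show \(\ol{L}_{N,d}(E)\subseteq F\); rather than invoking monotonicity of \(L_{N,d}\) (which I would sidestep), I would prove \(L_{N,d}^k(E)\subseteq F\) by induction on \(k\), the step being the claim \(A\subseteq F\implies L_{N,d}(A)\subseteq F\). This claim is the part I expect to take the most care: if \(|A|\le d\) it is trivial, and otherwise, since at most \(|F\setminus T_{N,i}|+i\le d\) elements of \(A\) lie outside \([0,i-1]\) and those all exceed \(i-1\), at least \(|A|-d\) of the smallest elements of \(A\) lie in \([0,i-1]\), so the low fill \([0,a_{|A|-d}]\) of \(L_{N,d}(A)\) stays inside \([0,i-1]\subseteq F\); symmetrically for the high fill, giving \(L_{N,d}(A)\subseteq F\). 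Throughout, the degenerate cases (\(i=0\), small \(E\) or \(F\), \(d\in\{0,N\}\), and \(E=[0,N]\)) would be dispatched directly, using the convention \(T_{N,j}=\emptyset\) for \(j\le0\).
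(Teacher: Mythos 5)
Your argument is correct, and part (b) is proved essentially the same way the paper does it: reduce to fixed points of $L_{N,d}$ and count to pin the $(s-d)$-th and $(d+1)$-th elements. Where you depart from the paper is in part (a), specifically the $\Leftarrow$ direction. The paper proves a separate transfer lemma (its Lemma~\ref{lem:admitting-lemma}: $E$ is $(d,i)$-admitting iff $L_{N,d}(E)$ is), iterates it to conclude $\ol{L}_{N,d}(E)$ is $d$-admitting, and then reads off $\ol{L}_{N,d}(E)\ne[0,N]$. You instead package the fixed-point description (your lemma) once and for all, use it to manufacture the explicit fixed point $F:=E\cup T_{N,i}$ (whose size bound $|F|\le d+i$ makes it a fixed point by the ``if'' direction of your lemma), and then show by a one-step absorption claim $A\subseteq F\Rightarrow L_{N,d}(A)\subseteq F$ that $\ol{L}_{N,d}(E)\subseteq F\ne[0,N]$. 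This is logically the same content as the paper's Lemma~\ref{lem:admitting-lemma} (the paper's proof of that lemma already exhibits $L_{N,d}(E)\subseteq E\cup T_{N,i}$), but your packaging is arguably cleaner and makes the role of $E\cup T_{N,i}$ as a closed superset explicit; it also makes it transparent that part (a) $\Leftarrow$ is simply monotonicity of $L_{N,d}$ plus the existence of a small fixed point above $E$. Your $\Rightarrow$ direction of (a) is also organized slightly differently (pulling $i$ from the fixed-point lemma applied to $F=\ol{L}_{N,d}(E)$ rather than defining $i_0$ as a maximum), but the content matches.

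One small imprecision worth fixing in the lemma statement: the conclusions $i\le d$ and $|F\setminus T_{N,i}|=2d-|F|=d-i$ rely on $|T_{N,i}|=2i$, which only holds when $i\le\lfloor N/2\rfloor$, i.e.\ when $F\ne[0,N]$; as written, you attach the $F\ne[0,N]$ caveat only to the third clause. In all the places you invoke the lemma this hypothesis is in force, so nothing breaks, but the statement of the lemma should put the caveat in front of all three derived consequences (or simply state the lemma only for $F\ne[0,N]$, handling $F=[0,N]$ separately as a trivial fixed point).
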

	
	This would complete the proof of Theorem~\ref{thm:main-poly-cover}, thus completing our solution to Problem~\ref{prob:main} (b) for \SUt\,grids.

	\subsection{Finite-degree h-closures and h*-closures, and hyperplane covering problems}\label{subsec:hyp-clo}
	
	To better understand the difference between the hyperplane and polynomial covering problems, we introduce another new closure operator, which we call the \emph{finite-degree {\normalfont h}-closure}, defined using polynomials representing hyperplane covers.  Let \(\ms{H}_n\) be the set of all polynomials in \(\mb{R}[\mb{X}]\coloneqq\mb{R}[X_1,\ldots,X_n]\) which are products of polynomials of degree at most 1.  Let \(G\) be a uniform grid.  For any \(d\in[0,N]\) and any \(S\subseteq G\), we define the \tsf{degree-\(d\) h-closure} of \(S\), denoted by \(\hcl_{G,d}(S)\), to be the common zero set, in \(G\), of all polynomials in \(\ms{H}_n\) that vanish on \(S\) and have degree at most \(d\).  By definition, it is clear that \(\zcl_{G,d}(S)\subseteq\hcl_{G,d}(S)\).  In the case \(G=\{0,1\}^n\), we will instead use the notation \(\hcl_{n,d}(S)\).
	
	Note that we do not know if the finite-degree hyperplane closure of every weight-determined set of \(G\) is weight-determined.  So, akin to the definition of finite-degree Z*-closures, for any \(d\in[0,N]\) and \(E\subseteq[0,N]\), we define the \tsf{degree-\(d\) h*-closure} of \(\ul{E}\), denoted by \(\hscl_{G,d}(\ul{E})\), to be the maximal weight-determined set contained in \(\hcl_{G,d}(\ul{E})\).  These closures are relevant to us in the context of our hyperplane covering problems due to the following observation, which is immediate from the definitions.
	\begin{observation}\label{obs:hyp-cover-defn}
		Let \(G\) be a uniform grid.  For any \(E\subsetneq[0,N]\),
		\begin{enumerate}[(a)]
			\item  \(\HC_G(E)=\min\{d\in[0,N]:\hscl_{G,d}(\ul{E})\ne G\}\).
			\item  \(\PHC_G(E)\ge\min\{d\in[0,N]:\hscl_{G,d}(\ul{E})=\ul{E}\}\).
		\end{enumerate}
	\end{observation}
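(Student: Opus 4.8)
I would prove the two parts separately, in both cases via the standard translation between families of hyperplanes and products of affine forms.

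\emph{Part (a).} The plan is to unwind both sides into the same minimisation over $\ms{H}_n$. Set up the dictionary: a family $\mc{H}$ of $d$ hyperplanes, written $H=\mc{Z}(\ell_H)$ with $\ell_H$ affine, corresponds to $P_{\mc{H}}\coloneqq\prod_{H\in\mc{H}}\ell_H\in\ms{H}_n$, which satisfies $\deg P_{\mc{H}}\le d$ and $\mc{Z}_G(P_{\mc{H}})=G\cap\bigcup_{H\in\mc{H}}H$; conversely a nonzero $P\in\ms{H}_n$ of degree $m$ can, after discarding constant factors, be written $c\prod_{i=1}^{m}\ell_i$ with each $\ell_i$ non-constant affine, and the hyperplanes $\mc{Z}(\ell_i)$ form a family of size $\le m$ with the same union. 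Under this dictionary, ``$\mc{H}$ is a nontrivial hyperplane cover of $\ul{E}$'' is precisely ``$P_{\mc{H}}|_{\ul{E}}=0$ and $\mc{Z}_G(P_{\mc{H}})\ne G$''. Since $\hcl_{G,d}(\ul{E})$ is the intersection of the sets $\mc{Z}_G(P)$ over all $P\in\ms{H}_n$ with $\deg P\le d$ and $P|_{\ul{E}}=0$, it is a proper subset of $G$ iff some such $P$ already has $\mc{Z}_G(P)\ne G$, i.e.\ iff $\ul{E}$ admits a nontrivial hyperplane cover of size $\le d$; the least such $d$ is $\HC_G(E)$ on one side and $\min\{d:\hcl_{G,d}(\ul{E})\ne G\}$ on the other, giving the equality. (The degenerate case $E=\emptyset$ is handled separately by the empty family and the constant $1$, and is harmless.)

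\emph{Part (b).} Note that $\hcl_{G,d}(\ul{E})$ is antitone in $d$ (the intersection defining it only grows) and always contains $\ul{E}$, so $\{d\in[0,N]:\hcl_{G,d}(\ul{E})=\ul{E}\}$ is an up-set; thus it suffices to prove $\hcl_{G,\PHC_G(E)}(\ul{E})=\ul{E}$. Put $d\coloneqq\PHC_G(E)$, pick a proper hyperplane cover $\mc{H}$ of $\ul{E}$ with $|\mc{H}|=d$, and form $P=P_{\mc{H}}\in\ms{H}_n$ as in (a), so $\deg P\le d$ and $P|_{\ul{E}}=0$. Then
\[
\ul{E}\ \subseteq\ \hcl_{G,d}(\ul{E})\ \subseteq\ \mc{Z}_G(P)\ =\ G\cap\bigcup_{H\in\mc{H}}H,
\]
the left inclusion because every defining polynomial of $\hcl_{G,d}$ vanishes on $\ul{E}$. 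Properness of $\mc{H}$ gives $\ul{j}\not\subseteq\mc{Z}_G(P)$ for all $j\in[0,N]\setminus E$, so $\ul{E}$ is the largest weight-determined set inside $\mc{Z}_G(P)$; fed into the sandwich, together with the fact that $\hcl_{G,d}(\ul{E})$ is itself weight-determined, this forces $\hcl_{G,d}(\ul{E})=\ul{E}$.

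The hard part, I expect, is that last invoked fact: that the degree-$d$ h-closure of a weight-determined set is again weight-determined. This is exactly where $\hcl$ can differ from $\zcl$ — the paper's own $[0,2]^3$ computation shows that $\zcl_{G,3}(\ul{T_{6,3}})$ need not be weight-determined, and an analogue for $\hcl$ would break the phrasing above. In the Boolean-cube setting there is no problem, because there $\hcl_{n,d}$ coincides with $\zcl_{n,d}$ on symmetric sets and the Z-closure of a symmetric set is symmetric, so $\hcl_{n,d}(\ul{E})$ is weight-determined and the argument closes verbatim. For a general uniform grid the robust fix is to re-run the Part (b) sandwich with $\hscl_{G,d}(\ul{E})$ — the maximal weight-determined subset of $\hcl_{G,d}(\ul{E})$ — in place of $\hcl_{G,d}(\ul{E})$: being weight-determined by construction and still trapped between $\ul{E}$ and $\mc{Z}_G(P)$, it must equal $\ul{E}$, which is all the inequality requires. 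Beyond this point, the proof is routine bookkeeping with the $\mc{H}\leftrightarrow P_{\mc{H}}$ correspondence.
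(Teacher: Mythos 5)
Your Part~(a) is correct and is the intended argument: the $\mc{H}\leftrightarrow P_{\mc{H}}$ dictionary between hyperplane families and elements of $\ms{H}_n$ directly turns ``nontrivial hyperplane cover of size $\le d$ exists'' into ``$\hcl_{G,d}(\ul{E})\ne G$''. The paper gives no proof (it declares the observation ``immediate from the definitions''), and your unwinding is exactly the bookkeeping that phrase is hiding.

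Your analysis of Part~(b) is the more valuable contribution, because you have correctly spotted a genuine subtlety that the paper glosses over. The sandwich $\ul{E}\subseteq\hcl_{G,d}(\ul{E})\subseteq\mc{Z}_G(P_{\mc{H}})$ is right, and properness does show that $\ul{E}$ is the maximal weight-determined subset of $\mc{Z}_G(P_{\mc{H}})$; but to conclude $\hcl_{G,d}(\ul{E})=\ul{E}$ you need $\hcl_{G,d}(\ul{E})$ itself to be weight-determined. For the Boolean cube this holds because the set $\{Q\in\ms{H}_n:\deg Q\le d,\ Q|_{\ul{E}}=0\}$ is $S_n$-invariant when $\ul{E}$ is symmetric, so its common zero set is symmetric; this is precisely the fact the paper asserts at the start of Section~4, and it is what makes the observation genuinely ``immediate'' there. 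For a general uniform grid the paper itself lists ``is $\hcl_{G,d}$ of a weight-determined set weight-determined?'' as part of Open Problem~1.12(a), so your flag is well placed. Two small cautions about your proposed repair: first, replacing $\hcl$ by the maximal weight-determined subset $\hscl_{G,d}(\ul{E})$ does make the sandwich close, but note it proves the nominally \emph{weaker} inequality $\PHC_G(E)\ge\min\{d:\hscl_{G,d}(\ul{E})=\ul{E}\}$, since $\min\{d:\hscl_{G,d}(\ul{E})=\ul{E}\}\le\min\{d:\hcl_{G,d}(\ul{E})=\ul{E}\}$; it does not rescue the observation as literally stated for general $G$, though it recovers everything the paper actually uses, since the paper invokes Part~(b) only in the Boolean-cube setting where $\hscl=\hcl$ on symmetric sets. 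Second, your parenthetical ``the intersection defining it only grows'' is backwards as written -- the \emph{family} of polynomials grows with $d$, so the common zero set shrinks -- but your stated conclusion (antitonicity, so $\{d:\hcl_{G,d}(\ul{E})=\ul{E}\}$ is an up-set) is correct.
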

	
	\subsubsection{The Boolean cube setting:  characterizing h-closures}
	
	We will characterize the finite-degree h-closures of all symmetric sets of the Boolean cube for all degrees; in fact, we make the intriguing observation that these coincide with the finite-degree Z-closures.
	
	\vspace{-0.5cm}
	\paragraph*{Notation.}  It is easy to see that the finite-degree h-closure of a symmetric set of a Boolean cube is symmetric.  So \(\hcl_{n,d}(\ul{E})=\hscl_{n,d}(\ul{E})\), for all \(E\subseteq[0,n],\,d\in[0,n]\).  So once again, we will use the identification between symmetric sets of \(\{0,1\}^n\) and subsets of \([0,n]\).  For \(E\subseteq[0,n]\), the notation \(\hcl_{n,d}(\ul{E})\) will denote the h-closure as a subset of \(\{0,1\}^n\), and the notation \(\hcl_{n,d}(E)\) will denote the h-closure as a subset of \([0,n]\).  We will also follow the same convention if we use the notation of h*-closures.
	
	We already have Theorem~\ref{thm:main-poly-closure-op} that characterizes the finite-degree Z*-closures. Further, since the finite-degree Z-closure of a symmetric set is symmetric, we have \(\zcl_{n,d}(\ul{E})=\zscl_{n,d}(\ul{E})\), for all \(E\subseteq[0,n],\,d\in[0,n]\).  With an additional observation, we will conclude the following.
	\begin{theorem}\label{thm:hcl=zcl}
		For every \(d\in[0,n]\) and \(E\subseteq[0,n]\),
		\[
		\hcl_{n,d}(E)=\hscl_{n,d}(E)=\zcl_{n,d}(E)=\zscl_{n,d}(E)=\ol{L}_{n,d}(E).
		\]
	\end{theorem}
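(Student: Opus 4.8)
The plan is to read off the chain of equalities from results already in hand, the single new ingredient being an explicit product of affine-linear forms. To begin, since the finite-degree Z-closure of a symmetric set is symmetric, \(\zcl_{n,d}(\ul E)\) is weight-determined and hence equals \(\zscl_{n,d}(\ul E)\), which in turn equals \(\ol L_{n,d}(E)\) by Theorem~\ref{thm:main-poly-closure-op}. Because \(\ms H_n\subseteq\mb R[\mb X]\), the inclusion \(\zcl_{n,d}(\ul E)\subseteq\hcl_{n,d}(\ul E)\) is immediate. So the whole statement reduces to the reverse inclusion \(\hcl_{n,d}(\ul E)\subseteq\ol L_{n,d}(E)\).

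First I would note that \(\hcl_{n,d}\) of a symmetric set is again symmetric — permuting the variables of a product of linear forms vanishing on \(\ul E\) produces another product of linear forms of the same degree vanishing on \(\ul E\) — so \(\hcl_{n,d}(\ul E)\) is a union of weight classes, and it suffices to show that for each \(j\in[0,n]\setminus E'\), where \(E':=\ol L_{n,d}(E)\), there is a product of at most \(d\) affine-linear forms that vanishes on \(\ul{E'}\) but not at some point of \(\ul j\); indeed \(\hcl_{n,d}(\ul E)\subseteq\hcl_{n,d}(\ul{E'})\) since \(\hcl\) is monotone and \(\ol L_{n,d}\) is a closure operator. We may assume \(E'\neq[0,n]\). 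Put \(i=\max\{0,|E'|-d\}\). When \(i\ge1\), Proposition~\ref{prop:poly-L-finer}(b) applied to the \(\ol L_{n,d}\)-fixed set \(E'\) gives \(T_{n,i}\subseteq E'\); moreover \(E'\neq[0,n]\) forces \(2i\le n\), so \(T_{n,i}=[0,i-1]\cup[n-i+1,n]\) is a disjoint union of size \(2i\) and we may write \(E'=T_{n,i}\sqcup F\) with \(F\subseteq[i,n-i]\) and \(|F|=|E'|-2i=d-i\). (When \(i=0\) take \(F=E'\), with \(|F|=|E'|\le d\).) In either case, \(j\notin E'\) forces \(j\in[i,n-i]\setminus F\).

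The key construction is
\[
P(\mb X)\;=\;\prod_{t=1}^{i}(X_{2t-1}-X_{2t})\cdot\prod_{c\in F}\bigg(\sum_{s=1}^{n}X_s-c\bigg),
\]
a product of \(i+|F|\le d\) affine-linear forms, hence a degree-\(\le d\) member of \(\ms H_n\). On a point of Hamming weight in \([0,i-1]\cup[n-i+1,n]\) the first factor vanishes: if all \(i\) pairs \((x_{2t-1},x_{2t})\) were unequal then the first \(2i\) coordinates would contribute exactly \(i\) ones, forcing the weight into \([i,n-i]\); so some pair is equal. On a point of weight in \(F\) the second factor vanishes. Thus \(P\) vanishes on \(\ul{E'}\). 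Conversely, for \(j\in[i,n-i]\setminus F\) consider the point with \(x_{2t-1}=1,\,x_{2t}=0\) for \(t\in[i]\) and exactly \(j-i\) further ones among coordinates \(2i+1,\dots,n\) (possible since \(0\le j-i\le n-2i\)): it has weight \(j\), every pair unequal so the first factor is \(\pm1\), and the second factor is \(\prod_{c\in F}(j-c)\neq0\) as \(j\notin F\), so \(P\) is nonzero there. By symmetry of \(\hcl_{n,d}(\ul{E'})\) this yields \(\ul j\cap\hcl_{n,d}(\ul{E'})=\emptyset\) for every \(j\notin E'\), i.e.\ \(\hcl_{n,d}(\ul{E'})\subseteq\ul{E'}\), which completes the chain of equalities.

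I expect the only genuine difficulty to be spotting the first factor \(\prod_{t}(X_{2t-1}-X_{2t})\): one needs a product of merely \(i\) linear forms that simultaneously annihilates all the low-weight classes \([0,i-1]\) and all the high-weight classes \([n-i+1,n]\) of \(\{0,1\}^n\) while remaining nonzero on every intermediate class — the coordinate-pairing trick does exactly this, with a one-line pigeonhole count on unequal pairs. The remaining steps are routine manipulation of the closure operators and of the combinatorial description \(\ol L_{n,d}\) already available.
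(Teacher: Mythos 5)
Your proof is correct, and it rests on the same central ingredient as the paper's argument: the explicit product $\prod_{t=1}^{i}(X_{2t-1}-X_{2t})$ of affine-linear forms that kills $\ul{T_{n,i}}$ while remaining nonzero on the middle weight classes. This is exactly the polynomial the paper uses in the proof of Lemma~\ref{lem:main-lem-hcl}(b). The surrounding structure, however, is genuinely different. The paper proves Theorem~\ref{thm:hcl=zcl} by instructing the reader to re-run the proof of Theorem~\ref{thm:main-poly-closure-op} verbatim with $\ms{H}_n$ in place of $\mb{R}[\mb{X}]$ and with Lemmas~\ref{lem:poly-clo-builder} and~\ref{lem:T-clo-poly} replaced by their $\ms{H}_n$-analogues Lemma~\ref{lem:main-lem-hcl}(a) and (b); in particular, it also re-establishes a Closure Builder Lemma for h-closures. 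You avoid that iterative argument altogether: using $\zcl_{n,d}(\ul{E})=\zscl_{n,d}(\ul{E})=\ol{L}_{n,d}(E)=:E'$ and the fixed-point characterization of Proposition~\ref{prop:poly-L-finer}(b), you reduce everything to exhibiting one member of $\ms{H}_n$ of degree at most $d$ that cuts out $\ul{E'}$ exactly, via the decomposition $E'=T_{n,i}\sqcup F$ with $i=\max\{0,|E'|-d\}$ and $|F|\le d-i$. The pairing trick handles $T_{n,i}$, the weight-selecting factor $\prod_{c\in F}\big(\sum_s X_s-c\big)$ handles $F$, and the degree budget $i+|F|\le d$ falls out immediately; together with monotonicity of $\hcl_{n,d}$ and the symmetry of h-closures of symmetric sets this closes the chain. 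What your route costs is a heavier (but already-available) reliance on Proposition~\ref{prop:poly-L-finer}(b); what it buys is a shorter, more self-contained argument that never re-does the step-by-step closure-building for products of linear forms.
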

	Further, using Observation~\ref{obs:hyp-cover-defn}, Theorem~\ref{thm:hcl=zcl} and a tight construction of hyperplane cover, we will prove Theorem~\ref{thm:main-hyp-cover}, our solution to Problem~\ref{prob:main} (a) in the Boolean cube setting.
	
	It must be noted that for larger uniform grids, for a weight-determined set, the finite-degree h-closure and Z-closure need not be equal.  For instance, let \(G=[0,2]\times[0,2]\) and consider \(T_{4,2}=[0,1]\cup[3,4]\).  Owing to the fact that affine hyperplanes in \(\mb{R}^2\) are lines, we get \(\hcl_{G,2}(\ul{T_{4,2}})=G\).  Further, let \(P(X_1,X_2)=X_1^2-X_1X_2+X_2^2-X_1-X_2\).  Then obviously \(\deg P=2\), and we can check that \(P|_{\ul{T_{4,2}}}=0\).  Note that \(G\setminus\ul{T_{4,2}}=\ul{2}=\{(2,0),(1,1),(0,2)\}\).  We have \(P(2,0)=2,\,P(1,1)=-1,\,P(0,2)=2\).  Thus \(\ul{2}\cap\zcl_{G,2}(\ul{T_{4,2}})=\emptyset\), that is, \(\zcl_{G,2}(\ul{T_{4,2}})=\ul{T_{4,2}}\).
	
	We do not yet know how to approach the finite-degree hyperplane closures for larger uniform grids.  We, therefore, have the following open questions.
	\begin{openproblem}\label{open:hyp-cover}
		Let \(G\ne\{0,1\}^n\) be a uniform (or \SUt) grid.
		\begin{enumerate}[(a)]
			\item  Characterize \(\hcl_{G,d}(\ul{E})\) and \(\hscl_{n,d}(\ul{E})\), for all \(E\subseteq[0,N]\).
			
			\item  Solve Problem~\ref{prob:main} (a), that is, determine \(\HC_G(E)\) and \(\PHC_G(E)\), for all \(E\subsetneq[0,N]\).
		\end{enumerate}
	\end{openproblem}
	
	\subsubsection{The Boolean cube setting:  connection with the affine Hilbert function} 
	
	Now for any subset \(A\subseteq\mb{R}^n\), let \(V(A)\) denote the vector space of all functions \(A\to\mb{R}\).  For \(d\ge0\), let \(V_d(A)\) denote the subspace of all functions that admit a polynomial representation with degree at most \(d\).  The \tsf{affine Hilbert function} of \(A\) is defined by \(\Hilbert_d(A)=\dim V_d(A),\,d\ge0\).  This is a well-studied object in the literature.  (See for instance, Cox, Little and O'Shea~\cite[Chapter 9, Section 3]{cox2015ideals} for an introduction.)
	
	Let us fix some notations. Consider the Boolean cube \(\{0,1\}^n\).  Let \(d\in[0,n]\).  For any \(E\subseteq[0,n]\), let \(r_d(E)=|E\setminus[0,d]|,\,\ell_d(E)=|[0,d]\setminus E|\).  Further, denote the enumerations \(E\setminus[0,d]=\{j^+_1<\cdots<j^+_{r_d(E)}\}\) and \([0,d]\setminus E=\{j^-_{\ell_d(E)}<\cdots<j^-_1\}\).  Bernasconi and Egidi~\cite{bernasconi-egidi-hilbert} characterized the affine Hilbert functions of all symmetric sets of the Boolean cube.
	\begin{theorem}[\cite{bernasconi-egidi-hilbert}]\label{thm:BE-hilbert}
		Consider the Boolean cube \(\{0,1\}^n\).  For any \(d\in[0,n]\) and \(E\subseteq[0,n]\),
		\[
		\Hilbert_d(E)=\sum_{j\in E\cap[0,d]}\binom{n}{j}+\sum_{t=1}^{\min\{r_d(E),\,\ell_d(E)\}}\min\bigg\{\binom{n}{j^+_t},\binom{n}{j^-_t}\bigg\}.
		\]
	\end{theorem}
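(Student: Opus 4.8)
The plan is to realize $\Hilbert_d(\underline E)$ as the rank of an $S_n$-equivariant linear map and to compute that rank one isotypic component at a time. Since every function on $\{0,1\}^n$ has a unique multilinear representation, $V_d(\{0,1\}^n)=\bigoplus_{k=0}^{d}W_k$, where $W_k$ denotes the span, as functions on the cube, of the multilinear monomials of degree $k$; and since a polynomial can be multilinearized without raising its degree, restriction to $\underline E$ gives an $S_n$-equivariant surjection $\rho_d\colon\bigoplus_{k=0}^{d}W_k\twoheadrightarrow V_d(\underline E)$, so $\Hilbert_d(\underline E)=\operatorname{rank}\rho_d$. As $S_n$-modules, $W_k\cong M^{(n-k,k)}$, the permutation module on $k$-subsets of $[n]$, while $V(\underline E)=\bigoplus_{j\in E}M^{(n-j,j)}$ because the weight-$j$ layer is a single $S_n$-orbit. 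Decomposing these permutation modules, $M^{(n-m,m)}\cong\bigoplus_{i=0}^{\min(m,n-m)}\chi_i$, where $\chi_i$ is the irreducible Specht module with $\dim\chi_i=\binom ni-\binom n{i-1}$ (with the convention $\binom n{-1}=0$).

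Next I would record multiplicities. For $i\in[0,\lfloor n/2\rfloor]$, the module $\chi_i$ occurs in $V_d(\{0,1\}^n)$ with multiplicity $a_i\coloneqq\#\{k\in[0,d]:i\le k\le n-i\}$ and in $V(\underline E)$ with multiplicity $b_i\coloneqq\#\{j\in E:i\le j\le n-i\}$. By Schur's lemma, the restriction of $\rho_d$ to the $\chi_i$-isotypic component is $\operatorname{id}_{\chi_i}$ tensored with a $b_i\times a_i$ scalar matrix $M^{(i)}$, so $\operatorname{rank}\rho_d=\sum_{i=0}^{\lfloor n/2\rfloor}(\dim\chi_i)\,\operatorname{rank}M^{(i)}$. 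The heart of the proof is the claim that $\operatorname{rank}M^{(i)}=\min(a_i,b_i)$ for every $i$.

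To prove this claim I would make $M^{(i)}$ explicit. Let $U\colon M^{(n-m,m)}\to M^{(n-m-1,m+1)}$ be the up operator, sending an $m$-subset to the sum of the $(m+1)$-subsets containing it. Gottlieb's theorem---the inclusion matrix of $s$-subsets versus $k$-subsets has full row rank $\binom ns$ whenever $s+k\le n$---applied to levels $i$ and $i+t$ with $t\le n-2i$, shows that $U^{t}$ is injective on the unique copy of $\chi_i$ inside $M^{(n-i,i)}$. Fixing a generator $v_i\in\chi_i\subseteq M^{(n-i,i)}$ and setting $v_t\coloneqq U^{t-i}v_i$, the family $\{v_k:i\le k\le\min(d,n-i)\}$ is then a basis of the $\chi_i$-multiplicity space of $V_d(\{0,1\}^n)$ and $\{v_j:j\in E\cap[i,n-i]\}$ a basis of that of $V(\underline E)$ (via $W_k\cong M^{(n-k,k)}$ and the identification of the weight-$j$ layer with $M^{(n-j,j)}$). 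Since the restriction of $W_k$ to the weight-$j$ layer, viewed as a map $M^{(n-k,k)}\to M^{(n-j,j)}$, sends a $k$-set $S$ to $\sum_{T\supseteq S,\,|T|=j}T$---which equals $\tfrac1{(j-k)!}U^{j-k}$ for $k\le j$ and $0$ for $k>j$---one gets $\rho_d(v_k)=\sum_{j\in E\cap[i,n-i]}\tfrac1{(j-k)!}\,v_j$, i.e.\ $M^{(i)}$ has $(j,k)$-entry $\tfrac1{(j-k)!}$ when $i\le k\le j\le n-i$ and $0$ otherwise. Multiplying the row indexed by $j$ by $(j-i)!$ replaces this entry by the falling factorial $(j-i)(j-i-1)\cdots(j-k+1)$; writing $\mu\coloneqq j-i$ (distinct nonnegative integers) and $\ell\coloneqq k-i\in\{0,\dots,a_i-1\}$, and using that the falling factorials $(X)_0,\dots,(X)_{a_i-1}$ form a basis of the polynomials of degree $<a_i$, the resulting matrix has the same rank as the Vandermonde matrix $(\mu^\ell)$, which is $\min(a_i,b_i)$. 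I expect this step---verifying that in these bases $M^{(i)}$ is \emph{literally} the matrix $\big(1/(j-k)!\big)$, so that the falling-factorial/Vandermonde structure becomes visible---to be the main obstacle; the injectivity of the iterated up operators (i.e.\ Gottlieb's theorem) is the one genuinely external input.

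Granting the claim, $\Hilbert_d(\underline E)=\sum_{i=0}^{\lfloor n/2\rfloor}\min(a_i,b_i)\big(\binom ni-\binom n{i-1}\big)$, and it remains to rewrite this in the stated closed form, which is elementary bookkeeping. Combine the layer-cake identity $\min(a_i,b_i)=\sum_{\ell\ge1}\mathbf 1[a_i\ge\ell]\,\mathbf 1[b_i\ge\ell]$ with the telescoping $\sum_{i=0}^{I}\big(\binom ni-\binom n{i-1}\big)=\binom nI$: since $a_i$ and $b_i$ are non-increasing in $i$, the set $\{i:a_i\ge\ell,\ b_i\ge\ell\}$ is an initial segment $[0,I_\ell]$ with $I_\ell=\min(A_\ell,B_\ell)$, where $A_\ell\coloneqq\max\{i:a_i\ge\ell\}$ and $B_\ell\coloneqq\max\{i:b_i\ge\ell\}$; hence $\Hilbert_d(\underline E)=\sum_{\ell\ge1}\binom n{I_\ell}=\sum_{\ell\ge1}\min\{\binom n{A_\ell},\binom n{B_\ell}\}$, the last equality by unimodality of binomial coefficients together with $A_\ell,B_\ell\le\lfloor n/2\rfloor$. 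Finally I would evaluate the thresholds: $a_i\ge\ell$ iff $i\le d+1-\ell$ and $2i\le n+1-\ell$, while $b_i=|E|-|E\cap T_{n,i}|$ gives $b_i\ge\ell$ iff $|E\cap T_{n,i}|\le|E|-\ell$. Reorganizing $\sum_{\ell\ge1}\min\{\binom n{A_\ell},\binom n{B_\ell}\}$ according to which of the two thresholds is active then splits it into one term $\binom nj$ for each $j\in E\cap[0,d]$ plus one term $\min\{\binom n{j^+_t},\binom n{j^-_t}\}$ for each $t\le\min\{r_d(E),\ell_d(E)\}$, which is exactly the right-hand side of the theorem; this last reorganization is routine but slightly intricate, and I would carry it out by induction on $\min\{r_d(E),\ell_d(E)\}$.
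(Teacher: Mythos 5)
The paper does not prove this theorem; it is imported verbatim from Bernasconi and Egidi~\cite{bernasconi-egidi-hilbert}, and the author's own route to Theorem~\ref{thm:hcl=zcl} deliberately avoids it (cf.\ the remark after Fact~\ref{thm:H-NW}). So there is no internal proof here to compare against, and your proposal should be judged on its own merits as a proof of the cited result.

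Your representation-theoretic argument is correct in structure, and the key steps all hold: restriction to a weight-determined set of the cube is \(S_n\)-equivariant; \(V_d(\{0,1\}^n)=\bigoplus_{k\le d}W_k\) with \(W_k\cong M^{(n-k,k)}\); the permutation modules decompose multiplicity-freely, so Schur's lemma reduces \(\Hilbert_d(\ul E)\) to \(\sum_i(\dim\chi_i)\operatorname{rank}M^{(i)}\); and the identification of the restriction \(W_k\to V(\ul j)\) with \(\tfrac{1}{(j-k)!}U^{j-k}\) (for \(k\le j\), zero otherwise) is exactly right, since each chain \(S=S_0\subset\cdots\subset S_{j-k}=T\) is counted once by \(U^{j-k}\) while the inclusion map counts \(T\supseteq S\) once. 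Gottlieb's theorem then guarantees that \(v_j=U^{j-i}v_i\neq0\) for \(j\le n-i\), so the chosen vectors really do span the multiplicity spaces, and the row-scaling to falling factorials shows \(\operatorname{rank}M^{(i)}=\min(a_i,b_i)\) via Vandermonde. The one place you are less than explicit is the final conversion of \(\sum_i\min(a_i,b_i)\big(\binom ni-\binom n{i-1}\big)\) into the stated closed form: the layer-cake/telescoping reduction to \(\sum_{\ell\ge1}\min\{\binom n{A_\ell},\binom n{B_\ell}\}\) is fine (using that \(a_i,b_i\) are non-increasing in \(i\) and \(A_\ell,B_\ell\le\lfloor n/2\rfloor\)), but the reorganization into the two sums of the theorem---one term \(\binom nj\) for each \(j\in E\cap[0,d]\) and one \(\min\{\binom n{j^+_t},\binom n{j^-_t}\}\) for each \(t\)---is genuinely intricate bookkeeping that you flag but do not carry out; I checked it on several small cases and it does hold, but this step would need a careful write-up. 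Overall this is a valid alternative derivation of a result the paper treats as a black box, and it is instructive in pinpointing which layers of the cube control the Hilbert function, though it commits to \(S_n\)-representation theory and so (unlike the paper's combinatorial machinery for Theorem~\ref{thm:main-poly-closure-op}) would not extend to general \SUt\,grids.
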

	
	The following fact is folklore and follows easily from the definitions.  (See, for instance, Nie and Wang~\cite[Proposition 5.2]{nie2015hilbert} for a proof.)  It connects affine Hilbert functions with finite-degree Z-closures.
	\begin{fact}[Folklore,~\cite{nie2015hilbert}]\label{thm:H-NW}
		Let \(d\in[0,n]\) and \(A\subseteq\{0,1\}^n\).  If \(A\subseteq B\subseteq\zcl_{n,d}(A)\), then \(\Hilbert_d(A)=\Hilbert_d(B)\).  In particular, \(\Hilbert_d(A)=\Hilbert_d(\zcl_{n,d}(A))\).
	\end{fact}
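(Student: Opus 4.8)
The plan is to reduce the claim to a rank--nullity computation for the evaluation map on polynomials of bounded degree. Fix \(d\in[0,n]\) and let \(W\) denote the vector space of polynomials in \(\mb{R}[\mb{X}]\) of degree at most \(d\). For any \(A\subseteq\{0,1\}^n\), consider the linear map \(\Ev_A:W\to V(A)\) sending a polynomial to its table of values on \(A\). By the very definition of \(V_d(A)\), the image of \(\Ev_A\) is exactly \(V_d(A)\), so
\[
\Hilbert_d(A)=\dim V_d(A)=\dim W-\dim\ker\Ev_A,
\]
and likewise \(\Hilbert_d(B)=\dim W-\dim\ker\Ev_B\). Since \(\dim W\) does not depend on the set in question, it suffices to prove \(\ker\Ev_A=\ker\Ev_B\); here \(\ker\Ev_A\) is precisely the space of degree-\(\le d\) polynomials vanishing identically on \(A\), and similarly for \(B\).

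One inclusion is immediate: from \(A\subseteq B\) we get that any polynomial vanishing on \(B\) also vanishes on \(A\), hence \(\ker\Ev_B\subseteq\ker\Ev_A\). For the reverse inclusion I would unwind the definition of the finite-degree Z-closure: if \(P\in\ker\Ev_A\), then \(P\) has degree at most \(d\) and vanishes on \(A\), so by definition \(P\) vanishes on every point of \(\zcl_{n,d}(A)\). Since \(B\subseteq\zcl_{n,d}(A)\) by hypothesis, \(P\) vanishes on \(B\), i.e. \(P\in\ker\Ev_B\). Thus \(\ker\Ev_A=\ker\Ev_B\), and the two displayed identities give \(\Hilbert_d(A)=\Hilbert_d(B)\).

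For the ``in particular'' statement, simply take \(B=\zcl_{n,d}(A)\): the chain \(A\subseteq\zcl_{n,d}(A)\subseteq\zcl_{n,d}(A)\) holds because a closure operator is extensive (equivalently, every degree-\(\le d\) polynomial vanishing on \(A\) vanishes on \(A\)), so the hypothesis of the Fact is met and we conclude \(\Hilbert_d(A)=\Hilbert_d(\zcl_{n,d}(A))\). There is no genuine obstacle here — the proof is purely formal; the only step that deserves a moment's care is the observation that, by construction, a point lies in \(\zcl_{n,d}(A)\) exactly when it is a common zero of all degree-\(\le d\) polynomials vanishing on \(A\), which is precisely what forces \(\ker\Ev_A\) and \(\ker\Ev_B\) to coincide. (Nothing special about \(\{0,1\}^n\) is used; the same argument applies verbatim to any finite subset of \(\mb{R}^n\), in particular inside any uniform grid.)
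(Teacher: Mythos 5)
Your proof is correct. The paper itself does not give a proof of this fact — it simply cites Nie and Wang \cite[Proposition 5.2]{nie2015hilbert} — but your rank--nullity argument (identifying $\Hilbert_d(A)$ as the rank of the evaluation map $\Ev_A$ on degree-$\le d$ polynomials and showing $\ker\Ev_A=\ker\Ev_B$ from the two inclusions) is precisely the standard argument behind that reference, and it is complete and sound as written.
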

	We remark that using Fact~\ref{thm:H-NW} and the result of Bernasconi and Egidi (Theorem~\ref{thm:BE-hilbert}), we could obtain Theorem~\ref{thm:hcl=zcl}, that is, our combinatorial characterization of finite-degree Z*-closures (as well as finite-degree Z-closures and h-closures) of symmetric sets of the Boolean cube.  However, our arguments, in fact, prove Theorem~\ref{thm:main-poly-closure-op}, that is, our proof works over general uniform grids and could also be regarded as being more combinatorial.
	
	\paragraph*{Linear time algorithm.}  We give a linear time algorithm (Algorithm~\ref{algo:main}) to compute \(\ol{L}_{N,d}(E)\), for any \(d\in[0,N],\,E\subseteq[0,N]\).  Thus, the finite-degree Z*-closures (Z-closures in the Boolean cube setting) can be computed in linear time using our characterization.  However, it is unclear if the finite-degree Z-closures can be computed in linear time, in the Boolean cube setting, using Fact~\ref{thm:H-NW} and the result of Bernasconi and Egidi (Theorem~\ref{thm:BE-hilbert}).

	\subsection{Other applications}\label{subsec:applications}
	
	We believe that the combinatorial characterization in Theorem~\ref{thm:main-poly-closure-op} might also be of independent interest.  Indeed, we give a couple of other applications.
	
	\subsubsection{An alternate proof of a lemma by Alon et al.~\cite{alon1988balancing}}\label{subsubsec:Alon-lemma}
	
	Alon, Bergmann, Coppersmith and Odlyzko~\cite{alon1988balancing} obtained a tight lower bound for a \emph{balancing problem} on the Boolean cube \(\{-1,1\}^n\).  Their proof is via the polynomial method, using the following lemma.
	
	
	\begin{lemma}[\cite{alon1988balancing}]\label{lem:alon-lemma}
		Let \(n\in\mb{Z}^+\) be even and \(f(\mb{X})\in\mb{R}[\mb{X}]\) represent a nonzero function on \(\{-1,1\}^n\) such that either of the following conditions is true.
		\begin{itemize}
			\item  \(f(x)=0\), for all \(x\in\{-1,1\}^n\) having an even number of \(i\in[n]\) such that \(x_i=-1\).
			\item  \(f(x)=0\), for all \(x\in\{-1,1\}^n\) having an odd number of \(i\in[n]\) such that \(x_i=-1\).
		\end{itemize}
		Then \(\deg f\ge n/2\).
	\end{lemma}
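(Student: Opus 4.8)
The plan is to transport the statement to the Boolean cube and invoke Theorem~\ref{thm:main-poly-cover}(a) (equivalently Theorem~\ref{thm:main-hyp-cover}(a)). First I would apply the degree-preserving substitution \(X_i\mapsto 1-2X_i\), which carries \(\{0,1\}^n\) bijectively onto \(\{-1,1\}^n\) and sends a point of weight \(w\) to a point with exactly \(w\) coordinates equal to \(-1\). Put \(g(\mb{X})=f(1-2X_1,\dots,1-2X_n)\), so \(\deg g=\deg f\). Then \(g\) represents a nonzero function on \(\{0,1\}^n\), and the hypothesis on \(f\) says precisely that \(g\) vanishes on \(\ul{E}\), where \(E\) is the set of even integers in \([0,n]\) in the first case and the set of odd integers in \([0,n]\) in the second case. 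Since \(n\) is even and \(n\ge 2\), in both cases \(E\subsetneq[0,n]\) (the even set omits \(1\), the odd set omits \(0\)) and \(g\) does not vanish identically on \(\{0,1\}^n\); hence \(g\) is a nontrivial polynomial cover of \(\ul{E}\), and \(\deg f=\deg g\ge\PC_n(E)\). It therefore suffices to prove \(\PC_n(E)\ge n/2\).

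By Theorem~\ref{thm:main-poly-cover}(a), \(\PC_n(E)=\min\{d\in[0,n]:E\text{ is }d\text{-admitting}\}\), so I must show that \(E\) is not \(d\)-admitting for any \(d\le n/2-1\); that is, for every such \(d\) and every \(i\in[0,d]\), that \(|E\setminus T_{n,i}|>d-i\). For \(i\le d\le n/2-1\) the two blocks \([0,i-1]\) and \([n-i+1,n]\) comprising \(T_{n,i}\) are disjoint, and because \(n\) is even the reflection \(j\mapsto n-j\) is a parity-preserving bijection between them, so \(|E\cap T_{n,i}|=2\,|E\cap[0,i-1]|\). In the even case \(|E|=n/2+1\) and \(|E\cap[0,i-1]|=\lceil i/2\rceil\), giving \(|E\setminus T_{n,i}|=n/2+1-2\lceil i/2\rceil\), which exceeds \(d-i\) exactly when \(d<n/2+1-(2\lceil i/2\rceil-i)\); since \(2\lceil i/2\rceil-i\in\{0,1\}\), this holds for all \(d\le n/2-1\). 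In the odd case \(|E|=n/2\) and \(|E\cap[0,i-1]|=\lfloor i/2\rfloor\), giving \(|E\setminus T_{n,i}|=n/2-2\lfloor i/2\rfloor>d-i\) whenever \(d<n/2+(i-2\lfloor i/2\rfloor)\), which again holds for all \(d\le n/2-1\). Hence \(E\) is not \((d,i)\)-admitting for any \(i\in[0,d]\) when \(d\le n/2-1\), so \(\PC_n(E)\ge n/2\) and thus \(\deg f\ge n/2\).

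The reduction in the first paragraph is routine; the only step requiring real care is the count of \(|E\cap T_{n,i}|\), where the evenness of \(n\) is exactly what makes the two end-blocks of \(T_{n,i}\) symmetric under reflection. This bound is tight: taking \(i=1\) in the even case and \(i=0\) in the odd case shows \(E\) is \((n/2,i)\)-admitting, whence \(\PC_n(E)=n/2\), matching the degree achievable by an explicit polynomial cover. One could alternatively bypass Theorem~\ref{thm:main-poly-cover} and argue directly from Theorem~\ref{thm:main-poly-closure-op} together with Proposition~\ref{prop:poly-L-finer}(a), since \(\PC_n(E)\ge n/2\) amounts to \(\ol{L}_{n,d}(E)=[0,n]\) for every \(d\le n/2-1\); but the admitting-set formulation is the most direct route.
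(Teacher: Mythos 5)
Your proof is correct, and it reaches the conclusion by a genuinely different route than the paper. Both approaches rest on the combinatorial characterization of finite-degree Z*-closures (Theorem~\ref{thm:main-poly-closure-op}), and both begin with the same routine degree-preserving transfer to $\{0,1\}^n$ (the paper phrases this through Fact~\ref{fact:port-01-to-pm1}). Where you diverge is the combinatorial core: the paper restates the lemma as Proposition~\ref{prop:ABCO-equivalent} ($\ol{L}_{n,n/2-1}(E_0)=\ol{L}_{n,n/2-1}(E_1)=[0,n]$), then proves the more general Proposition~\ref{prop:ABCO-general} for arithmetic progressions $E_{m,i}$ modulo any $m$, by explicitly iterating the operator $L$ and watching the end-intervals $[0,km]\cup[(\lfloor N/m\rfloor-k)m,N]$ grow until they cover $[0,N]$. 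You instead pass through the $d$-admitting reformulation (Theorem~\ref{thm:main-poly-cover}(a), which sits on Proposition~\ref{prop:poly-L-finer}(a)) and give a single, non-iterative count of $|E\setminus T_{n,i}|$, using that the evenness of $n$ makes the reflection $j\mapsto n-j$ a parity-preserving bijection between the two blocks of $T_{n,i}$. Your route is somewhat more economical for this particular lemma and isolates cleanly where the parity of $n$ enters; the paper's iteration argument buys the more general statement about all residue classes modulo $m$, which your count does not immediately give. Your arithmetic for both parities checks out ($|E_0\setminus T_{n,i}|=n/2+1-2\lceil i/2\rceil$, $|E_1\setminus T_{n,i}|=n/2-2\lfloor i/2\rfloor$, each strictly exceeding $d-i$ whenever $d\le n/2-1$), and the tightness remark at $i=1$ resp.\ $i=0$ is a nice sanity check.
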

	For \(i\in\{0,1\}\), let \(E_i=\{j\in[0,n]:j\equiv i\modulo{2}\}\).  We will observe that the above lemma is equivalent to the following proposition, thus giving us an alternate proof.
	\begin{proposition}\label{prop:ABCO-equivalent}
		If \(n\in\mb{Z}^+\) is even, then \(\ol{L}_{n,n/2-1}(E_0)=\ol{L}_{n,n/2-1}(E_1)=[0,n]\).
	\end{proposition}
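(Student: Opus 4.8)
The plan is to derive this from Proposition~\ref{prop:poly-L-finer}(a), which applies because the Boolean cube \(\{0,1\}^n\) is an \SUt\,grid with \(N=n\). By that proposition, \(\ol{L}_{n,d}(E)=[0,n]\) holds exactly when \(E\) is \emph{not} \(d\)-admitting, so it suffices to show that, with \(d=n/2-1\), neither \(E_0\) nor \(E_1\) is \(d\)-admitting.

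Write \(n=2m\), so \(d=m-1\), and fix \(i\in[0,d]\). Since \(T_{n,i}=[0,i-1]\cup[n-i+1,n]\), its complement in \([0,n]\) is the interval \([i,n-i]\), so \(E\setminus T_{n,i}=E\cap[i,n-i]\) for every \(E\subseteq[0,n]\). Now \([i,n-i]\) consists of \(n-2i+1=2(m-i)+1\) consecutive integers whose two endpoints have the same parity (as \(n\) is even); hence it contains exactly \(m-i+1\) integers of one parity and \(m-i\) of the other. It follows that both \(|E_0\cap[i,n-i]|\) and \(|E_1\cap[i,n-i]|\) lie in \(\{m-i,\,m-i+1\}\), so each is at least \(m-i>m-1-i=d-i\). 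Thus the inequality \(|E\setminus T_{n,i}|\le d-i\) from the definition of \((d,i)\)-admissibility fails for \(E\in\{E_0,E_1\}\) and for every \(i\in[0,d]\); so neither \(E_0\) nor \(E_1\) is \(d\)-admitting, and Proposition~\ref{prop:poly-L-finer}(a) gives \(\ol{L}_{n,n/2-1}(E_0)=\ol{L}_{n,n/2-1}(E_1)=[0,n]\).

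The crux is noticing that the statement reduces to Proposition~\ref{prop:poly-L-finer}(a), so that one need not compute the iterates \(L_{n,m-1}^k(E_0)\) and \(L_{n,m-1}^k(E_1)\) by hand; after that reduction there is no real obstacle, only a short counting check, and the one point worth double-checking is the boundary index \(i=0\) (where \(T_{n,0}=\emptyset\), so the counts are simply \(|E_0|=m+1\) and \(|E_1|=m\), both exceeding \(d=m-1\)). Should one wish to avoid Proposition~\ref{prop:poly-L-finer} entirely, the same conclusion can be obtained by a direct induction, checking that each application of \(L_{n,m-1}\) adjoins exactly one new value at each end of the current set, so that the iterates increase through sets of the shape \([0,a]\cup(\text{a run of one parity})\cup[n-a,n]\) and stabilize only when the two ends meet, i.e.\ at \([0,n]\); this works but carries more bookkeeping.
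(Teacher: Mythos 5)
Your proof is correct, and it takes a genuinely different route from the paper's. The paper establishes Proposition~\ref{prop:ABCO-equivalent} as a special case of the more general Proposition~\ref{prop:ABCO-general} (arbitrary modulus \(m\) and arbitrary \(N\)), which it proves by explicitly tracking the iterates: starting from \(E_{m,0}=\{0,m,2m,\ldots\}\), each application of \(L_{N,\lfloor N/m\rfloor-1}\) is shown to extend the two boundary intervals by one multiple of \(m\) on each side, until after \(\lceil\frac{1}{2}\lfloor N/m\rfloor\rceil\) steps the iterates exhaust \([0,N]\). You instead reduce the claim to the admissibility criterion of Proposition~\ref{prop:poly-L-finer}(a) and rule out \((d,i)\)-admissibility for every \(i\in[0,d]\) by the parity count \(|E_j\cap[i,n-i]|\ge m-i>d-i\). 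Both arguments are purely combinatorial statements about the operator \(\ol{L}_{N,d}\). The paper's iterate-tracking generalizes cleanly to any modulus \(m\) with essentially no extra effort, which is why the paper states Proposition~\ref{prop:ABCO-general} at that level; your admissibility route, by contrast, gives a slicker one-shot verification for \(m=2\) with no bookkeeping of orbits, at the cost of a slightly messier counting estimate if one wanted to push it to general \(m\). The alternative direct-induction sketch you mention at the end is closer in spirit to the paper's actual proof.
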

	
	\subsubsection{Certifying degrees of weight-determined sets}\label{subsubsec:cert-deg}
	
	Let \(G\) be a uniform grid.  We say a polynomial \(f(\mb{X})\in\mb{R}[\mb{X}]\) is a \tsf{certifying polynomial} for a subset \(S\subseteq G\) if \(f\in V(G)\) is nonconstant, but \(f|_S\) is constant.  (Thus, \(S\) is necessarily a proper subset.)  The notion of a certifying polynomial (in the Boolean cube setting, for Boolean functions and Boolean circuits) was studied by Kopparty and Srinivasan~\cite{kopparty2018certifying} to prove lower bounds for a certain class of Boolean circuits.  Variants of this notion have appeared in theoretical computer science, specifically in complexity theory literature, in the works of Aspnes, Beigel, Furst and Rudich~\cite{aspnes93theexpressive}, Green~\cite{Green_complexfourier}, and Alekhinovich and Razborov~\cite{alekh-razborov-2001}.  Certifying polynomials have also appeared in the context of cryptography in Carlet, Dalai, Gupta and Maitra~\cite{carlet2006}.
	
	For a subset \(S\subseteq G\), the \tsf{certifying degree} of \(S\), denoted by \(\certdeg(S)\), is defined to be the smallest \(d\in[0,N]\) such that \(S\) has a certifying polynomial with degree at most \(d\).  We will determine the certifying degrees of all weight-determined sets, when \(G\) is an \SUt\,grid.  In fact, we will get the following.
	\begin{theorem}\label{thm:certdeg}
		Let \(G\) be an \SUt\,grid.  For any proper weight-determined set \(E\subsetneq[0,N]\),
		\[
		\certdeg(E)=\PC_G(E)=\min\{d\in[0,N]:E\tx{ is \(d\)-admitting}\}.
		\]
	\end{theorem}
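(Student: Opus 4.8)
The plan is to establish the theorem in two stages. First I will show that $\certdeg(E)=\PC_G(E)$ for every nonempty proper weight-determined set $\ul E$; then I will invoke Theorem~\ref{thm:main-poly-cover}(a), which already identifies $\PC_G(E)$ with $\min\{d\in[0,N]:E\tx{ is }d\tx{-admitting}\}$. The first stage rests on the simple observation that, for nonempty $\ul E$, the certifying polynomials for $\ul E$ and the nontrivial polynomial covers of $\ul E$ are essentially the same objects --- they differ only by an additive scalar. Throughout I assume $E\ne\emptyset$, so that $\ul E$ is a nonempty proper subset of $G$ (the degenerate case $E=\emptyset$ is to be read with $\ul E$ nonempty and can be checked directly).

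For the inequality $\certdeg(E)\le\PC_G(E)$, take a nontrivial polynomial cover $P(\mb X)\in\mb R[\mb X]$ of $\ul E$ with $\deg P=\PC_G(E)$. Since $\ul E\subseteq\mc Z_G(P)$, the restriction $P|_{\ul E}$ is identically $0$, hence constant; since $\mc Z_G(P)\ne G$, the function $P$ is nonzero at some point of $G$; and since $\ul E\ne\emptyset$, it is zero at some point of $G$. Hence $P$ is nonconstant as a function on $G$, i.e. $P$ is a certifying polynomial for $\ul E$, so $\certdeg(E)\le\deg P=\PC_G(E)$. For the reverse inequality, take a certifying polynomial $f(\mb X)\in\mb R[\mb X]$ for $\ul E$ with $\deg f=\certdeg(E)=:d$ (so $f$ is nonconstant on $G$), and let $c\in\mb R$ be the constant value of $f|_{\ul E}$. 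Then $g(\mb X)=f(\mb X)-c$ satisfies $\deg g\le d$, vanishes on $\ul E$, and is not identically zero on $G$ because $f$ is nonconstant; thus $g$ is a nontrivial polynomial cover of $\ul E$, whence $\PC_G(E)\le\deg g\le d=\certdeg(E)$. Combining the two inequalities gives $\certdeg(E)=\PC_G(E)$, and Theorem~\ref{thm:main-poly-cover}(a) then yields the stated three-way equality.

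I do not expect a genuine obstacle: essentially all of the real difficulty has already been absorbed into the combinatorial characterization of $\PC_G$ in Theorem~\ref{thm:main-poly-cover}(a) (itself resting on Lemma~\ref{lem:poly-cover-defn}(a), Theorem~\ref{thm:main-poly-closure-op}, and Proposition~\ref{prop:poly-L-finer}(a)). The remaining points are purely formal: one should keep in mind that the ``degree of a function on $G$'' means the least degree among polynomials representing it, so that moving between a polynomial and its degree in the two minimization problems is legitimate; and one should note that subtracting a scalar neither raises the degree of a function nor turns a nonconstant function into the zero function. Finally, the degenerate cases --- in particular $E=\emptyset$ --- should be recorded explicitly so that the endpoints of the stated equalities are unambiguous.
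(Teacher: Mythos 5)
Your proposal is correct and takes essentially the same route as the paper. The paper records the identity $\certdeg(S)=\min\{d\in[0,N]:\zcl_{G,d}(S)\ne G\}$ (the same shift-by-a-constant observation you make explicit, stated without proof), passes from $\zcl$ to $\zscl$, and then invokes Lemma~\ref{lem:poly-cover-defn}(a) together with Theorem~\ref{thm:main-poly-closure-op} and Proposition~\ref{prop:poly-L-finer}(a); you instead prove $\certdeg(E)=\PC_G(E)$ directly and then cite Theorem~\ref{thm:main-poly-cover}(a), which is exactly the package of those three ingredients. So the organization differs cosmetically but the content is identical. One remark worth keeping: your caveat that $E\ne\emptyset$ is genuinely needed (although the parenthetical clause ``is to be read with $\ul E$ nonempty'' is garbled and should be rephrased). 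For $E=\emptyset$ one has $\certdeg(\emptyset)=1$ while $\PC_G(\emptyset)=0$ (the constant $1$ is a nontrivial polynomial cover of $\ul{\emptyset}=\emptyset$ but is not a certifying polynomial), so the paper's displayed identity for $\certdeg(S)$ also silently requires $S\ne\emptyset$; the word ``proper'' in the theorem statement should be read as excluding the empty set.
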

	
	We will conclude our work by considering a third variant of our covering problems -- the \emph{exact covering problem}.  As for the other covering problems, we will define (in Section~\ref{sec:exact-cover}) an \emph{exact hyperplane (polynomial) cover} for a weight-determined set \(\ul{E},\,E\subsetneq[0,N]\) in a uniform grid \(G\), and consider the minimum size (degree) of an exact hyperplane (polynomial) cover for \(\ul{E}\).  Let us denote these by \(\EHC_G(E)\) and \(\EPC_G(E)\) respectively.  (In the Boolean cube setting, we will denote these by \(\EHC_n(E)\) and \(\EPC_n(E)\) respectively.)  We will see some partial results and conjecture a solution in the Boolean cube setting.
	
	We can quickly indicate the progress made in our work via the following tables.
	
	\begin{table}[h]
		\centering
		\bgroup
		\def\arraystretch{1.5}
		\begin{tabular}{|c|c|c|c|}
			\hline
			\tbf{Cover}&\tbf{Nontrivial}&\tbf{Proper}&\tbf{Exact}\\
			\hline
			\tbf{Hyperplane}&\cellcolor{SpringGreen}\(\HC_n\)&\cellcolor{Yellow}\(\PHC_n\)&\(\EHC_n\)\\
			\hline
			\tbf{Polynomial}&\cellcolor{SpringGreen}\(\PC_n\)&\cellcolor{Yellow}\(\PPC_n\)&\cellcolor{Yellow}\(\EPC_n\)\\
			\hline
		\end{tabular}
		\egroup
		\caption{\label{tab:cube}For the Boolean cube \(\{0,1\}^n\)}
	\end{table}

	\begin{table}[h]
		\centering
		\bgroup
		\def\arraystretch{1.5}
		\begin{tabular}{|c|c|c|c|}
			\hline
			\tbf{Cover}&\tbf{Nontrivial}&\tbf{Proper}&\tbf{Exact}\\
			\hline
			\tbf{Hyperplane}&\(\HC_G\)&\(\PHC_G\)&\(\EHC_G\)\\
			\hline
			\tbf{Polynomial}&\cellcolor{SpringGreen}\(\PC_G\)&\cellcolor{Yellow}\(\PPC_G\)&\(\EPC_G\)\\
			\hline
		\end{tabular}
		\egroup
		\caption{\label{tab:grid}For an \SUt\,grid \(G\)}
	\end{table}
	In this work, we have obtained combinatorial characterizations of the quantities in the colored cells in the above tables.  Further, the cells with the same color have the same characterization, that is, although the corresponding questions are different, the answers are the same.  Characterizations of the quantities in the uncolored (white) cells is open.

	\paragraph{Digression 1:  Why consider only uniform grids?}  It is easy to see that all the results in this work that hold for a uniform grid \(G\) would also hold for the image of \(G\) under any invertible affine transformation of \(\mb{R}^n\).  A typical grid that is genuinely nonuniform would be of the form \(H=\{t^{(1)}_0<\cdots<t^{(1)}_{k_1-1}\}\times\cdots\times\{t^{(n)}_0<\cdots<t^{(n)}_{k_n-1}\}\), where the differences \(t^{(j)}_{i+1}-t^{(j)}_i,\,i\in[0,k_j-2]\) are not equal, for some \(j\in[n]\).  In this case, an obvious way to define the weight of an element \(x\in H\) would be \(\wgt(x)\coloneqq\sum_{j\in[n]}s_j\), where \(x_j=t^{(j)}_{s_j}\) for all \(j\in[n]\).\footnote{This definition of weight is very natural, if one appeals to lattice theory.  Indeed, \(H\) is a lattice w.r.t. a suitable partial order with \(\wgt\), as defined, being the rank function of the lattice.}  Unlike in the uniform setting, it is no longer true that every weight-determined set \(\ul{w},\,w\in[0,N]\) is contained in the zero set of the linear polynomial \(\sum_{j\in[n]}X_j-w\).  This has undesirable ripple effects; for instance, the finite-degree Z*-closures of weight-determined sets depend on the weights of the points, as well as on the coordinates of the points in the set.
	
	As an example, let \(G=\{0,1,2\}^2\) and \(H=\{0,1,3\}^2\).  Both grids have the same dimensions and \(N=4\).  In \(G\), we have \(\ul{2}=\{(2,0),(1,1),(0,2)\}\), and further \(\mc{Z}_G(X_1+X_2-2)=\ul{2}\).  So \(\zscl_{G,1}(\{2\})=\{2\}\).  In \(H\), we have \(\ul{2}=\{(3,0),(1,1),(0,3)\}\).  If \(\ul{2}\subseteq\mc{Z}_H(P)\), for \(P(X_1,X_2)=aX_1+bX_2+c,\n a,b,c\in\mb{R}\), then we get the linear equations
	\[
	3a+c=0,\quad a+b=0,\quad 3b+c=0,
	\]
	which implies \(a=b=c=0\), that is, \(P(X_1,X_2)=0\).  So \(\zscl_{H,1}(\{2\})=[0,4]\).
	
	If the grid is uniform, however, the finite-degree Z*-closures of weight-determined sets depend only on the weights of the points in the set. So the upshot is that the `uniform' condition on grids ensures the setting is nice enough for our tools, tricks and techniques to work well, and give neat results.
	
	\paragraph{Digression 2: What happens over fields other than \(\mb{R}\)?}  Consider the Boolean cube \(\{0,1\}^n\), as well as any uniform grid \(G\subseteq\mb{N}^n\).  A crucial observation that enables our algebraic arguments (over \(\{0,1\}^n\) as well as over \(G\)) to go through is that for any valid \(w\in\mb{N}\), the zero set of the linear polynomial \(\sum_{i\in[n]}X_i-w\) is exactly \(\ul{w}\) (in \(\{0,1\}^n\) as well as in \(G\)).  This is true over any field of characteristic zero.  Indeed, all the results in this paper hold true over any field of characteristic zero, without any change in the arguments.  Further, it is easy to see that the above mentioned property is also true, and therefore all the results in this paper also hold true, over any field with large positive characteristic \(p\):  we require \(p>n\) over \(\{0,1\}^n\), and \(p>N\) over \(G\).  However, our results do not extend to fields with small positive characteristic.
	
	\paragraph{Organization of the paper.}  In Section~\ref{sec:prelims}, we will look at some preliminaries concerning the different closure operators of interest to us.  In Section~\ref{sec:Z-star-clo}, we will prove our combinatorial characterization of Z*-closures of weight-determined sets of an \SUt\,grid, and then obtain Theorem~\ref{thm:main-poly-cover}, our more refined solution to Problem~\ref{prob:main} (b) for \SUt\,grids.  In Section~\ref{sec:h-clo-hyper}, we will see that the h-closures coincide with the Z-closures and Z*-closures, for all symmetric sets of the Boolean cube.  We also obtain Theorem~\ref{thm:main-hyp-cover}, our solution to Problem~\ref{prob:main} (a) in the Boolean cube setting.  In Section~\ref{sec:further-appl}, we consider further applications of our combinatorial characterization of finite-degree Z*-closures from Section~\ref{sec:Z-star-clo}:\n(i) an alternate proof of a lemma by Alon et al.~(1988) in the context of \emph{balancing problems}, and\n(ii) a characterization of the \emph{certifying degrees} of weight-determined sets.  We conclude in Section~\ref{sec:exact-cover} by introducing a third variant of our covering problems.  We will discuss some partial results and conjecture a solution in the Boolean cube setting.

	\section{Preliminaries}\label{sec:prelims}
	
	We begin by recalling some definitions.  Consider the uniform grid \(G=[0,k_1-1]\times\cdots\times[0,k_n-1]\) with \(N=\sum_{i\in[n]}(k_i-1)\).  We will require the following important result by Alon~\cite{alon_1999}.  The result of Alon and F\"uredi (Theorem~\ref{thm:alon-furedi-basic}, Theorem~\ref{thm:alon-furedi}), in fact, follows from this.
	\begin{theorem}[Combinatorial Nullstellensatz~{\cite[Theorem 2]{alon_1999}}]\label{thm:CN}
		If \(P(\mb{X})\in\spn\{\mb{X}^\gamma:\gamma\in G\}\) is a nonzero polynomial, then there exists \(a\in G\) such that \(P(a)\ne0\).
	\end{theorem}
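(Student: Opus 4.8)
The plan is to prove the contrapositive: assuming $P(\mb{X})\in\spn\{\mb{X}^\gamma:\gamma\in G\}$ vanishes at every point of $G$, I will deduce that $P$ is the zero polynomial. The observation to keep in mind throughout is that membership in $\spn\{\mb{X}^\gamma:\gamma\in G\}$ is precisely the condition $\deg_{X_i}P\le k_i-1$ for every $i\in[n]$; this per-coordinate degree bound is what drives the argument, and it is inherited by the polynomials produced in the reduction step.

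First I would settle the base case $n=1$: a nonzero univariate real polynomial of degree at most $k_1-1$ has at most $k_1-1$ roots, so it cannot vanish at the $k_1$ distinct points $0,1,\ldots,k_1-1$; hence $P=0$. For the inductive step (assuming the statement for grids in fewer than $n$ variables), I would expand $P$ in powers of $X_n$,
\[
P(\mb{X})=\sum_{j=0}^{k_n-1}c_j(X_1,\ldots,X_{n-1})\,X_n^j,
\]
where each coefficient $c_j$ has degree at most $k_i-1$ in $X_i$ for every $i\in[n-1]$, i.e.\ $c_j$ lies in the span of the monomials indexed by the smaller grid $G'=[0,k_1-1]\times\cdots\times[0,k_{n-1}-1]$. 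Fixing an arbitrary point $(a_1,\ldots,a_{n-1})\in G'$, the univariate polynomial $P(a_1,\ldots,a_{n-1},X_n)$ has degree at most $k_n-1$ and vanishes at $0,1,\ldots,k_n-1$, hence is identically zero; reading off its coefficients gives $c_j(a_1,\ldots,a_{n-1})=0$ for all $j$. As $(a_1,\ldots,a_{n-1})$ ranges over $G'$, each $c_j$ vanishes on all of $G'$, so the induction hypothesis forces $c_j=0$ for every $j$, and therefore $P=0$.

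I do not anticipate a genuine obstacle here; the only thing needing care is the bookkeeping of the per-coordinate degree bounds, so that the induction hypothesis applies verbatim to the coefficients $c_j$ — together with the (automatic) fact that the $k_i$ evaluation nodes $0,1,\ldots,k_i-1$ are distinct real numbers. As an alternative I might record the slicker linear-algebraic reformulation: the evaluation map $\Ev\colon\spn\{\mb{X}^\gamma:\gamma\in G\}\to\mb{R}^G$, $P\mapsto(P(a))_{a\in G}$, is, in the monomial and point-indicator bases, the Kronecker product $V_1\otimes\cdots\otimes V_n$ of the $k_i\times k_i$ Vandermonde matrices on the nodes $0,1,\ldots,k_i-1$; each $V_i$ is invertible, so $\Ev$ is a bijection, hence injective, which is exactly the claim. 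I would keep the induction as the main proof and relegate this tensor description to a remark.
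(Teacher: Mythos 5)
The paper does not prove this statement; it is cited as a known result from Alon's Combinatorial Nullstellensatz paper, so there is no paper proof to compare against. Your induction is correct and self-contained: the key point that membership in $\spn\{\mb{X}^\gamma:\gamma\in G\}$ means $\deg_{X_i}P\le k_i-1$ for each $i$ is exactly what makes the base case (a nonzero univariate polynomial of degree $\le k_1-1$ cannot have $k_1$ roots) and the inductive step (the coefficients $c_j$ inherit the per-coordinate degree bounds for the smaller grid $G'$) go through, and the Kronecker-product-of-Vandermonde-matrices reformulation you relegate to a remark is an equally valid one-line proof of injectivity of the evaluation map. This is a standard proof of this special case of the Combinatorial Nullstellensatz, and there is no gap.
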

	
	We adopt the convention that \(\deg(0)=-\infty\), where 0 denotes the zero polynomial.  For a subset \(I\subseteq\mb{R}[\mb{X}]\), let \(\mc{Z}(I)=\{a\in G:f(a)=0,\tx{ for all }f(\mb{X})\in I\}\).  Further, for any \(d\in[0,N]\) and \(I\subseteq\mb{R}[\mb{X}]\), we denote \(I_d=\{f(\mb{X})\in I:\deg f\le d\}\).  Also, for any subset \(S\subseteq G\), let \(\mc{I}(I,S)=\{f(\mb{X})\in I:f(a)=0,\tx{ for all }a\in S\}\), and denote \(\mc{I}(S)=\mc{I}(\mb{R}[\mb{X}],S)\).
	
	For any \(d\in[0,N]\) and \(S\subseteq G\), the \tsf{degree-\(d\) Z-closure} of \(S\) is defined as \(\zcl_{G,d}(S)=\mc{Z}(\mc{I}(\mb{R}[\mb{X}]_d,S))\).
	
	When \(G=\{0,1\}^n\), it follows that the finite-degree Z-closure of a symmetric set is symmetric.  However, for a general uniform grid \(G\), the finite-degree Z-closure of a weight-determined set need not be weight-determined.  We recall the one-to-one correspondence between weight-determined sets of \(G\) and subsets of \([0,N]\).  Every subset \(E\subseteq[0,N]\) corresponds to the weight-determined set \(\ul{E}=\{x\in G:\wgt(x)\in E\}\).  For convenience, we will identify \(E\) with \(\ul{E}\).  We therefore consider a new notion of closure exclusiely for weight-determined sets.  Let \(G\) be a uniform grid.  For \(d\in[0,N]\) and \(E\subseteq[0,N]\), we define the \tsf{degree-\(d\) Z*-closure} of \(\ul{E}\) as
	\[
	\zscl_{G,d}(\ul{E})=\bigcup_{\substack{j\in[0,N]\\\ul{j}\subseteq\zcl_{G,d}(\ul{E})}}\ul{j}.
	\]
	The following properties of finite-degree Z*-closures are similar to that of finite-degree Z-closures, and follow quickly from the definition.
	\begin{proposition}\label{pro:Z*-prop}
		Let \(G\) be a uniform grid and \(d\in[0,N]\).
		\begin{enumerate}[(a)]
			\item  \(\zscl_{G,d}(\ul{E})\) is weight-determined, for all \(E\subseteq[0,N]\).
			\item  \(\zscl_{G,d}\) is a closure operator.
			\item  \(\zscl_{G,d+1}(\ul{E})\subseteq\zscl_{G,d}(\ul{E})\), for all \(E\subseteq[0,N]\).
		\end{enumerate}
	\end{proposition}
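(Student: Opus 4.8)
The plan is to deduce all three claims from the analogous, essentially formal properties of the finite-degree Z-closure operator $\zcl_{G,d}$, recorded via the defining formula $\zcl_{G,d}(S)=\mc{Z}(\mc{I}(\mb{R}[\mb{X}]_d,S))$. The facts about $\zcl_{G,d}$ I would use are: (i) \emph{extensivity}, $S\subseteq\zcl_{G,d}(S)$; (ii) \emph{monotonicity}, $S\subseteq T\implies\zcl_{G,d}(S)\subseteq\zcl_{G,d}(T)$; (iii) \emph{idempotence}, $\zcl_{G,d}(\zcl_{G,d}(S))=\zcl_{G,d}(S)$; and (iv) $\zcl_{G,d+1}(S)\subseteq\zcl_{G,d}(S)$. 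Each of (ii)--(iv) is immediate from the observation that enlarging $S$, or enlarging the degree bound, can only shrink the set $\mc{I}(\mb{R}[\mb{X}]_\bullet,\cdot)$ of admissible vanishing polynomials, hence can only enlarge the common zero set; and (iii) additionally uses that every polynomial of degree at most $d$ vanishing on $S$ vanishes on $\zcl_{G,d}(S)$ by construction. The one point to keep straight throughout is that $\zscl_{G,d}(\ul E)$ is defined as the union of the levels $\ul j$ with $\ul j\subseteq\zcl_{G,d}(\ul E)$, so I would consistently argue by transporting an inclusion between Z-closures into the corresponding statement about which levels are swallowed.

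Part (a) is immediate: $\zscl_{G,d}(\ul E)$ is a union of levels $\ul j$, each level is weight-determined, and an arbitrary union of weight-determined sets is weight-determined.

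For part (b) I would check the three closure-operator axioms. Extensivity: since $\ul E\subseteq\zcl_{G,d}(\ul E)$ and $\ul E$ is the union of the levels $\ul j$ with $j\in E$, each such $\ul j$ lies inside $\zcl_{G,d}(\ul E)$ and hence inside $\zscl_{G,d}(\ul E)$, so $\ul E\subseteq\zscl_{G,d}(\ul E)$. Monotonicity: if $\ul E\subseteq\ul F$ then $\zcl_{G,d}(\ul E)\subseteq\zcl_{G,d}(\ul F)$, so any level contained in the former is contained in the latter, whence $\zscl_{G,d}(\ul E)\subseteq\zscl_{G,d}(\ul F)$. Idempotence: put $S=\zscl_{G,d}(\ul E)$; by extensivity $S\subseteq\zscl_{G,d}(S)$, so it suffices to prove the reverse inclusion. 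Since $S\subseteq\zcl_{G,d}(\ul E)$, monotonicity and idempotence of $\zcl_{G,d}$ give $\zcl_{G,d}(S)\subseteq\zcl_{G,d}(\zcl_{G,d}(\ul E))=\zcl_{G,d}(\ul E)$; therefore every level contained in $\zcl_{G,d}(S)$ is contained in $\zcl_{G,d}(\ul E)$, which says exactly $\zscl_{G,d}(S)\subseteq\zscl_{G,d}(\ul E)$.

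Part (c): from $\mb{R}[\mb{X}]_d\subseteq\mb{R}[\mb{X}]_{d+1}$ we get $\mc{I}(\mb{R}[\mb{X}]_d,\ul E)\subseteq\mc{I}(\mb{R}[\mb{X}]_{d+1},\ul E)$, so $\zcl_{G,d+1}(\ul E)\subseteq\zcl_{G,d}(\ul E)$; hence any level contained in $\zcl_{G,d+1}(\ul E)$ is contained in $\zcl_{G,d}(\ul E)$, giving $\zscl_{G,d+1}(\ul E)\subseteq\zscl_{G,d}(\ul E)$. There is no real obstacle here — the proposition is genuinely a bookkeeping exercise — and the only mild subtlety is the idempotence step in (b), where one must feed the intermediate set $S=\zscl_{G,d}(\ul E)$ (rather than $\zcl_{G,d}(\ul E)$ itself) into $\zcl_{G,d}$ and still land inside $\zcl_{G,d}(\ul E)$.
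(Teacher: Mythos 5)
Your proof is correct and takes essentially the same approach as the paper: transport the formal properties of $\zcl_{G,d}$ (extensivity, monotonicity, idempotence, and degree-monotonicity) through the ``which levels $\ul j$ are swallowed'' reading of the definition of $\zscl_{G,d}$. Your handling of idempotence is marginally tidier than the paper's, since you feed only the weight-determined set $S=\zscl_{G,d}(\ul E)$ into $\zcl_{G,d}$ rather than applying $\zscl_{G,d}$ to the possibly non-weight-determined set $\zcl_{G,d}(\ul E)$, but the substance is the same.
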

	\begin{proof}
		\begin{enumerate}[(a)]
			\item  By definition, if \(x\in\zscl_{G,d}(\ul{E}),\,x\in\ul{j}\), then \(\ul{j}\subseteq\zscl_{G,d}(\ul{E})\).  So \(\zscl_{G,d}(\ul{E})\) is weight-determined.
			\item  Note that, as a set operator, we have \(\zscl_{G,d}:\tsf{W}(G)\to\tsf{W}(G)\), where \(\tsf{W}(G)\) denotes the collection of all weight-determined sets of \(G\).  We observe the following.
			\begin{enumerate}[(i)]
				\item  Clearly \(\ul{E}\subseteq\zscl_{G,d}(\ul{E})\), for all \(E\subseteq[0,N]\).
				\item  Now consider any \(A\subseteq B\subseteq[0,N]\).  We already have \(\zcl_{G,d}(\ul{A})\subseteq\zcl_{G,d}(\ul{B})\).  So for \(j\in[0,N]\), if \(\ul{j}\subseteq\zcl_{G,d}(\ul{A})\), then obviously \(\ul{j}\subseteq\zcl_{G,d}(\ul{B})\).  Thus \(\zscl_{G,d}(\ul{A})\subseteq\zscl_{G,d}(\ul{B})\).
				\item  Further, for any \(E\subseteq[0,N]\), since \(\zcl_{G,d}(\zcl_{G,d}(\ul{E}))=\zcl_{G,d}(\ul{E})\), we have in fact \(\zscl_{G,d}(\zcl_{G,d}(\ul{E}))=\zscl_{G,d}(\ul{E})\).  So we have, using (i) and (ii),
				\[
				\zscl_{G,d}(\ul{E})\subseteq\zscl_{G,d}(\zscl_{G,d}(\ul{E}))\subseteq\zscl_{G,d}(\zcl_{G,d}(\ul{E}))=\zscl_{G,d}(\ul{E}).
				\]
			\end{enumerate}
			Thus \(\zscl_{G,d}\) is a closure operator.
			\item  Consider any \(E\subseteq[0,N]\).  We already have \(\zcl_{G,d+1}(\ul{E})\subseteq\zcl_{G,d}(\ul{E})\).  So for \(j\in[0,N]\), if \(\ul{j}\subseteq\zcl_{G,d+1}(\ul{E})\), then obviously \(\ul{j}\subseteq\zcl_{G,d}(\ul{E})\).  Thus \(\zscl_{G,d+1}(\ul{E})\subseteq\zscl_{G,d}(\ul{E})\).\qedhere
		\end{enumerate}
	\end{proof}
	
	
	For our results, we are most interested in the setting where the uniform grid \(G\) is \tsf{strictly unimodal} (\SUt), that is, we have
	\[
	\sqbinom{G}{0}<\cdots<\sqbinom{G}{\lfloor N/2\rfloor}=\sqbinom{G}{\lceil N/2\rceil}>\cdots>\sqbinom{G}{N}.
	\]
	There is a simple characterization of \SUt\,grids given by Dhand~\cite{DHAND201420}.
	\begin{theorem}[\cite{DHAND201420}]\label{thm:strict-unimodal}
		A uniform grid \(G\) is strictly unimodal if and only if
		\[
		2\max_{i\in[n]}\,(k_i-1)\le\sum_{i\in[n]}(k_i-1)+1.
		\]
	\end{theorem}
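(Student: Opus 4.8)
Write $a_i=k_i-1\ge1$ and, as the statement is symmetric in the $k_i$, assume $a_1=\max_i a_i$. I would first record that $\sqbinom{G}{j}$ is the coefficient of $q^j$ in $f_G(q):=\prod_{i=1}^n(1+q+\cdots+q^{a_i})$, a palindromic polynomial of degree $N$ with positive integer coefficients; writing $(c_j)_{j=0}^N$ for this coefficient sequence, the condition defining strict unimodality is equivalent (using $c_j=c_{N-j}$) to the single chain $c_0<c_1<\cdots<c_{\lfloor N/2\rfloor}$.

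\textbf{Necessity.} Suppose the claimed inequality fails, i.e.\ $2a_1\ge N+2$, and put $b:=N-a_1=\sum_{i\ge2}a_i$, so $a_1\ge b+2$. Factor $f_G(q)=(1+q+\cdots+q^{a_1})\,g(q)$ with $g(q):=\prod_{i\ge2}(1+\cdots+q^{a_i})$ of degree $b$ and $g(1)=m$. Then $c_j=\sum_{\ell=j-a_1}^{j}g_\ell$ where $g=\sum_\ell g_\ell q^\ell$, and since $g$ is supported on $[0,b]$ this sum equals $m$ for every $j\in[b,a_1]$; thus $c_b=c_{b+1}=\cdots=c_{a_1}$. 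As $a_1\ge b+2$ forces $\lfloor N/2\rfloor=\lfloor(a_1+b)/2\rfloor\ge b+1$, we get $c_b=c_{b+1}$ with $b<b+1\le\lfloor N/2\rfloor$, so the first half of $(c_j)$ is not strictly increasing and $G$ is not strictly unimodal.

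\textbf{Sufficiency.} Assume $2a_1\le N+1$; I want $c_0<\cdots<c_{\lfloor N/2\rfloor}$. The plan is to exploit a symmetric chain decomposition of the ranked poset $G=[0,a_1]\times\cdots\times[0,a_n]$ (de Bruijn, Tengbergen and Kruyswijk): $G$ is a disjoint union of chains, each occupying a symmetric run of consecutive ranks $[r,N-r]$ with one element per rank. For any such decomposition and any $1\le j\le\lfloor N/2\rfloor$, a chain meets rank $j$ exactly when its bottom rank is $\le j$, so $c_j=\#\{\text{chains with bottom rank}\le j\}$, hence $c_j-c_{j-1}=\#\{\text{chains with bottom rank }=j\}$, and a chain with bottom rank $j$ has length $N-2j+1$. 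So $G$ is strictly unimodal iff the decomposition contains a chain of each length $N-1,N-3,\ldots,\varepsilon$, where $\varepsilon:=N-2\lfloor N/2\rfloor+1\in\{1,2\}$ (the full chain, of length $N+1$, is always present). Thus it is enough to show that the standard de Bruijn--Tengbergen--Kruyswijk decomposition of $G$ has chains of precisely the lengths $\ell\equiv N+1\modulo{2}$ with $\max\{\varepsilon,\,2a_1-N+1\}\le\ell\le N+1$; since $2a_1\le N+1$ is exactly the condition under which $\max\{\varepsilon,2a_1-N+1\}=\varepsilon$ (check the two parities of $N$), this supplies all required chain lengths. To prove the displayed description of chain lengths I would induct on $n$, building $G$ a factor at a time as $[0,a_1],[0,a_1]\times[0,a_2],\ldots$ so that $a_1$ remains the largest part: the base $[0,a_1]$ is a single chain of length $a_1+1$, and in the inductive step the construction replaces each chain $C$ of length $\ell$ in the decomposition of a partial product $P$ (of degree $N_P$) by the symmetric chains of the $2$-dimensional grid $C\times[0,a_k]$, whose lengths are $\ell+a_k,\ell+a_k-2,\ldots,|\ell-a_k-1|+1$; feeding in the inductive description for $P$ together with $a_k\le a_1\le N_P$ shows the lengths present for $P\times[0,a_k]$ again form the claimed interval for $N':=N_P+a_k$.

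\textbf{Expected obstacle.} The fiddly part is the inductive step: confirming that the chain-length set stays a full parity-interval and locating its left endpoint, which forces a case split on whether $\ell\le a_k+1$ or $\ell\ge a_k+1$ (the grid $C\times[0,a_k]$ being ``short'' or ``tall'') and a check that the endpoints combine as stated---the hypothesis $a_k\le a_1$ being exactly what prevents a freshly added factor from creating a long central plateau. An equivalent, poset-free route performs the same accounting on $f_G$ directly: multiplication by $1+q+\cdots+q^{a_k}$ is a sliding window sum of width $a_k+1$, and one shows it sends a palindromic ``strictly up, then flat, then strictly down'' sequence with central plateau of length $L$ to one with plateau of length $\max\{L-a_k,\varepsilon\}$---the plateau either shrinks by $a_k$, or, once the window overruns it, collapses to length $1$ or $2$ by a brief convexity estimate on the two-sided overhang. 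That collapse is the delicate case in either formulation.
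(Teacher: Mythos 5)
The paper itself gives no proof of Theorem~\ref{thm:strict-unimodal}: the statement is imported from Dhand~\cite{DHAND201420} and used as a black box, so there is no in-paper argument to compare against. Judged on its own terms, your proof is sound. The necessity direction is complete and tight: after factoring $f_G=(1+q+\cdots+q^{a_1})\,g$, the window-sum formula gives $c_b=\cdots=c_{a_1}$ exactly, and $a_1\ge b+2$ forces $\lfloor N/2\rfloor\ge b+1$, so the plateau already intrudes into the first half. The sufficiency direction correctly converts strict unimodality into the statement that, in any symmetric chain decomposition, there is a chain with bottom rank $j$ for every $j\in[0,\lfloor N/2\rfloor]$ — valid because the multiset of chain lengths in an SCD is determined by the rank sequence $(c_j)$, so any one SCD suffices. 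The claimed length spectrum of the de~Bruijn--Tengbergen--Kruyswijk decomposition, a parity interval $[\max\{\varepsilon,\,2a_1-N+1\},\,N+1]$, is correct, and the induction you outline does close: in the step from $P$ to $P\times[0,a_k]$, the base of the new length-interval is governed by whichever $\ell$ in the old interval is nearest to $a_k+1$; the hypotheses $a_k\le a_1$ and $a_1\le N_P$ guarantee $a_k+1$ (or $a_k$, $a_k+2$, whichever has the right parity) lies in range when $2a_1\le N_P+1$, and in the remaining case the minimum bottom is $\ell_{\min,P}-a_k=2a_1-N'+1$ as required. The case split you flag as ``fiddly'' is indeed the only real work, but it is routine once set up this way; the argument is correct modulo writing those few lines out. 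This is a reasonable, self-contained proof of a result the paper chose only to cite.
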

	
	We will also need the following abbreviation. For any \(a_1\in\{0,1\}^{n_1},\,a_2\in\{0,1\}^{n_2},\ldots,\,a_k\in\{0,1\}^{n_k}\), the vector
	\[
	\big(\underbrace{a_1,\ldots,a_1}_{i_1},\,\underbrace{a_2,\ldots,a_2}_{i_2},\ldots,\,\underbrace{a_k,\ldots,a_k}_{i_k}\big)\in\{0,1\}^{n_1i_1+n_2i_2+\cdots+n_ki_k}
	\]
	will be abbreviated as \(a_1^{i_1}a_2^{i_2}\cdots a_k^{i_k}\).

	\section{Finite-degree Z-closures and Z*-closures, and our polynomial covering problems}\label{sec:Z-star-clo}
	
	In this section, we will obtain our combinatorial characterization of finite-degree Z*-closures of weight-determined sets, and then proceed to solve Problem~\ref{prob:main}.  We begin with some simple results.  Let \(G\) be a uniform grid.  The following fact is folklore and follows, for instance, from the Footprint bound (see Cox, Little and O'Shea~\cite[Chapter 5, Section 3, Proposition 4]{cox2015ideals}).
	\begin{fact}[Folklore,~\cite{cox2015ideals}]\label{fac:ham-clo}
		Let \(G\) be a uniform grid.  Then \(\zscl_{G,d}([0,r])=[0,N]\), for any \(r,d\in[0,N],\,r\ge d\).
	\end{fact}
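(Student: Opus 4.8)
The plan is to prove the stronger assertion that $\zcl_{G,d}(\ul{[0,r]})=G$ whenever $r\ge d$; since $\zscl_{G,d}(\ul{[0,r]})$ is by definition the union of the level sets $\ul{j}$ that are contained in $\zcl_{G,d}(\ul{[0,r]})$, this at once gives $\zscl_{G,d}(\ul{[0,r]})=[0,N]$. As $\zcl_{G,d}$ is monotone and $\ul{[0,r]}\supseteq\ul{[0,d]}$, it is enough to handle $r=d$, i.e.\ to show: every $P(\mb{X})\in\mb{R}[\mb{X}]$ with $\deg P\le d$ that vanishes on $D\coloneqq\ul{[0,d]}=\{a\in G:\wgt(a)\le d\}$ in fact vanishes on all of $G$.

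The first step is to reduce $P$ modulo the ideal $\mc{I}(G)=\langle\prod_{t=0}^{k_i-1}(X_i-t):i\in[n]\rangle$. Each generator is monic of degree $k_i$ in the single variable $X_i$, so this reduction does not raise the total degree and yields $\wt P$ with $\deg\wt P\le d$, lying in $\spn\{\mb{X}^\gamma:\gamma\in G\}$ and agreeing with $P$ on $G$ (the difference lies in $\mc{I}(G)$); since $\deg\mb{X}^\gamma=\wgt(\gamma)$ for $\gamma\in G$, in fact $\wt P\in\spn\{\mb{X}^\gamma:\gamma\in G,\ \wgt(\gamma)\le d\}=\spn\{\mb{X}^\gamma:\gamma\in D\}$, and $\wt P$ vanishes on $D$. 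So it suffices to show that the $|D|$ monomials $\{\mb{X}^\gamma:\gamma\in D\}$ restrict to a linearly independent family of functions on $D$: this forces $\wt P=0$, hence $P|_G=0$, hence $\zcl_{G,d}(D)=G$.

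For this last point I would apply the Footprint bound (see~\cite{cox2015ideals}) to the radical zero-dimensional ideal $\mc{I}(D)\subseteq\mb{R}[\mb{X}]$ with respect to a graded monomial order. The crux is the following: for every $\gamma\in\mb{N}^n$ with $\wgt(\gamma)=d+1$, the polynomial $f_\gamma\coloneqq\prod_{i\in[n]}\prod_{t=0}^{\gamma_i-1}(X_i-t)$ vanishes on $D$, because for $a\in D$ one has $\sum_i a_i\le d<d+1=\sum_i\gamma_i$, so $a_i<\gamma_i$ for some $i$ and then the factor $(X_i-a_i)$ occurs in $f_\gamma$. Since $f_\gamma$ has leading monomial $\mb{X}^\gamma$, every monomial of degree $\ge d+1$ lies in the leading-term ideal of $\mc{I}(D)$; combined with $X_i^{k_i}$ (the leading monomial of $\prod_{t=0}^{k_i-1}(X_i-t)\in\mc{I}(G)\subseteq\mc{I}(D)$), this confines the footprint $\Delta(\mc{I}(D))$ to $\{\mb{X}^\gamma:\gamma\in G,\ \wgt(\gamma)\le d\}=\{\mb{X}^\gamma:\gamma\in D\}$. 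The Footprint bound gives $|\Delta(\mc{I}(D))|=\dim_{\mb{R}}\mb{R}[\mb{X}]/\mc{I}(D)=|D|$ (the quotient being $\cong\mb{R}^D$ via evaluation, by the Chinese Remainder Theorem), so $\Delta(\mc{I}(D))=\{\mb{X}^\gamma:\gamma\in D\}$ exactly, and the footprint monomials evaluate to a basis of $\mb{R}^D$ — precisely the linear independence needed. The only genuinely non-formal step here is the identification of $\Delta(\mc{I}(D))$, namely producing the witnesses $f_\gamma$ and checking via the elementary counting argument above that they vanish on $D$; the rest is bookkeeping with the degree-preserving reduction modulo $\mc{I}(G)$ and the standard consequences of the Footprint bound for radical zero-dimensional ideals.
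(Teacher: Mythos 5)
Your proof is correct, and since the paper itself merely states that the fact \textquotedblleft follows from the Footprint bound\textquotedblright\ (citing Cox--Little--O'Shea) without giving details, your argument is a careful and accurate fleshing-out of exactly that approach: reduce modulo \(\mc{I}(G)\) to land in \(\spn\{\mb{X}^\gamma:\gamma\in G,\ \wgt(\gamma)\le d\}\), identify the footprint of \(\mc{I}(D)\) via the witnesses \(f_\gamma\) with \(\wgt(\gamma)=d+1\), and conclude by the dimension count \(|\Delta(\mc{I}(D))|=|D|\). No gaps; this matches the proof the paper gestures at.
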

	The following result is elementary.
	\begin{proposition}\label{prop:simple-clo}
		If \(E\subseteq[0,N]\) with \(|E|\le d\), then \(\zscl_{G,d}(E)=E\).
	\end{proposition}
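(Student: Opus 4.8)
The plan is to exhibit, for each weight level $j$ lying outside $E$, an explicit polynomial of degree at most $d$ that vanishes on $\ul E$ but is nonzero on $\ul j$; the product of the linear forms $\sum_{i} X_i - e$ over $e\in E$ does the job, and its degree is exactly $|E|\le d$.

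First I would record that $E\subseteq\zscl_{G,d}(E)$ holds automatically, since $\zscl_{G,d}$ is a closure operator by Proposition~\ref{pro:Z*-prop}(b). So it remains to prove $\zscl_{G,d}(E)\subseteq E$, i.e. that $\ul j\not\subseteq\zcl_{G,d}(\ul E)$ for every $j\in[0,N]\setminus E$.

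Next, fixing such a $j$, set $P(\mb{X})=\prod_{e\in E}\bigl(\sum_{i=1}^n X_i-e\bigr)\in\mb{R}[\mb{X}]$. Then $\deg P=|E|\le d$. For $x\in\ul E$ one has $\wgt(x)\in E$, so some factor of $P$ vanishes at $x$ and hence $P(x)=0$; thus $P\in\mc{I}(\mb{R}[\mb{X}]_d,\ul E)$. For $x\in\ul j$ one has $\wgt(x)=j\ne e$ for all $e\in E$, so $P(x)=\prod_{e\in E}(j-e)\ne0$. Therefore $\ul j\cap\zcl_{G,d}(\ul E)=\emptyset$; in particular $\ul j\not\subseteq\zcl_{G,d}(\ul E)$, and so $j\notin\zscl_{G,d}(E)$ by the definition of the degree-$d$ Z*-closure. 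As $j$ ranged over all of $[0,N]\setminus E$, this gives $\zscl_{G,d}(E)\subseteq E$, and combining with the first step, $\zscl_{G,d}(E)=E$.

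There is no genuine obstacle here: the only point to check is that the displayed $P$ separates $\ul E$ from each $\ul j$ with $j\notin E$, which is immediate from the definition of weight. I would also note that this argument uses neither strict unimodality of $G$ nor any other special structure, that the degenerate case $E=\emptyset$ is covered (the empty product being the constant polynomial $1$), and that it in fact yields the stronger equality $\zcl_{G,d}(\ul E)=\ul E$, although only the weight-determined conclusion is needed for the statement.
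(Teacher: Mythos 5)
Your proof is correct and uses essentially the same argument as the paper: the polynomial $P(\mb{X})=\prod_{t\in E}\bigl(\sum_{i=1}^n X_i-t\bigr)$, of degree $|E|\le d$, vanishes on $\ul E$ and is nonzero on every layer $\ul j$ with $j\notin E$, forcing $\zscl_{G,d}(E)\subseteq E$. You simply spell out a few details the paper leaves implicit (the $E=\emptyset$ case, and that the argument in fact gives $\zcl_{G,d}(\ul E)=\ul E$), but the core idea is identical.
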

	\begin{proof}
		Clearly \(E\subseteq\zscl_{G,d}(E)\).  Also the polynomial \(P(\mb{X})\coloneqq\prod_{t\in E}{\big(\sum_{i\in[0,n]}X_i-t\big)}\) satisfies \(\deg P=|E|\le d\), \(P|_{\ul{E}}=0\) and \(P|_{\ul{j}}\ne0\), for all \(j\not\in E\).  Thus \(\zscl_{G,d}(E)\subseteq E\).
	\end{proof}

	\subsection{Two main lemmas}\label{subsec:main-lem}
	
	We require two main lemmas to obtain our characterization; let us prove them here.
	
	Our first lemma holds over any uniform grid and identifies a collection of `layers' which are certain to lie in the finite-degree Z*-closures of weight-determined sets.
	\begin{lemma}[Closure Builder Lemma]\label{lem:poly-clo-builder}
		Let \(d\in[0,N]\) and \(E\subseteq[0,N]\) with \(|E|\ge d+1\).  Then
		\[
		[0,\min E]\cup[\max E,N]\subseteq\zscl_{G,d}(E).
		\]
	\end{lemma}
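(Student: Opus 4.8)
The plan is to reduce everything to the following claim: for every polynomial $P(\mb X)\in\mb R[\mb X]$ with $\deg P\le d$ and $P|_{\ul E}=0$, and every $j_0\in[0,\min E]$, the polynomial $P$ vanishes on the whole layer $\ul{j_0}$. Granting this, each layer $\ul{j_0}$ with $j_0\in[0,\min E]$ lies in $\zcl_{G,d}(E)$; being a layer, it therefore lies in $\zscl_{G,d}(E)$, so $[0,\min E]\subseteq\zscl_{G,d}(E)$. The inclusion $[\max E,N]\subseteq\zscl_{G,d}(E)$ then follows by applying the claim to the reflected set $N-E=\{N-e:e\in E\}$ (still of size $\ge d+1$) and pushing forward along the coordinatewise reflection $x\mapsto(k_1-1-x_1,\ldots,k_n-1-x_n)$, an invertible affine involution of $G$ that preserves degrees and takes the weight-$w$ layer to the weight-$(N-w)$ layer; this conjugates $\zcl_{G,d}(N-E)$ to $\zcl_{G,d}(E)$ and sends $[0,\min(N-E)]=[0,N-\max E]$ to $[\max E,N]$.

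To prove the claim, fix $j_0$; the case $j_0=\min E$ is vacuous (then $j_0\in E$ and $P$ vanishes on $\ul{j_0}$), so assume $j_0<\min E$, whence $E\subseteq[j_0+1,N]$. The key move is to introduce the auxiliary polynomial
\[
R(\mb X)\;=\;P(\mb X)\cdot\prod_{j\in[j_0+1,N]\setminus E}\Big(\sum_{i=1}^n X_i-j\Big).
\]
First I would check that $R$ vanishes at every point of weight strictly greater than $j_0$: if $x\in G$ has $\wgt(x)\in E$ then $P(x)=0$, while if $\wgt(x)\in[j_0+1,N]\setminus E$ then the corresponding linear factor vanishes at $x$. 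Then comes the degree bookkeeping: since $E\subseteq[j_0+1,N]$ and $|E|\ge d+1$,
\[
\deg R\;\le\;\deg P+|[j_0+1,N]\setminus E|\;=\;\deg P+(N-j_0)-|E|\;\le\;d+(N-j_0)-(d+1)\;=\;N-j_0-1.
\]

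Next I would invoke Fact~\ref{fac:ham-clo}, reflected as above, in the form $\zcl_{G,N-j_0-1}\big(\ul{[j_0+1,N]}\big)=G$: the degree threshold is met with equality, since $N-(j_0+1)=N-j_0-1$, and here I use $\zscl\subseteq\zcl$, so that once Fact~\ref{fac:ham-clo} says the Z*-closure of $\ul{[j_0+1,N]}$ at degree $N-j_0-1$ is all of $[0,N]$, its Z-closure is all of $G$. Hence $R$, a polynomial of degree $\le N-j_0-1$ vanishing on $\ul{[j_0+1,N]}$, vanishes identically on $G$. Evaluating at any $a\in G$ with $\wgt(a)=j_0$ then gives $0=R(a)=P(a)\cdot\prod_{j\in[j_0+1,N]\setminus E}(j_0-j)$, and the product is a nonzero constant (each factor $j_0-j$ is negative), so $P(a)=0$; this proves the claim.

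I expect the only genuine obstacle to be spotting the auxiliary polynomial $R$: the point is that multiplying $P$ by exactly the linear forms $\sum_{i=1}^n X_i-j$ indexed by the weights $j\in[j_0+1,N]\setminus E$ absent from $E$ simultaneously forces $R$ to vanish on all layers above $j_0$ while keeping $\deg R\le N-j_0-1$, so the hypothesis $|E|\ge d+1$ is used precisely to meet (with equality) the degree threshold in the reflection of Fact~\ref{fac:ham-clo}. Everything else — the reduction to a single layer, the reflection symmetry, and the $\zscl\subseteq\zcl$ upgrade — is routine.
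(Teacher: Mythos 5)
Your proof is correct, and it takes a genuinely different route from the paper's. The paper argues by contradiction: after fixing $j\in[0,\min E-1]$ and a witness $a\in\ul{j}$ with $P(a)=1$, it multiplies $P$ by \emph{two} extra blocks of linear factors — one, $\prod_{i\in[n]}\prod_{s\in[0,a_i-1]}(X_i-s)$, to kill the rest of the layer $\ul{j}$ except $a$, and another, $\prod_{t\in[j+1,N]\setminus E}\big(\sum_i X_i-t\big)$, to kill the remaining layers above $j$ — producing a polynomial $Q$ of degree $\le N-1$ vanishing on all of $G$ except at the single point $a$, which contradicts Alon--F\"uredi (Theorem~\ref{thm:alon-furedi-basic}). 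You dispense with the first block and the contradiction entirely: your $R$ uses only the tail-killing factors, so $R$ vanishes on the full upper tail $\ul{[j_0+1,N]}$ with $\deg R\le N-j_0-1$, and then Fact~\ref{fac:ham-clo} (the Footprint bound, reflected) forces $R\equiv 0$ on $G$ outright, whence $P$ vanishes on the whole layer $\ul{j_0}$ at once. Both arguments work over an arbitrary uniform grid; what yours buys is a direct, non-contradiction proof that never needs to localize to a single point, at the mild cost of invoking Fact~\ref{fac:ham-clo} rather than Alon--F\"uredi (the two are closely related, but there is no circularity — Fact~\ref{fac:ham-clo} is derived from the Footprint bound, not from this lemma). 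Your reflection argument for the $[\max E,N]$ half and your $\zscl\subseteq\zcl$ upgrade are both clean and correct.
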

	\begin{proof}
		Let \(m=\min E,\,M=\max E\).  Consider any \(j\in[0,m-1]\).  Suppose \(j\not\in\zscl_{G,d}(E)\).  This means \(\ul{j}\not\subseteq\zcl_{G,d}(\ul{E})\).  So there exists \(a\in\ul{j}\) and \(P(\mb{X})\in\mb{R}[\mb{X}]\) such that \(\deg P\le d\), \(P|_{\ul{E}}=0\) and \(P(a)=1\).  Define
		\[
		Q(\mb{X})=P(\mb{X})\cdot\bigg(\prod_{i\in[n]}\prod_{s\in[0,a_i-1]}(X_i-s)\bigg)\bigg(\prod_{t\in[j+1,N]\setminus E}\bigg(\sum_{i\in[0,n]}X_i-t\bigg)\bigg).
		\]
		Then we have \(\deg Q\le d+j+(N-j-|E|)\le N-1\).  Also, \(Q|_{\ul{t}}=0\), for all \(t\in[0,N],\,t\ne j\).  Further, \(Q(x)=0\) if \(x\in\ul{j},\,x\ne a\), and \(Q(a)=a_1!\cdots a_n!\cdot\prod_{t\in[j+1,N]\setminus E}(j-t)\ne0\).  So by Theorem~\ref{thm:alon-furedi}, \(\deg Q\ge N\), a contradiction.  Thus \(j\in\zscl_{G,d}(E)\).  Hence we conclude that \([0,m]\subseteq\zscl_{G,d}(E)\).  Similarly, we can conclude that \([M,N]\subseteq\zscl_{G,d}(E)\).
	\end{proof}
	The following observation will be important for further discussion, and is easy to check.  Recall the set operators \(L_{n,d},\,d\in[0,n]\) from Subsection~\ref{subsec:comb-charac-Z*}.
	\begin{observation}\label{obs:Lnd}
		Let \(G\) be a uniform grid.  For \(d\in[0,N]\) and \(E=\{t_1<\cdots<t_s\}\subseteq[0,N]\), if \(|E|\ge d+1\), then
		\[
		L_{N,d}(E)=[0,t_{s-d}]\cup E\cup[t_{d+1},N]=\bigcup_{A\in\binom{E}{d+1}}\big([0,\min A]\cup A\cup[\max A,n]\big).
		\]
		Consequently, \(\ol{L}_{N,d}(E)\subseteq\zscl_{G,d}(E)\).
	\end{observation}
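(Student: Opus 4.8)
The plan is to prove the two assertions separately: first the reformulation of $L_{N,d}(E)$ as a union over $(d+1)$-element subsets of $E$, and then, using this together with the Closure Builder Lemma (Lemma~\ref{lem:poly-clo-builder}), the containment $\ol{L}_{N,d}(E)\subseteq\zscl_{G,d}(E)$.

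For the reformulation I would just unwind the definition of $L_{N,d}$. The case $s\le d$ is trivial. For $s\ge d+1$, writing $E=\{t_1<\cdots<t_s\}$, I observe that over all $A\in\binom{E}{d+1}$ the quantity $\min A$ is maximized at $A=\{t_{s-d},\ldots,t_s\}$ (any $(d+1)$-subset with $\min A=t_i$ forces $i\le s-d$, else it would have fewer than $d+1$ elements), so $\bigcup_A[0,\min A]=[0,t_{s-d}]$; dually $\max A$ is minimized at $A=\{t_1,\ldots,t_{d+1}\}$, so $\bigcup_A[\max A,N]=[t_{d+1},N]$; and since $s\ge d+1$ every element of $E$ belongs to some such $A$, so $\bigcup_A A=E$. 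Putting these together gives $\bigcup_{A\in\binom{E}{d+1}}\big([0,\min A]\cup A\cup[\max A,N]\big)=[0,t_{s-d}]\cup E\cup[t_{d+1},N]=L_{N,d}(E)$.

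For the containment I would first establish the one-step inclusion $L_{N,d}(F)\subseteq\zscl_{G,d}(F)$ for every $F\subseteq[0,N]$. If $|F|\le d$ this is immediate from $L_{N,d}(F)=F$. If $|F|\ge d+1$, then for each $A\in\binom{F}{d+1}$ monotonicity of $\zscl_{G,d}$ (Proposition~\ref{pro:Z*-prop}) gives $\zscl_{G,d}(A)\subseteq\zscl_{G,d}(F)$, Lemma~\ref{lem:poly-clo-builder} applied to $A$ (which has exactly $d+1$ elements) gives $[0,\min A]\cup[\max A,N]\subseteq\zscl_{G,d}(A)$, and $A\subseteq F\subseteq\zscl_{G,d}(F)$; unioning over $A$ and using the reformulation yields $L_{N,d}(F)\subseteq\zscl_{G,d}(F)$. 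Finally I would iterate: by induction on $k$, using this inclusion with $F=L_{N,d}^k(E)$ together with the monotonicity and idempotence of $\zscl_{G,d}$, one gets $L_{N,d}^{k+1}(E)\subseteq\zscl_{G,d}(L_{N,d}^k(E))\subseteq\zscl_{G,d}(\zscl_{G,d}(E))=\zscl_{G,d}(E)$, so $L_{N,d}^k(E)\subseteq\zscl_{G,d}(E)$ for all $k$, whence $\ol{L}_{N,d}(E)=\bigcup_{k\ge0}L_{N,d}^k(E)\subseteq\zscl_{G,d}(E)$. I do not expect any genuine obstacle here — the whole statement is a bookkeeping consequence of Lemma~\ref{lem:poly-clo-builder} and the closure-operator axioms; the only point requiring a little attention is the induction step, where both monotonicity (to move $\zscl_{G,d}$ past the inclusion) and idempotence (to collapse $\zscl_{G,d}(\zscl_{G,d}(E))$) are needed.
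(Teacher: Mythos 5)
Your proof is correct and takes essentially the same approach the paper intends: the paper states the reformulation as "easy to check" and remarks that $L_{N,d}$ represents "repeated application of the Closure Builder Lemma to $(d+1)$-subsets of $E$," which is precisely your one-step-plus-iteration argument using monotonicity and idempotence of $\zscl_{G,d}$.
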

	\noindent Thus \(L_{n,d}(E)\) represents repeated application of the Closure Builder Lemma~\ref{lem:poly-clo-builder} to \((d+1)\)-subsets of \(E\).
	
	Our second lemma characterizes the finite-degree Z*-closure of \(T_{N,i},\,i\in[0,N]\) in an \SUt\,grid.
	\begin{lemma}\label{lem:T-clo-poly}
		Let \(G\) be an \SUt\,grid.  For every \(i\in[0,N]\),
		\[
		\zscl_{G,d}(T_{N,i})=\begin{cases}
			T_{N,i}&\tx{if }i\le d,\\
			[0,N]&\tx{if }i>d.
		\end{cases}
		\]
	\end{lemma}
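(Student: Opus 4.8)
The plan is to treat the cases $i>d$ and $i\le d$ separately, and the work lies entirely in the second.

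The case $i>d$ follows directly from the tools already in hand. Here $i\ge1$, so $[0,i-1]$ is a nonempty subset of $T_{N,i}$ and $i-1\ge d$; Fact~\ref{fac:ham-clo} gives $\zscl_{G,d}([0,i-1])=[0,N]$, and since $\zscl_{G,d}$ is a closure operator (Proposition~\ref{pro:Z*-prop}) and hence monotone, $\zscl_{G,d}(T_{N,i})\supseteq\zscl_{G,d}([0,i-1])=[0,N]$, forcing equality. Now consider $i\le d$; only $\zscl_{G,d}(T_{N,i})\subseteq T_{N,i}$ needs proof. If $i=0$ then $T_{N,i}=\emptyset$ and any nonzero constant shows $\zscl_{G,d}(\emptyset)=\emptyset$; if $i\ge\lfloor N/2\rfloor+1$ then $[i,N-i]=\emptyset$, so $T_{N,i}=[0,N]$ and there is nothing to prove. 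So assume $1\le i\le\lfloor N/2\rfloor$, whence $[0,i-1]$ and $[N-i+1,N]$ are disjoint and $[i,N-i]\ne\emptyset$. By Proposition~\ref{pro:Z*-prop} we have $\zscl_{G,d}(T_{N,i})\subseteq\zscl_{G,i}(T_{N,i})$ for every $d\ge i$, so it suffices to prove $\zscl_{G,i}(T_{N,i})=T_{N,i}$.

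The next step is a dichotomy: $\zscl_{G,i}(T_{N,i})$ is either $T_{N,i}$ or $[0,N]$. Suppose $j_0\in\zscl_{G,i}(T_{N,i})$ for some $j_0\in[i,N-i]$. Then $T_{N,i}\cup\{j_0\}\subseteq\zscl_{G,i}(T_{N,i})$, so by monotonicity and idempotence of $\zscl_{G,i}$ together with Observation~\ref{obs:Lnd},
\[
\ol{L}_{N,i}(T_{N,i}\cup\{j_0\})\subseteq\zscl_{G,i}(T_{N,i}\cup\{j_0\})\subseteq\zscl_{G,i}(\zscl_{G,i}(T_{N,i}))=\zscl_{G,i}(T_{N,i}).
\]
But $T_{N,i}\cup\{j_0\}$ has $2i+1\ge i+1$ elements, and its $(i+1)$-st smallest element is $j_0$ (the first $i$ are $0,1,\dots,i-1$, and $i-1<j_0<N-i+1$), so the definition of $L_{N,i}$ gives
\[
L_{N,i}(T_{N,i}\cup\{j_0\})=[0,j_0]\cup(T_{N,i}\cup\{j_0\})\cup[j_0,N]=[0,N],
\]
hence $\zscl_{G,i}(T_{N,i})=[0,N]$. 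Consequently, if $\zscl_{G,i}(T_{N,i})\ne[0,N]$ then it contains no element of $[i,N-i]$, i.e.\ it equals $T_{N,i}$; so the whole proof reduces to showing $\zscl_{G,i}(T_{N,i})\ne[0,N]$. By Lemma~\ref{lem:poly-cover-defn} this is equivalent to $\PC_G(T_{N,i})\le i$, i.e.\ to exhibiting a single $P\in\mb{R}[\mb{X}]$ with $\deg P\le i$, $P|_{\ul{T_{N,i}}}=0$, and $P$ not identically zero on $G$ — equivalently, $\dim V_i(\ul{T_{N,i}})<\dim V_i(G)=\sum_{t=0}^i\sqbinom{G}{t}$.

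Constructing this polynomial is the heart of the matter and the only place where strict unimodality enters. A convenient framework: the polynomials $\prod_{\ell\in[n]}\prod_{0\le s<\beta_\ell}(X_\ell-s)$, $\beta\in\ul{[0,i-1]}$, have a triangular evaluation matrix on the down-set $\ul{[0,i-1]}$, so $V_{i-1}(G)$ (hence $V_i(G)$) restricts onto all functions on $\ul{[0,i-1]}$, and the space of degree-$\le i$ polynomials vanishing on $\ul{[0,i-1]}$ therefore has dimension exactly $\sqbinom{G}{i}$. One then needs a nonzero member of this space that also vanishes on $\ul{[N-i+1,N]}$, and it is precisely the \SUt\ inequality $\sqbinom{G}{i}>\sqbinom{G}{i-1}$ that makes this possible — paralleling the count one gets in the Boolean-cube case from the Bernasconi–Egidi formula (Theorem~\ref{thm:BE-hilbert}). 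To make this explicit I would produce a witness directly: in the symmetric case $k_1=\cdots=k_n$ the relabeling $x\mapsto (k-1)-x$ realizes weight-complementation, so a Vandermonde-type product of linear forms $X_\ell-X_{\ell'}$ over a carefully chosen family of coordinate pairs already vanishes on $\ul{[0,i-1]}$ and, by this symmetry, on $\ul{[N-i+1,N]}$ as well (the paper's example $(X_1-X_2)(X_2-X_3)(X_3-X_1)$ on $[0,2]^3$ is exactly the case $i=3$); for a general \SUt\ grid a more elaborate product, mixing such forms with ordinary hyperplanes, seems to be needed. Producing such a polynomial uniformly over all \SUt\ grids while honouring the degree bound $i$ is the main obstacle; everything else is routine bookkeeping with the closure operators $\zscl_{G,i}$ and $\ol{L}_{N,i}$.
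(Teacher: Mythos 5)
Your reduction to the case $d=i$ is correct, and your dichotomy argument (that $\zscl_{G,i}(T_{N,i})$ is either $T_{N,i}$ or $[0,N]$) is a clean consolidation of what the paper proves in three separate steps for $i$, $N-i$, and the intermediate $j$. The gap is in the remaining — and only essential — step: proving $\zscl_{G,i}(T_{N,i})\ne[0,N]$, i.e.\ producing a nonzero polynomial of degree $\le i$ vanishing on $\ul{T_{N,i}}$. Your sketched dimension count does not close: the space $W$ of degree-$\le i$ polynomial functions on $G$ vanishing on $\ul{[0,i-1]}$ has dimension $\sqbinom{G}{i}$, and imposing vanishing on $\ul{[N-i+1,N]}$ amounts to $\sqbinom{G}{[0,i-1]}=\sum_{t=0}^{i-1}\sqbinom{G}{t}$ additional constraints — a quantity that is generically \emph{much larger} than $\sqbinom{G}{i}$ (already for $\{0,1\}^6$ and $i=3$: $\binom{6}{3}=20$ versus $1+6+15=22$). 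The \SUt\ inequality $\sqbinom{G}{i}>\sqbinom{G}{i-1}$ by itself does not make these constraints dependent, and you explicitly concede that an explicit witness ``seems to be needed'' but is not produced for a general \SUt\ grid. So as written the proof is incomplete.

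The missing idea in the paper is a reduction \emph{before} counting: by the Closure Builder Lemma, $\zscl_{G,i}(T_{N,i})=\zscl_{G,i}(T'_{N,i})$ where $T'_{N,i}=[0,i-1]\cup\{N-i+1\}$ — one keeps only the single layer $N-i+1$ from the top block, since $|T'_{N,i}|=i+1\ge i+1$ already regenerates $[N-i+1,N]$ under the closure. This shrinks the constraint count to $\sqbinom{G}{[0,i-1]}+\sqbinom{G}{N-i+1}=\sqbinom{G}{[0,i-1]}+\sqbinom{G}{i-1}$, which \SUt\ makes strictly smaller than $\sqbinom{G}{[0,i]}=\sqbinom{G}{[0,i-1]}+\sqbinom{G}{i}$, the number of admissible monomials. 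The Combinatorial Nullstellensatz (Theorem~\ref{thm:CN}) then upgrades the nonzero coefficient vector to a function not vanishing identically on $G$. Once you insert this reduction, your dichotomy argument (or the paper's three-step version) finishes the proof.
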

	\begin{proof}
		Note that
		\[
		T_{N,\lfloor N/2\rfloor}=\begin{cases}
			[0,N]\setminus\{N/2\}&\tx{if }N\tx{ is even},\\
			[0,N]\setminus\{\lfloor N/2\rfloor,\lfloor N/2\rfloor+1\}&\tx{if }N\tx{ is odd}.
		\end{cases}
		\]
		So if \(i\ge\lfloor N/2\rfloor+1\), then obviously \(T_{N,i}=[0,N]\), and so we are trivially done.  So assume \(i\le\lfloor N/2\rfloor\).  If \(i>d\), then \([0,d]\subseteq T_{N,i}\), and so by Closure Builder Lemma~\ref{lem:poly-clo-builder}, we conclude that \(\zcl_{G,d}(T_{N,i})=[0,N]\).
		
		Now also assume \(i\le d\).  Clearly \(T_{N,i}\subseteq\zscl_{G,d}(T_{N,i})\).  Let \(T'_{N,i}=[0,i-1]\cup\{N-i+1\}\).  Again by Closure Builder Lemma~\ref{lem:poly-clo-builder} and Proposition~\ref{pro:Z*-prop} (b), we get \(\zscl_{G,i}(T_{N,i})=\zscl_{G,i}(T'_{N,i})\).  Also by Proposition~\ref{pro:Z*-prop} (c), we have \(\zscl_{G,d}(T_{N,i})\subseteq\zscl_{G,i}(T_{N,i})\).  So it is enough to prove that \(\zscl_{G,i}(T_{N,i})=T_{N,i}\).
		
		Suppose \(i\in\zscl_{G,i}(T_{N,i})\).  This means \([0,i]\subseteq\zscl_{G,i}(T_{N,i})\).  But then by Lemma~\ref{lem:poly-clo-builder}, and Proposition~\ref{pro:Z*-prop} (b) and (c), we have \([i,N]\subseteq\zscl_{G,i}([0,i])\subseteq\zscl_{G,i}(\zscl_{G,i}(T_{N,i}))\).  This means \(\zscl_{G,i}(T_{N,i})=[0,N]\), that is, \(\zcl_{G,i}(\ul{T_{N,i}})=G\).  Thus we get \(\zcl_{G,i}(\ul{T'_{N,i}})=G\).  Now consider the linear system
		\[
		\sum_{\gamma\in\ul{[0,i]}}c_\gamma\mb{X}^\gamma(a)=0,\quad a\in\ul{T'_{N,i}}.
		\]
		Note that in this system, the variables are \(c_\gamma,\,\gamma\in\ul{[0,i]}\), and the constraints are indexed by \(a\in\ul{T'_{N,i}}\).  So the number of variables is \(|\ul{[0,i]}|=\sqbinom{G}{[0,i]}=\sqbinom{G}{[0,i-1]}+\sqbinom{G}{i}\), and the number of constraints is \(|\ul{T'_{N,i}}|=|\ul{[0,i-1]}|+|\ul{N-i+1}|=\sqbinom{G}{[0,i-1]}+\sqbinom{G}{N-i+1}\).  Since \(G\) is \SUt, we have \(\sqbinom{G}{i}>\sqbinom{G}{i-1}=\sqbinom{G}{N-i+1}\), and therefore this system has a nontrivial solution.  Thus, there exists a nonzero polynomial \(P(\mb{X})\in\spn\{\mb{X}^\gamma:\gamma\in\ul{[0,i]}\}\) such that \(P|_{\ul{T_{N,i}'}}=0\).  By the Combinatorial Nullstellensatz (Theorem~\ref{thm:CN}), we then get \(P\ne0\) in \(V(G)\), that is, \(P(a)\ne0\) for some \(a\in G\).  (Also, obviously by the spanning set description, we get \(\deg P\le i\).)  This means \(a\not\in\zcl_{G,i}(\ul{T'_{N,i}})\), which implies \(\wgt(a)\not\in\zscl_{G,i}(T'_{N,i})\).  So \(\zscl_{G,i}(T'_{N,i})\ne G\), a contradiction.
		
		So we get \(i\not\in\zscl_{G,i}(T_{N,i})\).  Similarly, we can conclude that \(N-i\not\in\zscl_{G,i}(T_{N,i})\).  Now consider any \(j\in[i+1,N-i-1]\).  If \(j\in\zscl_{G,i}(T_{N,i})\), then by the Closure Builder Lemma~\ref{lem:poly-clo-builder} and Observation~\ref{obs:Lnd}, we get \([0,j]\subseteq\zscl_{G,i}(T_{N,i})\), which implies \(i\in\zscl_{G,i}(T_{N,i})\), a contradiction.  Thus we have proven that \([i,N-i]\cap\zscl_{G,i}(T_{N,i})=\emptyset\), that is, \(\zscl_{G,i}(T_{N,i})=T_{N,i}\).  This completes the proof.
	\end{proof}

	\subsection{The main theorem}\label{subsec:combi-charac}
	
	We will now prove Theorem~\ref{thm:main-poly-closure-op}, our combinatorial characterization of finite-degreee Z*-closures of all weight-determined sets in an \SUt\,grid.  We restate Theorem~\ref{thm:main-poly-closure-op} for convenience.
	\begin{reptheorem}[\ref{thm:main-poly-closure-op}]
		Let \(G\) be an \SUt\,grid.  For every \(d\in[0,N]\) and \(E\subseteq[0,N]\),
		\[
		\zscl_{G,d}(E)=\ol{L}_{N,d}(E).
		\]
	\end{reptheorem}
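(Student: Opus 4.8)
The inclusion \(\ol{L}_{N,d}(E)\subseteq\zscl_{G,d}(E)\) is recorded in Observation~\ref{obs:Lnd}, so the plan is to prove the reverse inclusion. Write \(F=\ol{L}_{N,d}(E)\). As the chain \(E\subseteq L_{N,d}(E)\subseteq L_{N,d}^2(E)\subseteq\cdots\) is increasing inside the finite set \([0,N]\), it stabilizes, and hence \(F\) is a fixed point of \(L_{N,d}\), i.e.\ \(L_{N,d}(F)=F\). Since \(E\subseteq F\) and \(\zscl_{G,d}\) is monotone (Proposition~\ref{pro:Z*-prop}(b)), it is enough to show that \(\zscl_{G,d}(F)=F\) for every \(L_{N,d}\)-fixed point \(F\); as \(\zscl_{G,d}\) is extensive, only \(\zscl_{G,d}(F)\subseteq F\) needs proof.

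First I would dispose of the easy cases. If \(|F|\le d\), then \(\zscl_{G,d}(F)=F\) by Proposition~\ref{prop:simple-clo}; and if \(F=[0,N]\) there is nothing to prove. So assume \(|F|=s\ge d+1\), write \(F=\{t_1<\cdots<t_s\}\), and note that \(L_{N,d}(F)=F\) forces \([0,t_{s-d}]\subseteq F\) and \([t_{d+1},N]\subseteq F\). If \(t_{d+1}\le t_{s-d}+1\) these two segments already cover \([0,N]\), contradicting \(F\ne[0,N]\); hence \(t_{s-d}<t_{d+1}-1\). Now \([0,t_{s-d}]\) is an initial segment of \(F\), so it consists of the \(s-d\) smallest elements of \(F\), giving \(t_{s-d}=s-d-1=:a\); symmetrically \([t_{d+1},N]\) is a terminal segment of \(F\), so \(t_{d+1}=N-a\). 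Consequently
\[
F = T_{N,a+1}\cup M,\qquad M := F\cap(a,N-a)\subseteq[a+1,N-a-1],\qquad |M| = s-2(a+1) = d-a-1,
\]
and in particular \(a+1\le d\) and \(|M|=d-a-1\ge0\).

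Finally, I would produce a witness polynomial for every \(j\in[0,N]\setminus F\). Since \(T_{N,a+1}\subseteq F\) and \(M\subseteq F\), we have \(j\notin T_{N,a+1}\) and \(j\notin M\). By Lemma~\ref{lem:T-clo-poly} applied with degree \(a+1\), \(\zscl_{G,a+1}(T_{N,a+1})=T_{N,a+1}\), so \(j\notin\zscl_{G,a+1}(T_{N,a+1})\); that is, there are \(x_0\in\ul{j}\) and \(Q(\mb{X})\in\mb{R}[\mb{X}]\) with \(\deg Q\le a+1\), \(Q|_{\ul{T_{N,a+1}}}=0\), and \(Q(x_0)\ne0\). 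Set
\[
P(\mb{X}) = Q(\mb{X})\prod_{m\in M}\bigg(\sum_{i=1}^nX_i-m\bigg).
\]
Then \(\deg P\le(a+1)+|M|=(a+1)+(d-a-1)=d\); the factor \(Q\) vanishes on \(\ul{T_{N,a+1}}\) and each factor \(\sum_iX_i-m\) vanishes on \(\ul{m}\), so \(P|_{\ul{F}}=0\); and \(P(x_0)=Q(x_0)\prod_{m\in M}(j-m)\ne0\) because \(j\notin M\). Hence \(\ul{j}\not\subseteq\zcl_{G,d}(\ul{F})\), i.e.\ \(j\notin\zscl_{G,d}(F)\). As \(j\) was arbitrary, \(\zscl_{G,d}(F)\subseteq F\), completing the argument.

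I expect the main obstacle to be the structural analysis of the middle paragraph: extracting the decomposition \(F=T_{N,a+1}\cup M\) from the fixed-point equation \(L_{N,d}(F)=F\), handling the degenerate situations in which the initial and terminal segments meet (forcing \(F=[0,N]\)), and doing the bookkeeping that makes \(|M|=d-a-1\), so that the polynomial \(P\) lands at degree exactly \(d\). The reductions at the start and the polynomial construction are then routine given Observation~\ref{obs:Lnd}, Propositions~\ref{prop:simple-clo} and~\ref{pro:Z*-prop}, and Lemma~\ref{lem:T-clo-poly}.
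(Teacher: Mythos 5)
Your proof is correct and uses essentially the same ingredients as the paper's: Observation~\ref{obs:Lnd} for the easy inclusion, Lemma~\ref{lem:T-clo-poly} for the closure of \(T_{N,i}\), and the same multiplicative witness polynomial \(P=Q\cdot\prod_{m\in M}\big(\sum_{i}X_i-m\big)\). Your reduction to an \(L_{N,d}\)-fixed point \(F\) with the explicit decomposition \(F=T_{N,a+1}\sqcup M\), \(a=|F|-d-1\), \(|M|=d-a-1\) is a slightly tidier packaging than the paper's choice of \(i_0=\max\{i:T_{N,i}\subseteq\ol{L}_{N,d}(E)\}\) followed by the separate contradiction argument bounding \(|E\cap[i_0,N-i_0]|\le d-i_0\), but the underlying route and key lemma are identical.
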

	\begin{proof}
		Fix a \(d\in[0,N]\) and consider any \(E\subseteq[0,N]\).  If \(|E|\le d\), then by Proposition~\ref{prop:simple-clo} and the definition of \(\ol{L}_{N,d}\), we have \(\ol{L}_{N,d}(E)=\zscl_{G,d}(E)=E\).
		
		So now assume that \(|E|\ge d+1\).  By the Closure Builder Lemma~\ref{lem:poly-clo-builder} and Observation~\ref{obs:Lnd}, we already have \(\ol{L}_{N,d}(E)\subseteq\zscl_{G,d}(E)\).  Now consider any \(j\not\in\ol{L}_{N,d}(E)\).  In particular, \(\ol{L}_{N,d}(E)\ne[0,N]\).  We will prove that \(j\not\in\zscl_{G,d}(E)\).  Let
		\[
		i_0=\max\{i\in[0,\lfloor N/2\rfloor]:T_{N,i}\subseteq\ol{L}_{N,d}(E)\}.
		\]
		Note that
		\[
		T_{N,\lfloor N/2\rfloor}=\begin{cases}
			[0,N]\setminus\{N/2\}&\tx{if }N\tx{ is even},\\
			[0,N]\setminus\{\lfloor N/2\rfloor,\lfloor N/2\rfloor+1\}&\tx{if }N\tx{ is odd}.
		\end{cases}
		\]
		So clearly \(T_{N,i_0}=[0,i_0-1]\cup[N-i_0+1,N]\ne[0,N]\).  If \(i_0\ge d+1\), then there exists \(k\in\mb{Z}^+\) such that \([0,d]\subseteq[0,i_0-1]\subseteq L_{N,d}^k(E)\).  Then by Observation~\ref{obs:Lnd}, we have \(L_{N,d}^{k+1}(E)=[0,N]\), that is, \(\ol{L}_{N,d}(E)=[0,N]\), a contradiction.  So we have \(i_0\le d\).
		
		By definition of \(i_0\), it is clear that either \(i_0\not\in\ol{L}_{N,d}(E)\) or \(N-i_0\not\in\ol{L}_{N,d}(E)\).  Without loss of generality, suppose \(i_0\not\in\ol{L}_{N,d}(E)\).  Now we have \(j\in[i_0,N-i_0]\).  By Lemma~\ref{lem:T-clo-poly}, \(\zscl_{G,i_0}(T_{N,i_0})=T_{N,i_0}\), and so \(j\not\in\zscl_{G,i_0}(T_{N,i_0})\).  This means \(\ul{j}\not\subseteq\zcl_{G,i_0}(T_{N,i_0})\).  So there exists \(x_0\in\ul{j}\) and a nonzero \(P(\mb{X})\in\mb{R}[\mb{X}]\) satisfying \(\deg P\le i_0,\,P|_{\ul{T_{N,i_0}}}=0\) and \(P(x_0)=1\).
		
		Let us now show that \(|E\cap[i_0,N-i_0]|\le d-i_0\).  On the contrary, suppose \(|E\cap[i_0,N-i_0]|\ge d-i_0+1\).  Then
		\[
		\big|[N-i_0+1,N]\cup\big(E\cap[i_0,N-i_0]\big)\big|\ge d+1.
		\]
		Let \(E^+=[N-i_0+1,N]\cup\big(E\cap[i_0,N-i_0]\big)\).  Note that since \(T_{N,i_0}\subseteq\ol{L}_{N,d}(E)\), we have \(E^+\subseteq\ol{L}_{N,d}(E)\).  Since \(|E^+|\ge d+1\), by definition of \(\ol{L}_{N,d}\), we have \([0,\min E^+]\subseteq\ol{L}_{N,d}(E)\).  But we have \(i_0\in[0,\min E^+]\).  So in particular, we have \(i_0\in\ol{L}_{N,d}(E)\), which is a contradiction since we assumed \(i_0\not\in\ol{L}_{N,d}(E)\).  Hence \(|E\cap[i_0,N-i_0]|\le d-i_0\).
		
		Now define
		\[
		Q(\mb{X})=\prod_{t\in E\cap[i_0,N-i_0]}\bigg(\sum_{i\in[0,n]}X_i-t\bigg).
		\]
		Then \(\deg Q\le d-i_0\) and \(Q(x_0)\ne0\).  This gives \(\deg PQ\le d,\,PQ|_{\ul{E}}=0\) and \(PQ(x_0)\ne0\), that is, \(PQ|_{\ul{j}}\ne0\).  So \(\ul{j}\not\subseteq\zcl_{G,d}(E)\), which implies \(j\not\in\zscl_{G,d}(E)\).  This completes the proof.
	\end{proof}

	\subsection{Computing the finite-degree Z*-closures efficiently}\label{subsec:clo-eff}
	
	The characterization in Theorem~\ref{thm:main-poly-closure-op} gives a simple algorithm to compute \(\zscl_{G,d}(E)\), for any \(d\in[0,N],\,E\subseteq[0,N]\), in time \(O(N)\) (linear time).  We only consider the complexity of computing the finite-degree Z*-closures modulo the bit complexity of representing the weight-determined sets in the uniform grid \(G\); accomodating the bit complexity will only multiply our bound by a \(\poly(\log N)\) factor.  We will now describe the algorithm.
	
	We will need to consider \emph{shifted} versions of the set operators \(L_{N,d}\) and \(\ol{L}_{N,d}\), defined for subsets of intervals other than \([0,N]\).  Fix any \(a,b\in\mb{Z},\,a\le b\).  For every \(d\in[0,b-a]\), define \(L_{[a,b],d}:2^{[a,b]}\to2^{[a,b]}\) and \(\ol{L}_{[a,b],d}:2^{[a,b]}\to2^{[a,b]}\) as
	\[
	L_{[a,b],d}(E)=L_{b-a,d}(E-a)+a\quad\tx{and}\quad\ol{L}_{[a,b],d}(E)=\ol{L}_{b-a,d}(E-a)+a,\quad\tx{for all }E\subseteq[a,b].
	\]
	Immediately, we have the following analogue of Observation~\ref{obs:Lnd}.
	\begin{observation}\label{obs:Lnd-general}
		Let \(a,b\in\mb{Z},\,a\le b\).  For any \(d\in[0,b-a]\) and \(E=\{t_1<\cdots<t_s\}\subseteq[a,b]\), if \(|E|\ge d+1\), then
		\[
		L_{[a,b],d}(E)=[a,t_{s-d}]\cup E\cup[t_{d+1},b]=\bigcup_{A\in\binom{E}{d+1}}\big([a,\min A]\cup A\cup[\max A,b]\big).
		\]
	\end{observation}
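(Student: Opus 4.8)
The plan is to deduce the claim directly from Observation~\ref{obs:Lnd} by unwinding the definition $L_{[a,b],d}(E)=L_{b-a,d}(E-a)+a$, that is, by the affine change of variables $x\mapsto x-a$ which identifies $[a,b]$ with $[0,b-a]$.

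First I would note that $E-a=\{t_1-a<\cdots<t_s-a\}$ is a subset of $[0,b-a]$ of size $s$ (indeed $0\le t_1-a$ since $a\le t_1=\min E$, and $t_s-a\le b-a$ since $t_s=\max E\le b$), and that $d\in[0,b-a]$ lies in the admissible range of the operator $L_{b-a,d}$. Then I would apply the combinatorial description of $L_{b-a,d}(E-a)$ supplied by Observation~\ref{obs:Lnd}; although that observation is phrased for the interval $[0,N]$, this part of it is purely a set identity about the operators $L_{M,d}$ and holds verbatim for $[0,M]$ with any $M\in\mb{Z}^+$, here $M=b-a$. This produces exactly the two cases $s\le d$ and $s\ge d+1$ appearing in the statement.

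If $s\le d$, then $L_{b-a,d}(E-a)=E-a$, hence $L_{[a,b],d}(E)=(E-a)+a=E$. If $s\ge d+1$, then Observation~\ref{obs:Lnd} gives
\[
L_{b-a,d}(E-a)=[0,\,t_{s-d}-a]\cup(E-a)\cup[t_{d+1}-a,\,b-a]=\bigcup_{A'\in\binom{E-a}{d+1}}\big([0,\min A']\cup A'\cup[\max A',\,b-a]\big),
\]
and I would conclude by translating everything back by $+a$: a translation sends an integer interval $[u,v]$ to $[u+a,v+a]$, so the middle expression becomes $[a,t_{s-d}]\cup E\cup[t_{d+1},b]$; and since $A'\mapsto A'+a$ is a bijection from $\binom{E-a}{d+1}$ onto $\binom{E}{d+1}$ commuting with $\min$ and $\max$, the union expression becomes $\bigcup_{A\in\binom{E}{d+1}}\big([a,\min A]\cup A\cup[\max A,b]\big)$. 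Both sides therefore equal $L_{[a,b],d}(E)$, which is the assertion.

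There is essentially no real obstacle here: the whole argument is bookkeeping for an affine shift. The only points needing (routine) verification are that $E-a$ indeed lands inside $[0,b-a]$, that forming integer intervals commutes with translation, and that $A'\mapsto A'+a$ is a size-preserving, $\min$/$\max$-commuting bijection on $(d+1)$-element subsets. I record this observation only because the shifted operators $L_{[a,b],d}$ are exactly the ones on which the linear-time algorithm for computing $\ol{L}_{N,d}$ will recurse.
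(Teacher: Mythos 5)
Your proof is correct and follows the route the paper intends: the paper gives no explicit argument, stating only that the observation follows ``immediately'' from the definition $L_{[a,b],d}(E)=L_{b-a,d}(E-a)+a$ together with Observation~\ref{obs:Lnd}, and your unwinding via the shift $x\mapsto x-a$ is precisely that. The bookkeeping you carry out (translation commutes with forming integer intervals, taking $(d+1)$-subsets, and taking $\min$/$\max$) is the routine verification the paper leaves to the reader.
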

	The following properties of the above set operators will enable us to compute them in linear time.
	\begin{proposition}\label{prop:Lnd-general-prop}
		Let \(a,b\in\mb{Z},\,a\le b\).
		\begin{enumerate}[(a)]
			\item  \(\ol{L}_{[a,b],0}(\emptyset)=\emptyset\), and \(\ol{L}_{[a,b],0}(E)=[a,b]\), for all \(\emptyset\ne E\subseteq[a,b]\).
			\item  \(\ol{L}_{[a,b],b-a}(E)=E\), for all \(E\subseteq[a,b]\).
			\item  If \(b-a\ge 2\), then for any \(d\in[b-a]\),
			\[
			\ol{L}_{[a,b],d}(E)=\begin{cases}
				E&\tx{if }|E|\le d,\\
				\{a,b\}\sqcup\ol{L}_{[a+1,b-1],d-1}(E\setminus\{a,b\})&\tx{if }|E|\ge d+1.
			\end{cases}
			\]
		\end{enumerate}
	\end{proposition}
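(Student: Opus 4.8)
The plan is to take the three parts in order, the first two being one-step verifications and the third carrying the content. For part (a), with $d=0$: if $E=\emptyset$ then $L_{[a,b],0}(\emptyset)=\emptyset$ is a fixed point, so $\ol{L}_{[a,b],0}(\emptyset)=\emptyset$; if $E=\{t_1<\cdots<t_s\}\neq\emptyset$ then $s\ge 1=d+1$, and Observation~\ref{obs:Lnd-general} gives $L_{[a,b],0}(E)=[a,t_s]\cup E\cup[t_1,b]=[a,b]$ (the two intervals overlap, since $t_1\le t_s$), which is a fixed point, so $\ol{L}_{[a,b],0}(E)=[a,b]$. For part (b), with $d=b-a$: every $E\subseteq[a,b]$ has $|E|\le b-a+1=d+1$, and either $|E|\le d$, so $E$ is a fixed point of $L_{[a,b],d}$ by definition, or $|E|=d+1$, so $E=[a,b]$, which is again a fixed point by Observation~\ref{obs:Lnd-general}; either way $\ol{L}_{[a,b],b-a}(E)=E$.

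For part (c), the case $|E|\le d$ is immediate, as then $E$ is a fixed point of $L_{[a,b],d}$; so assume $|E|\ge d+1$ (and $d\le b-a-1$, since for $d=b-a$ the hypothesis $|E|\ge d+1$ forces $E=[a,b]$, a case already covered by part (b)). The key step will be a \emph{peeling identity}: for every $F$ with $\{a,b\}\subseteq F\subseteq[a,b]$ one has $\{a,b\}\subseteq L_{[a,b],d}(F)$ and
\[
L_{[a,b],d}(F)=\{a,b\}\sqcup L_{[a+1,b-1],d-1}\big(F\setminus\{a,b\}\big).
\]
I would prove this by a direct index count: writing $F\setminus\{a,b\}=\{u_1<\cdots<u_r\}$, so $F$ has $s=r+2$ elements with least element $a$ and greatest element $b$, one compares the left and right expansion intervals $[a,t_{s-d}]$ and $[t_{d+1},b]$ from Observation~\ref{obs:Lnd-general} on the left-hand side with the expansions $[a+1,u_{r-(d-1)}]$ and $[u_{(d-1)+1},b-1]$ on the right-hand side; adjoining $a$ and $b$ shifts every rank by exactly one, which is compensated precisely by lowering the degree from $d$ to $d-1$ and shrinking $[a,b]$ to $[a+1,b-1]$, with the boundary case $r=d-1$ (both sides equal $F$) checked separately. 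I expect this bookkeeping to be the main — though essentially routine — obstacle.

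Granting the peeling identity, I would finish as follows. Since $|E|\ge d+1$, Observation~\ref{obs:Lnd-general} gives $\{a,b\}\subseteq F_1:=L_{[a,b],d}(E)$; and since the iterates $L_{[a,b],d}^k(F_1)$ form an increasing chain, each of them contains $\{a,b\}$, so the map $F\mapsto F\setminus\{a,b\}$ intertwines $L_{[a,b],d}$ on $\{F:\{a,b\}\subseteq F\}$ with $L_{[a+1,b-1],d-1}$. Using $E\subseteq F_1$, hence $\ol{L}_{[a,b],d}(E)=\bigcup_{k\ge 0}L_{[a,b],d}^k(F_1)$, this intertwining yields
\[
\ol{L}_{[a,b],d}(E)=\{a,b\}\sqcup\ol{L}_{[a+1,b-1],d-1}\big(F_1\setminus\{a,b\}\big).
\]
It remains to replace $F_1\setminus\{a,b\}$ by $E\setminus\{a,b\}$. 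One inclusion follows from $E\setminus\{a,b\}\subseteq F_1\setminus\{a,b\}$ and monotonicity of $\ol{L}_{[a+1,b-1],d-1}$. For the other, apply the peeling identity to $E_1:=E\cup\{a,b\}$ (which has $|E_1|\ge d+1$ and $E_1\setminus\{a,b\}=E\setminus\{a,b\}$) and use monotonicity of $L_{[a,b],d}$: $F_1=L_{[a,b],d}(E)\subseteq L_{[a,b],d}(E_1)=\{a,b\}\sqcup L_{[a+1,b-1],d-1}(E\setminus\{a,b\})$, whence $F_1\setminus\{a,b\}\subseteq L_{[a+1,b-1],d-1}(E\setminus\{a,b\})\subseteq\ol{L}_{[a+1,b-1],d-1}(E\setminus\{a,b\})$; idempotency of $\ol{L}_{[a+1,b-1],d-1}$ then gives the equality. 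The auxiliary facts used here — that $L_{[a,b],d}$ is monotone and extensive, and that $\ol{L}_{[a+1,b-1],d-1}$ is a closure operator — are immediate from the union-over-$(d+1)$-subsets description in Observation~\ref{obs:Lnd-general} (or, for the closure operator, from Theorem~\ref{thm:main-poly-closure-op} and Proposition~\ref{pro:Z*-prop}).
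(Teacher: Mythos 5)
Your proof is correct and rests on the same key observation as the paper's: the ``peeling identity'' $L_{[a,b],d}(F)=\{a,b\}\sqcup L_{[a+1,b-1],d-1}(F\setminus\{a,b\})$ for sets $F\supseteq\{a,b\}$, iterated along the chain. The paper organizes the bookkeeping a bit more directly: instead of iterating from $F_1=L_{[a,b],d}(E)$ and then doing a two-inclusion argument to replace $F_1\setminus\{a,b\}$ by $E\setminus\{a,b\}$, it simply replaces $E$ by $E'=E\cup\{a,b\}$ at the outset (which leaves $\ol{L}_{[a,b],d}$ unchanged) and proves $L^k_{[a,b],d}(E')=\{a,b\}\sqcup L^k_{[a+1,b-1],d-1}(E'\setminus\{a,b\})$ for all $k$ by one induction; your route through $F_1$ and the closure-operator properties of $\ol{L}_{[a+1,b-1],d-1}$ achieves the same thing but at the cost of invoking extra machinery.
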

	\begin{proof}
		Items (a) and (b) are evident from the definitions.  For Item (c), the claim is clear if \(|E|\le d\), again by the definitions.
		
		So now suppose \(|E|\ge d+1\).  It is then immediate that \(\{a,b\}\subseteq\ol{L}_{[a,b],d}(E)\) and so \(\ol{L}_{[a,b],d}(E)=\ol{L}_{[a,b],d}(\{a,b\}\cup E)\).  So let \(E'=\{a,b\}\cup E\).  We only need to show that \(\ol{L}_{[a,b],d}(E')=\{a,b\}\sqcup\ol{L}_{[a+1,b-1],d-1}(E'\setminus\{a,b\})\), since \(\ol{L}_{[a,b],d}(E')=\ol{L}_{[a,b],d}(E)\) and \(E'\setminus\{a,b\}=E\setminus\{a,b\}\).
		
		If \(|E'|=d+1\), then clearly, \(\ol{L}_{[a,b],d}(E')=E'=\{a,b\}\sqcup\ol{L}_{[a+1,b-1],d-1}(E'\setminus\{a,b\})\).
		
		Now assume \(|E'|\ge d+2\).  It is enough to show that \(L_{[a,b],d}^k(E')=\{a,b\}\sqcup L_{[a+1,b-1],d-1}^k(E'\setminus\{a,b\})\), for all \(k\in\mb{N}\).  We will show this by induction on \(k\).  The case \(k=0\) is obvious, since \(L_{[a,b],d}^0\) is the identity operator.  Now suppose the claim is true for some \(k\in\mb{N}\).  Let \(L_{[a,b],d}^k(E')=\{t_1<\ldots<t_s\}\).  So we get
		\begin{align}
			L_{[a,b],d}^{k+1}(E')&=[a,t_{s-d}]\cup L_{[a,b],d}^k(E')\cup[t_{d+1},b]=[a,t_{s-d}]\cup L_{[a+1,b-1],d-1}^k(E'\setminus\{a,b\})\cup[t_{d+1},b],\label{eq:first}
		\end{align}
		where the first equality is by the definition of \(L_{[a,b],d}\), and the second equality is by the induction hypothesis.  But we already have \(t_1=a,\,t_s=b\), and \(L_{[a+1,b-1],d-1}^k(E'\setminus\{a,b\})=\{t_2<\cdots<t_{s-1}\}\).  So
		\begin{align}
			L_{[a+1,b-1],d-1}^{k+1}(E'\setminus\{a,b\})=[a+1,t_{s-d}]\cup L_{[a+1,b-1],d-1}^k(E'\setminus\{a,b\})\cup[t_{d+1},b-1].\label{eq:second}
		\end{align}
		So from (\ref{eq:first}) and (\ref{eq:second}), we get
		\[
		L_{[a,b],d}^{k+1}(E')=\{a,b\}\sqcup L_{[a+1,b-1],d-1}^{k+1}(E'\setminus\{a,b\}).
		\]
		This completes the proof.
	\end{proof}
	
	We then get a straightforward linear time recursive algorithm to compute \(\ol{L}_{[a,b],d},\,d\in[0,b-a]\).  The base case of the recursion appeals to Proposition~\ref{prop:Lnd-general-prop} (a) and (b), and the recursive step appeals to Proposition~\ref{prop:Lnd-general-prop} (c).  The linear run-time is obvious since it is easy to see that there exists a constant \(C>0\) such that for any \(a,b\in\mb{Z},\,a\le b\) and \(d\in[0,b-a],\,E\subseteq[a,b]\), \n(i) \(|E|\) can be computed in time at most \(C(b-a)\), and further \n(ii)  appealing to Observation~\ref{obs:Lnd-general}, \(L_{[a,b],d}(E)\) can be computed in time at most \(C(b-a)\).  So for inputs \(d\in[0,N]\) and \(E\subseteq[0,N]\), the algorithm computes \(\ol{L}_{N,d}(E)\) in time \(O(N)\).
	
	We conclude by stating the pseudocode for this algorithm.  Here we assume that the input \(E\subseteq[a,b]\) is given by its indicator vector \((e_a,\ldots,e_b)\in\{0,1\}^{[a,b]}=\{0,1\}^{b-a+1}\), and \(w\coloneqq\sum_{t\in[a,b]}e_t=|E|\) is already computed in time at most \(C(b-a)\).  Note that Proposition~\ref{prop:Lnd-general-prop} is also the proof of correctness of this algorithm.
	
	\begin{algorithm}[H]\label{algo:main}
		\DontPrintSemicolon
		
		\KwInput{\(a,b\in\mb{N},\,a\le b;\,d\in[0,b-a];\,(e_a,\ldots,e_b)\in\{0,1\}^{[a,b]};\,w\coloneqq\sum_{t\in[a,b]}e_t\)}
		\KwOutput{\((f_a,\ldots,f_b)\in\{0,1\}^{[a,b]}\) such that if \(E\coloneqq\{j\in[a,b]:e_j=1\}\), then \(F\coloneqq\{j\in[a,b]:f_j=1\}\) satisfies \(F=\ol{L}_{[a,b],d}(E)\).}
		\If{\(b=a\)}
		{
			\tbf{return} \((e_a)\)
		}
		\ElseIf{\(b=a+1\)}
		{
			\If{\(d=0\)}
			{
				\If{\((e_a,e_{a+1})=(0,0)\)}
				{
					\tbf{return} \((0,0)\)
				}
				\Else
				{
					\tbf{return} \((1,1)\)
				}
			}
			\Else
			{
				\tbf{return} \((e_a,e_{a+1})\)
			}
		}
		\ElseIf{\(d=b-a\) {\normalfont\tbf{or}} \(w\le d\)}
		{
			\tbf{return} \((e_a,\ldots,e_b)\)
		}
		\Else
		{
			\tbf{return} \(\big(1,\texttt{L-bar}\big(a+1,b-1,d-1,(e_{a+1},\ldots,e_{b-1}),w-(e_a+e_b)\big),1\big)\)
		}
		\caption{\texttt{L-bar}: Computing \(\ol{L}_{[a,b],d}\)}
	\end{algorithm}

	\subsection{Solving our polynomial covering problems}\label{aubsec:solve-poly}
	
	Let us gather the work done so far to solve our polynomial covering problems (Problem~\ref{prob:main} (b)).  Let us begin by proving Lemma~\ref{lem:poly-cover-defn} that relates our polynomial covering problems with the finite-degree Z*-closures.
	\begin{proof}[Proof of Lemma~\ref{lem:poly-cover-defn}]
		Let \(G\) be a uniform grid and consider \(E\subsetneq[0,N]\).
		\begin{enumerate}[(a)]
			\item  We have the following equivalences.
			\begin{align*}
				&d'=\PC_G(E)\\
				\iff\quad&\tx{There exists }P(\mb{X})\in\mc{I}(\ul{E})_{d'}\tx{ such that }\mc{Z}_G(P)\ne G\\
				&\quad\tx{and for every }Q(\mb{X})\in\mc{I}(\ul{E})_{d'-1},\tx{ we have }\mc{Z}_G(P)=G\\
				\iff\quad&\zcl_{G,d'}(\ul{E})\ne G\tx{ and }\zcl_{G,d'-1}(\ul{E})=G\\
				\iff\quad&\zscl_{G,d'}(E)\ne[0,N]\tx{ and }\zscl_{G,d'-1}(E)=[0,N]\\
				\iff\quad&d'=\min\{d\in[0,N]:\zscl_{G,d'}(E)\ne[0,N]\}.
			\end{align*}
			
			\item  Let \(d'=\PPC_G(E)\) and \(d''=\min\{d\in[0,N]:\zscl_{G,d}(E)=E\}\).  We have the following implications.
			\begin{align*}
				&d'=\PPC_G(E)\\
				\implies\quad&\tx{There exists }P(\mb{X})\in\mc{I}(\ul{E})_{d'}\tx{ such that }\ul{j}\not\subseteq\mc{Z}_G(P),\tx{ for every }j\in[0,N]\setminus E\\
				\implies\quad&\ul{j}\not\subseteq\zcl_{G,d'}(\ul{E}),\tx{ for every }j\in[0,N]\setminus E\\
				\implies\quad&j\not\in\zscl_{G,d'}(E),\tx{ for every }j\in[0,N]\setminus E,\tx{ that is, }\zscl_{G,d'}(E)=E\\
				\implies\quad&d''\le d'.
			\end{align*}
			Now since \(d''=\min\{d\in[0,N]:\zscl_{G,d}(E)=E\}\), we have \(\ul{j}\not\subseteq\zcl_{G,d''}(\ul{E})\), for every \(j\in[0,N]\setminus E\).  For every \(j\in[0,N]\setminus E\), choose \(a^{(j)}\in\ul{j}\setminus\zcl_{G,d''}(\ul{E})\); so there exists \(P_j(\mb{X})\in\mb{R}[\mb{X}]\) such that \(\deg P_j\le d''\), \(P_j|_{\ul{E}}=0\) and \(P_j(a^{(j)})=1\).  Then we can choose \(\beta_j\in\mb{R},\,j\in[0,N]\setminus E\) such that the polynomial \(P(\mb{X})\coloneqq\sum_{j\in[0,N]\setminus E}\beta_jP_j(\mb{X})\) satisfies \(\deg P\le d''\), \(P|_{\ul{E}}=0\) and \(P(a^{(j)})\ne0\), for all \(j\in[0,N]\setminus E\).  This implies \(d'\le d''\), and therefore completes the proof.\qedhere
		\end{enumerate}
	\end{proof}
	
	Let us now proceed to prove Proposition~\ref{prop:poly-L-finer}.  We require the following lemma.
	\begin{lemma}\label{lem:admitting-lemma}
		Let \(d\in[0,N],\,i\in[0,d]\).  Then \(E\subseteq[0,N]\) is \((d,i)\)-admitting if and only if \(L_{N,d}(E)\) is \((d,i)\)-admitting.
	\end{lemma}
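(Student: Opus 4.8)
The plan is to split the biconditional and observe that one direction is free. Since $E\subseteq L_{N,d}(E)$ always holds (Observation~\ref{obs:Lnd}), and since being $(d,i)$-admitting is preserved under passing to subsets --- if $A\subseteq B$ then $A\cup T_{N,i}\subseteq B\cup T_{N,i}$ and $A\setminus T_{N,i}\subseteq B\setminus T_{N,i}$, so both defining conditions only get easier when the set shrinks --- the implication ``$L_{N,d}(E)$ is $(d,i)$-admitting $\Rightarrow$ $E$ is $(d,i)$-admitting'' is immediate. All the content is in the converse.

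For the converse, suppose $E$ is $(d,i)$-admitting. If $|E|\le d$ then $L_{N,d}(E)=E$ and there is nothing to prove; if $T_{N,i}=[0,N]$ (which happens precisely when $2i\ge N+1$) then no set is $(d,i)$-admitting, so that case is vacuous; and $i=0$ forces $|E|\le d$ for an admitting $E$. Hence I may assume $|E|=s\ge d+1$ and $2i\le N$, and write $E=\{t_1<\cdots<t_s\}$, so that $L_{N,d}(E)=[0,t_{s-d}]\cup E\cup[t_{d+1},N]$. The key claim is that, under these hypotheses, $[0,t_{s-d}]\cup[t_{d+1},N]\subseteq T_{N,i}$, whence $L_{N,d}(E)\subseteq E\cup T_{N,i}$. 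Granting this, $L_{N,d}(E)\cup T_{N,i}=E\cup T_{N,i}\ne[0,N]$, while $L_{N,d}(E)\setminus T_{N,i}\subseteq E\setminus T_{N,i}$ has size at most $d-i$; hence $L_{N,d}(E)$ is $(d,i)$-admitting.

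To establish the key claim I would count elements of $E$ in the two end-blocks of $T_{N,i}$. Since $[0,N]\setminus T_{N,i}=[i,N-i]$, the admitting hypothesis gives $|E\cap[i,N-i]|=|E\setminus T_{N,i}|\le d-i$, and trivially $|E\cap[N-i+1,N]|\le i$, so $|E\cap[i,N]|\le d$ and therefore $|E\cap[0,i-1]|\ge s-d\ge 1$. Thus the $s-d$ smallest elements $t_1,\ldots,t_{s-d}$ all lie in $[0,i-1]$, giving $[0,t_{s-d}]\subseteq[0,i-1]\subseteq T_{N,i}$. Symmetrically, $|E\cap[0,i-1]|\le i$ together with $|E\cap[i,N-i]|\le d-i$ gives $|E\cap[0,N-i]|\le d$, hence $|E\cap[N-i+1,N]|\ge s-d\ge 1$, so the $s-d$ largest elements $t_{d+1},\ldots,t_s$ all lie in $[N-i+1,N]$, giving $[t_{d+1},N]\subseteq[N-i+1,N]\subseteq T_{N,i}$. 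Combining the two inclusions proves the claim.

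The only place I expect to need care is the boundary bookkeeping: checking that the indices $s-d$ and $d+1$ (and their companions arising from the counts) are genuinely in range, so that $t_{s-d}$ and $t_{d+1}$ are well defined and land in the claimed intervals, and cleanly peeling off the degenerate cases $|E|\le d$, $i=0$, and $T_{N,i}=[0,N]$ where the equivalence holds for trivial reasons. Once these are in place, the proof is exactly the interval arithmetic above.
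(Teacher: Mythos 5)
Your proof is correct and takes essentially the same approach as the paper: the easy direction via subset-monotonicity of the admitting property, and the converse via the same two counting inequalities $|E\cap[i,N]|\le d$ and $|E\cap[0,N-i]|\le d$ (the paper derives them via $|[N-i+1,N]\cup(E\setminus T_{N,i})|\le d$ and its mirror), which pin $t_{s-d}\le i-1$ and $t_{d+1}\ge N-i+1$ and hence $L_{N,d}(E)\subseteq E\cup T_{N,i}$. The separate handling of $i=0$ and $T_{N,i}=[0,N]$ is harmless but unnecessary, since those cases are already absorbed by $|E|\le d$ and vacuity, respectively.
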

	\begin{proof}
		Fix any \(d\in[0,N],\,i\in[0,d]\) and \(E\subseteq[0,N]\).  We clearly have the following implications.
		\begin{align*}
			&L_{N,d}(E)\tx{ is \((d,i)\)-admitting}\\
			\implies\quad&L_{N,d}(E)\cup T_{N,i}\ne[0,N]\tx{ and }|L_{N,d}(E)\setminus T_{N,i}|\le d-i\\
			\implies\quad&E\cup T_{N,i}\subseteq L_{N,d}(E)\cup T_{N,i}\ne[0,N]\tx{ and }|E\setminus T_{N,i}|\le|L_{N,d}(E)\setminus T_{N,i}|\le d-i\\
			\implies\quad&E\tx{ is \((d,i)\)-admitting}.
		\end{align*}
		Conversely suppose \(E\) is \((d,i)\)-admitting.  If \(|E|\le d\), then \(L_{N,d}(E)=E\) and we are done.  So now assume \(|E|\ge d+1\).  We have \(E\cup T_{N,i}\ne[0,N]\) and \(|E\setminus T_{N,i}|\le d-i\).  This implies
		\begin{align}
			&|E\cap[0,N-i]|\le|[0,i-1]\cup(E\setminus T_{N,i})|\le d,\label{ineq:left}\\
			\tx{and}\quad&|E\cap[i,N]|\le|[N-i+1,N]\cup(E\setminus T_{N,i})|\le d.\label{ineq:right}
		\end{align}
		Enumerate \(E=\{t_1<\cdots<t_s\}\).  Then \(L_{N,d}(E)=[0,t_{s-d}]\cup E\cup[t_{d+1,N}]\), since \(|E|\ge d+1\).  Further, Inequality~(\ref{ineq:left}) implies \(t_{d+1}\ge N-i+1\), and Inequality~(\ref{ineq:right}) implies \(t_{s-d}\le i-1\).  So we get
		\begin{enumerate}[(a)]
			\item  \(L_{N,d}(E)\subseteq [0,i-1]\cup E\cup[N-i+1]=E\cup T_{N,i}\ne[0,N]\), and
			\item  \(L_{N,d}(E)\cap[i,N-i]=E\cap[i,N-i]\), which gives \(|L_{N,d}(E)\setminus T_{N,i}|=|E\setminus T_{N,i}|\le d-i\).
		\end{enumerate}
		Thus \(L_{N,d}(E)\) is \((d,i)\)-admitting.
	\end{proof}
	
	We are now ready to prove Proposition~\ref{prop:poly-L-finer}.
	\begin{proof}[Proof of Proposition~\ref{prop:poly-L-finer}]
		\begin{enumerate}[(a)]
			\item  We need to prove that \(\ol{L}_{N,d}(E)\ne[0,N]\) if and only if \(E\) is \(d\)-admitting.
			
			Suppose \(\ol{L}_{N,d}(E)\ne[0,N]\).  Let
			\[
			i_0=\max\{i\in[0,\lfloor N/2\rfloor]:T_{N,i}\subseteq\ol{L}_{N,d}(E)\}.
			\]
			If \(i_0\ge d+1\), then there exists \(k\in\mb{Z}^+\) such that \([0,d]\subseteq[0,i_0-1]\subseteq L_{N,d}^k(E)\).  Then by definition of \(L_{N,d}\) and Lemma~\ref{lem:poly-clo-builder}, we have \(L_{N,d}^{k+1}(E)=[0,N]\), thus implying \(\ol{L}_{N,d}(E)=[0,N]\), a contradiction.  So we have \(i_0\le d\).  Also, without loss of generality, let \(i_0\not\in\ol{L}_{N,d}(E)\).  Clearly \(E\cup T_{N,i_0}\subseteq\ol{L}_{N,d}(E)\ne[0,N]\).
			
			Now suppose \(|E\setminus T_{N,i_0}|\ge d-i_0+1\), then \(|E\cap[i_0,N]|\ge d+1\).  Let \(m=\min(E\cap[i_0,N])\ge i_0\).  By Observation~\ref{obs:Lnd}, we get \([0,m]\subseteq \ol{L}_{N,d}(E)\); in particular, we get \(i_0\in\ol{L}_{N,d}(E)\), which is a contradiction.  So \(|E\setminus T_{N,i_0}|\le d-i_0\).  Thus \(E\) is \((d,i_0)\)-admitting, and hence \(E\) is \(d\)-admitting.
			
			Conversely, suppose \(E\) is \(d\)-admitting.  Let \(k_0\in\mb{Z}^+\) be the least such that \(\ol{L}_{N,d}(E)=L_{N,d}^{k_0}(E)\).  So applying Lemma~\ref{lem:admitting-lemma} precisely \(k_0\) times, we conclude that \(\ol{L}_{N,d}(E)\) is \(d\)-admitting.  So there exists \(i\in[0,d]\) such that \(\ol{L}_{N,d}(E)\cup T_{N,i}\ne[0,N]\), which implies \(\ol{L}_{N,d}(E)\ne[0,N]\).
			
			\item  We need to prove that \(\ol{L}_{N,d}(E)=E\) if and only if \(T_{N,|E|-d}\subseteq E\).  Firstly, note that by definition of \(\ol{L}_{N,d}\), we have \(\ol{L}_{N,d}(E)=E\) if and only if \(L_{N,d}(E)=E\).  Secondly, note that the assertion is vacuously true if \(|E|\le d\).  So now assume \(|E|\ge d+1\).  Let \(E=\{t_1<\cdots<t_s\}\).
			
			Now suppose \(L_{N,d}(E)=E\).  Let \(E'=\{t_1,\ldots,t_{d+1}\}\).  By definition of \(L_{N,d}\), we have \([0,t_1]\cup E'\cup[t_{d+1},N]\subseteq L_{N,d}(E')\subseteq\ol{L}_{N,d}(E)=E\).  So \(\{t_{d+1},\ldots,t_s\}=[t_{d+1},N]\), which implies \(N-t_{d+1}+1=s-d\), that is, \(t_{d+1}=N-s+d+1=N-(|E|-d)+1\).  Thus \([N-(|E|-d)+1,N]\subseteq E\).  Further, let \(E''=\{t_{s-d},\ldots,t_s\}\).  Again, by definition of \(L_{N,d}\), we have \([0,t_{s-d}]\cup E''\cup[t_s,N]\subseteq L_{N,d}(E'')\subseteq\ol{L}_{N,d}(E)=E\).  So \(\{t_1,\ldots,t_{s-d}\}=[0,t_{s-d}]\), which implies \(t_{s-d}+1=s-d\), that is, \(t_{s-d}=s-d-1=(|E|-d)-1\).  Thus \([0,(|E|-d)-1]\subseteq E\).  Hence \(T_{N,|E|-d}\subseteq E\).
			
			Conversely, suppose \(T_{N,|E|-d}\subseteq E\).  So \([0,(|E|-d)-1]=\{t_1,\ldots,t_{s-d}\}\) and \([N-(|E|-d)+1,N]=\{t_{d+1},\ldots,t_s\}\), that is, \(t_{s-d}=(|E|-d)-1\) and \(t_{d+1}=N-(|E|-d)+1\).  Hence \(L_{N,d}(E)=[0,t_{s-d}]\cup E\cup[t_{d+1},N]=E\).\qedhere
		\end{enumerate}
	\end{proof}
	We have thus proved Theorem~\ref{thm:main-poly-cover}, which is our solution to Problem~\ref{prob:main} (b) for \SUt\,grids.

	\section{Finite-degree h-closures and our hyperplane covering problems}\label{sec:h-clo-hyper}
	
	We introduce another new closure operator, defined using polynomials representing hyperplane covers.  Let
	\[
	\ms{H}_n=\bigg\{\prod_{i=1}^k\ell_i(\mb{X}):k\in\mb{N};\,\ell_i(\mb{X})\in\mb{R}[\mb{X}]\tx{ and }\deg\ell_i\le1,\tx{ for all }i\in[k]\bigg\}.
	\]
	Let \(G\) be a uniform grid.  For any \(d\in[0,N]\) and \(S\subseteq G\), we define the \tsf{degree-\(d\) h-closure} of \(S\) as \(\hcl_{G,d}(S)=\mc{Z}(\mc{I}(\ms{H}_n,S)_d)\).  We will focus on the case of the Boolean cube.  It is immediate that the finite-degree h-closure of a symmetric set is symmetric, and so we will use our indentification of symmetric sets of \(\{0,1\}^n\) with subsets of \([0,n]\).
	
	Our main result in this section is a characterization of finite-degree h-closures of all symmetric sets of the Boolean cube.  In fact, we prove that these coincide with the finite-degree Z-closures (and Z*-closures).  Let us first show that Lemma~\ref{lem:poly-clo-builder} and Lemma~\ref{lem:T-clo-poly} have analogues for finite-degree hyperplane closures.
	\begin{lemma}\label{lem:main-lem-hcl}
		\begin{enumerate}[(a)]
			\item  {\normalfont(Closure Builder Lemma)}\quad  Let \(d\in[0,n]\) and \(E\subseteq[0,n]\) with \(|E|\ge d+1\).  Then
			\[
			[0,\min E]\cup[\max E,n]\subseteq\hcl_{n,d}(E).
			\]
			\item  For every \(i\in[0,N]\),
			\[
			\hcl_{n,d}(T_{n,i})=\begin{cases}
				T_{n,i}&\tx{if }i\le d,\\
				[0,n]&\tx{if }i>d.
			\end{cases}
			\]
		\end{enumerate}
	\end{lemma}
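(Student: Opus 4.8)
The plan is to dispatch part (a) and the easy case of part (b) immediately from results already available, and to spend the real effort on the case $i\le d$ of part (b), which is the only place where the hyperplane closure genuinely differs from the Z*-closure. For part (a), note that $\ms{H}_n\subseteq\mb{R}[\mb{X}]$, so $\mc{I}(\ms{H}_n,\ul{E})_d\subseteq\mc{I}(\ul{E})_d$ and hence $\zcl_{n,d}(\ul{E})\subseteq\hcl_{n,d}(\ul{E})$, and a fortiori $\zscl_{n,d}(\ul{E})\subseteq\hcl_{n,d}(\ul{E})$. Applying the Closure Builder Lemma~\ref{lem:poly-clo-builder} to the uniform grid $\{0,1\}^n$ (for which $N=n$), using the hypothesis $|E|\ge d+1$, gives $[0,\min E]\cup[\max E,n]\subseteq\zscl_{n,d}(E)\subseteq\hcl_{n,d}(E)$, which is exactly part (a).

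For part (b), first dispose of the trivial case $i\ge\lfloor n/2\rfloor+1$, in which $T_{n,i}=[0,n]$ already (as recorded in the proof of Lemma~\ref{lem:T-clo-poly}), so that $\hcl_{n,d}(T_{n,i})=[0,n]$ regardless of $d$; this is consistent with both branches. So assume $i\le\lfloor n/2\rfloor$, whence $2i\le n$. When $i>d$, Lemma~\ref{lem:T-clo-poly} (applied to $\{0,1\}^n$) gives $\zscl_{n,d}(T_{n,i})=[0,n]$, and the chain $\zscl_{n,d}(T_{n,i})\subseteq\zcl_{n,d}(T_{n,i})\subseteq\hcl_{n,d}(T_{n,i})\subseteq[0,n]$ forces $\hcl_{n,d}(T_{n,i})=[0,n]$, as claimed.

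The real content is the case $i\le d$, where I must show $\hcl_{n,d}(T_{n,i})\subseteq T_{n,i}$ (the reverse inclusion is automatic from $\hcl_{n,d}$ being a closure operator). Equivalently, for each $j\in[0,n]\setminus T_{n,i}=[i,n-i]$ I need a product of at most $d$ linear forms that vanishes on $\ul{T_{n,i}}$ but is not identically zero on $\ul{j}$. This is the one step that is \emph{not} a formal consequence of the Z*-closure statements: the degree-$i$ certificate produced in the proof of Lemma~\ref{lem:T-clo-poly} (obtained from a dimension count together with the Combinatorial Nullstellensatz, Theorem~\ref{thm:CN}) is a general polynomial, and the obvious product of "symmetric" forms $\prod_{t\in T_{n,i}}\bigl(\sum_k X_k-t\bigr)$ killing $\ul{T_{n,i}}$ has degree $2i$, which exceeds $d$ once $d<2i$. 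The plan is to exploit the Boolean-cube structure and use the single product $P(\mb{X})=\prod_{r=1}^{i}(X_{2r-1}-X_{2r})\in\ms{H}_n$, well-defined since $2i\le n$ and of degree $i\le d$. Two checks remain. First, $P$ vanishes on $\ul{T_{n,i}}$: if $x\in\{0,1\}^n$ has $\wgt(x)\le i-1$, then at most $\sum_{r=1}^{i}(x_{2r-1}+x_{2r})\le i-1$ of the $i$ disjoint pairs $\{2r-1,2r\}$ can meet $\supp(x)$, so some pair is entirely $0$ and the corresponding factor of $P$ vanishes; applying this to $1^n-x$ handles $\wgt(x)\ge n-i+1$, so $P$ vanishes on $\ul{[0,i-1]}\cup\ul{[n-i+1,n]}=\ul{T_{n,i}}$. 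Second, for each $j\in[i,n-i]$ the point $x$ with $x_{2r-1}=1,\,x_{2r}=0$ for $r\in[i]$ and exactly $j-i$ ones among coordinates $2i+1,\dots,n$ (valid since $0\le j-i\le n-2i$) has $\wgt(x)=j$ and $P(x)=1\ne0$. Then $P\in\mc{I}(\ms{H}_n,\ul{T_{n,i}})_d$ gives $\hcl_{n,d}(T_{n,i})\subseteq\mc{Z}(P)$, so $\ul{j}\not\subseteq\hcl_{n,d}(T_{n,i})$ for every $j\in[i,n-i]$, and since $\hcl_{n,d}(T_{n,i})$ is symmetric this upgrades to $j\notin\hcl_{n,d}(T_{n,i})$, i.e. $\hcl_{n,d}(T_{n,i})\subseteq T_{n,i}$, hence equality. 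The main obstacle is exactly the design of this product $P$; everything else is bookkeeping around the inequality $2i\le n$.
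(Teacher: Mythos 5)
Your proof is correct and follows essentially the same route as the paper: reduce part (a) and the $i>d$ case of (b) to the already-established Z*-closure facts, and for the genuine content (the case $i\le d$ of (b)) construct the degree-$i$ product of linear forms $P(\mb{X})=\prod_{r=1}^{i}(X_{2r-1}-X_{2r})\in\ms{H}_n$. Two points where you improve on the paper's own write-up deserve mention. First, the paper handles part (a) by saying ``repeat the proof of Lemma~\ref{lem:poly-clo-builder} with $\ms{H}_n$ in place of $\mb{R}[\mb{X}]$,'' which does work since every auxiliary factor in that argument is linear, but your observation that $\zscl_{n,d}(E)\subseteq\zcl_{n,d}(\ul{E})\subseteq\hcl_{n,d}(\ul{E})$ makes (a), and the $i>d$ branch of (b), literal corollaries of Lemma~\ref{lem:poly-clo-builder} and Lemma~\ref{lem:T-clo-poly} rather than re-runs of their proofs. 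Second, and more substantively, the paper's verification that $P$ does not vanish on $\ul{j}$ for $j\in[i,n-i]$ evaluates $P$ at $x^{(j)}=1^j0^{n-j}$ and claims $P(x^{(j)})=1$; that is actually false whenever some pair $\{2r-1,2r\}$ is monochromatic at $x^{(j)}$ (for example $i=1$, $j=2$ gives $P(110^{n-2})=0$, and for $i\ge2$ even $j=i$ fails). Your alternating witness $x_{2r-1}=1,\,x_{2r}=0$ for $r\in[i]$ with the remaining $j-i$ ones placed among coordinates $2i+1,\dots,n$ is the correct choice, since then every factor of $P$ equals $1$ and the weight is $j$; combined with symmetry of $\hcl_{n,d}(T_{n,i})$ this gives $j\notin\hcl_{n,d}(T_{n,i})$ as needed. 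So your argument is not merely an alternative exposition but a fix of a small but genuine gap in the paper's proof.
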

	\begin{proof}
		\begin{enumerate}[(a)]
			\item  The proof is similar to that of Lemma~\ref{lem:poly-clo-builder}; instead of considering polynomials in \(\mb{R}[\mb{X}]\), we just need to consider polynomials in \(\ms{H}_n\) throughout.
			
			\item  We only need to consider \(i\le\min\{d,\lfloor n/2\rfloor\}\).  The other case can be argued exactly as in the proof of Lemma~\ref{lem:T-clo-poly}.  Now consider the polynomial
			\[
			P(\mb{X})=(X_1-X_2)\cdots(X_{2i-1}-X_{2i}).
			\]
			Clearly \(\deg P=i\le d\).  For any \(x\in\ul{j}\), where \(j\in[0,i-1]\), there exists \(t\in[i]\) such that \(x_{2t-1}=x_{2t}=0\); this gives \(P(x)=0\).  For any \(x\in\ul{j}\), where \(j\in[N-i+1,N]\), there exists \(t\in[i]\) such that \(x_{2t-1}=x_{2t}=1\); this gives \(P(x)=0\).  So \(P|_{\ul{T_{n,i}}}=0\).  Now consider any \(j\in[i,n-i]\).  Let \(x^{(j)}=(10)^i1^{j-i}0^{n-j-1}\in\ul{j}\).  Then we have \(P(x^{(j)})=1\).  This implies \(j\not\in\hcl_{n,d}(T_{n,i})\).  Hence \(\hcl_{n,d}(T_{n,i})=T_{n,i}\).\qedhere
		\end{enumerate}
	\end{proof}
	
	We can now prove Theorem~\ref{thm:hcl=zcl}.
	\begin{proof}[Proof of Theorem~\ref{thm:hcl=zcl}]
		The proof is similar to that of Theorem~\ref{thm:main-poly-closure-op}.  Instead of considering polynomials in \(\mb{R}[\mb{X}]\), we need to consider polynomials in \(\ms{H}_n\) throughout.  In addition, we need to replace Lemma~\ref{lem:poly-clo-builder} and Lemma~\ref{lem:T-clo-poly} with Lemma~\ref{lem:main-lem-hcl} (a) and (b) respectively, throughout.
	\end{proof}
	
	By Observation~\ref{obs:hyp-cover-defn} (a), Theorem~\ref{thm:hcl=zcl} and Proposition~\ref{prop:poly-L-finer} (a), we have proved Theorem~\ref{thm:main-hyp-cover} (a).  
	
	\begin{observation}\label{obs:tight-construction}
		From the proof of Lemma~\ref{lem:main-lem-hcl} (b), showing \(\hcl_{n,d}(T_{n,i})=T_{n,i}\) for \(i\le d\), we can infer the stronger statement:  For \(i,d\in[0,n],\,i\le d\), there exists \(P(\mb{X})\in\ms{H}_n\) such that \(P|_{\ul{T_{n,i}}}=0\) and \(P|_{\ul{j}}\ne0\), for every \(j\in[i,n-i]\).
	\end{observation}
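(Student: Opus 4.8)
The plan is to observe that the single polynomial already used in the proof of Lemma~\ref{lem:main-lem-hcl}~(b) does everything the observation asks for, uniformly in $j$; one only has to re-read that argument and notice that the non-vanishing witnesses it produces on each middle layer are all produced by the \emph{same} polynomial. Concretely, I would take
\[
P(\mb{X}) = \prod_{t=1}^{i}(X_{2t-1}-X_{2t}),
\]
with the convention $P\equiv 1$ when $i=0$. Being a product of $i$ polynomials of degree at most $1$, we have $P\in\ms{H}_n$ and $\deg P = i\le d$. As in the lemma, I may restrict to the case $i\le\lfloor n/2\rfloor$: if $i>\lfloor n/2\rfloor$ then $T_{n,i}=[0,n]$ and $[i,n-i]=\emptyset$, so the statement is vacuous and any $P\in\ms{H}_n$ vanishing on $\{0,1\}^n$ (say $P=X_1(X_1-1)$) works; in the remaining case $2i\le n$, so the $P$ above is well defined.

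First I would recheck $P|_{\ul{T_{n,i}}}=0$, exactly as in the proof of Lemma~\ref{lem:main-lem-hcl}~(b): the index sets $\{2t-1,2t\}$, $t\in[i]$, are $i$ pairwise disjoint pairs, so any $x\in\{0,1\}^n$ with at most $i-1$ ones must have $x_{2t-1}=x_{2t}=0$ for some $t$, killing a factor of $P$; symmetrically, any $x$ with at most $i-1$ zeros satisfies $x_{2t-1}=x_{2t}=1$ for some $t$. Hence $P(x)=0$ whenever $\wgt(x)\in[0,i-1]\cup[n-i+1,n]=T_{n,i}$.

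The only genuinely new step is to produce, for every $j\in[i,n-i]$, a point $x^{(j)}\in\ul{j}$ with $P(x^{(j)})\ne 0$, using the \emph{fixed} $P$. I would take $x^{(j)}$ to be $1$ on the coordinates $2t-1$ and $0$ on the coordinates $2t$ for $t\in[i]$, and then place the remaining $j-i$ ones arbitrarily among the coordinates $2i+1,\ldots,n$. This is possible precisely because $0\le j-i\le n-2i$, which is exactly the hypothesis $i\le j\le n-i$; then $\wgt(x^{(j)})=j$ and $P(x^{(j)})=\prod_{t=1}^{i}(1-0)=1\ne 0$. Thus $P$ does not vanish identically on $\ul{j}$, for any $j\in[i,n-i]$, and since $P$ was chosen once and for all, this is precisely the asserted strengthening.

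The main --- and really the only --- obstacle is the weight bookkeeping in the last step: one must check that, after committing the first $2i$ coordinates to the pattern that forces $P\ne 0$, there is still room for exactly $j-i$ further ones and no excess; this follows immediately from $j\in[i,n-i]$, so there is no real difficulty. I would close by indicating where this uniform form is used: when $P$ is multiplied by $\prod_{t\in E\cap[i,n-i]}\bigl(\sum_{m=1}^{n}X_m-t\bigr)$ --- a factor that is nonzero on all of $\ul{j}$ for every $j\notin E$ --- the product is nonzero at the witness $x^{(j)}$, which is exactly what the proper-hyperplane-cover construction in the proof of Theorem~\ref{thm:main-hyp-cover} requires.
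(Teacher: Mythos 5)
Your argument is correct and follows essentially the same route as the paper: reuse the single polynomial \(P(\mb{X})=\prod_{t=1}^{i}(X_{2t-1}-X_{2t})\in\ms{H}_n\) from the proof of Lemma~\ref{lem:main-lem-hcl}~(b), verify \(P|_{\ul{T_{n,i}}}=0\), and exhibit one nonvanishing point of \(P\) in each layer \(\ul{j}\) for \(j\in[i,n-i]\), noting that the same \(P\) serves all \(j\) simultaneously. Your treatment of the vacuous regime \(i>\lfloor n/2\rfloor\) (where \([i,n-i]=\emptyset\) and \(\ul{T_{n,i}}=\{0,1\}^n\)) is also fine.

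One thing worth flagging: your choice of witness in fact repairs what appears to be a slip in the paper's proof of Lemma~\ref{lem:main-lem-hcl}~(b). The paper takes \(x^{(j)}=1^j0^{n-j}\) and asserts \(P(x^{(j)})=1\), but with \(P\) as written this is false whenever \(i\ge 2\): consecutive coordinates in a block \(\{2t-1,2t\}\) are then both \(1\) or both \(0\), annihilating that factor (e.g.\ \(n=4\), \(i=j=2\) gives \(P(1,1,0,0)=(1-1)(0-0)=0\)). Your staggered witness --- set \(x_{2t-1}=1\), \(x_{2t}=0\) for \(t\in[i]\), then distribute the remaining \(j-i\) ones among the tail coordinates \(2i+1,\ldots,n\) --- forces every factor to equal \(1\), and the feasibility condition \(0\le j-i\le n-2i\) is precisely \(j\in[i,n-i]\). (Alternatively, the paper's witness \(1^j0^{n-j}\) would be the right one had the polynomial instead been \(\prod_{t=1}^{i}(X_t-X_{n-t+1})\), which also lies in \(\ms{H}_n\), has degree \(i\), and vanishes on \(\ul{T_{n,i}}\); presumably one of the two was a typo.) Either way, your version is internally consistent and proves the observation.
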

	By Observation~\ref{obs:hyp-cover-defn} (b), Theorem~\ref{thm:hcl=zcl}, Proposition~\ref{prop:poly-L-finer} (b) and Observation~\ref{obs:tight-construction}, we have proved Theorem~\ref{thm:main-hyp-cover} (b).  This completes our solution to Problem~\ref{prob:main} (a) in the Boolean cube setting.

	\section{Other applications}\label{sec:further-appl}
	
	Our combinatorial characterization of finite-degree Z*-closures of weight-determined sets in \SUt\,grids (Theorem~\ref{thm:main-poly-closure-op}) may also be interesting in its own right.  Indeed, we will consider two other applications in this section.
	
	\subsection{An easy proof of a lemma by Alon et al.~\cite{alon1988balancing}}\label{subsec:Alon-etal}
	
	Consider the following simple fact; the proof is obvious.
	\begin{fact}\label{fact:port-01-to-pm1}
		Let \(G\) be a uniform grid, \(\theta:\mb{R}^n\to\mb{R}^n\) be an invertible affine linear transformation and \(\mc{G}=\theta(G)\).  Note that \(\theta\) induces an obvious invertible affine linear transformation \(\theta:\mb{R}[\mb{X}]\to\mb{R}[\mb{X}]\) (by abuse of notation). 
		\begin{enumerate}[(a)]
			\item  \(\mc{Z}_\mc{G}(I)=\theta(\mc{Z}_G(\theta^{-1}(I)))\), for every \(I\subseteq\mb{R}[\mb{X}]\).
			\item  Note that for any \(d\in[0,N]\), we have \(\theta(\mb{R}[\mb{X}]_d)=\mb{R}[\mb{X}]_d\).  So by Item (a),
			\[
			\mc{Z}_\mc{G}(\mb{R}[\mb{X}]_d)=\theta(\mc{Z}_G(\mb{R}[\mb{X}]_d)),\quad\tx{for every }d\in[0,N].
			\]
		\end{enumerate}
	\end{fact}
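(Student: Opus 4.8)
The plan is to first pin down the one bit of notation left implicit in the statement, namely the induced action of $\theta$ on $\mb{R}[\mb{X}]$. Writing $\theta(x)=Ax+b$ with $A$ invertible and $x\in\mb{R}^n$, I would take the induced map to be $(\theta f)(x):=f(\theta^{-1}(x))$ for $f\in\mb{R}[\mb{X}]$; this is the unique sensible convention under which the Fact is literally true. It is then immediate that $\theta\mapsto(f\mapsto\theta f)$ is an action by $\mb{R}$-algebra automorphisms of $\mb{R}[\mb{X}]$, with inverse given by $(\theta^{-1}f)(x)=f(\theta(x))$; in particular $\theta^{-1}(I)=\{f\circ\theta:f\in I\}$ for any $I\subseteq\mb{R}[\mb{X}]$, and the basic identity to keep in hand is $(\theta^{-1}g)(c)=g(\theta(c))$.

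For Item (a) I would simply unwind both sides using that $\theta$ restricts to a bijection $G\to\mc{G}$, so every point of $\mc{G}$ is uniquely of the form $\theta(c)$ with $c\in G$. For such a point, $\theta(c)\in\mc{Z}_\mc{G}(I)$ says $g(\theta(c))=0$ for all $g\in I$, whereas $c\in\mc{Z}_G(\theta^{-1}(I))$ says $(\theta^{-1}g)(c)=0$ for all $g\in I$; by the identity above these two conditions are the same, and pushing the resulting equality of zero sets forward along the bijection $\theta$ gives $\mc{Z}_\mc{G}(I)=\theta(\mc{Z}_G(\theta^{-1}(I)))$.

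For Item (b) the only point to check is that $\theta^{-1}$ is affine, so its coordinate functions have degree $1$, and substituting them into a polynomial of degree at most $d$ again produces a polynomial of degree at most $d$; hence $\theta(\mb{R}[\mb{X}]_d)\subseteq\mb{R}[\mb{X}]_d$, and applying the same reasoning to $\theta^{-1}$ together with bijectivity of $\theta$ on $\mb{R}[\mb{X}]$ forces $\theta(\mb{R}[\mb{X}]_d)=\mb{R}[\mb{X}]_d$, equivalently $\theta^{-1}(\mb{R}[\mb{X}]_d)=\mb{R}[\mb{X}]_d$. Plugging $I=\mb{R}[\mb{X}]_d$ into Item (a) then yields $\mc{Z}_\mc{G}(\mb{R}[\mb{X}]_d)=\theta(\mc{Z}_G(\theta^{-1}(\mb{R}[\mb{X}]_d)))=\theta(\mc{Z}_G(\mb{R}[\mb{X}]_d))$. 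There is no genuine obstacle here: the only thing worth a sentence of care is fixing the convention for the action on polynomials so that Item (a) holds exactly as stated, after which both items are one-line substitution checks — which is presumably why the statement is flagged as obvious.
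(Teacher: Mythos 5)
The paper offers no proof of this Fact, merely remarking that ``the proof is obvious,'' so there is nothing to compare against; your argument is correct and supplies exactly the omitted substitution check. You rightly identify that the only substantive point is fixing the pushforward convention \((\theta f)(x):=f(\theta^{-1}(x))\) for the induced action on \(\mb{R}[\mb{X}]\) (under which \(\theta^{-1}(I)=\{g\circ\theta:g\in I\}\)), after which Item~(a) is the tautology \(g(\theta(c))=0\iff(\theta^{-1}g)(c)=0\) pushed along the bijection \(\theta\colon G\to\mc{G}\), and Item~(b) follows since pre-composition with an affine map preserves total degree, giving \(\theta(\mb{R}[\mb{X}]_d)=\mb{R}[\mb{X}]_d\) by applying the inclusion to both \(\theta\) and \(\theta^{-1}\).
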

	\noindent Now suppose we represent the Boolean cube as \(\{-1,1\}^n\).  In this case, we define the \tsf{Hamming weight} of \(x\in\{-1,1\}^n\) as \(|x|=|\{i\in[n]:x_i=-1\}|\).  By Fact~\ref{fact:port-01-to-pm1}, we can therefore \emph{port} all our results to the setting of the Boolean cube \(\{-1,1\}^n\) by considering \(\theta:\{0,1\}^n\to\{-1,1\}^n\) defined as \(\theta(x)=1^n-2x,\,x\in\{0,1\}^n\).
	
	Appealing to Fact~\ref{fact:port-01-to-pm1}, Lemma~\ref{lem:alon-lemma} states that \(\zscl_{n,n/2-1}(E_0)=\zscl_{n,n/2-1}(E_1)=[0,n]\).  This is equivalent to Proposition~\ref{prop:ABCO-equivalent}, by Theorem~\ref{thm:main-poly-closure-op}.  In fact, we can prove the following slightly more general statement; Proposition~\ref{prop:ABCO-equivalent} is a special case.
	\begin{proposition}\label{prop:ABCO-general}
		Let \(G\) be a uniform grid and \(m\in[N]\).  Let \(E_{m,i}=\{j\in[0,N]:j\equiv i\modulo{m}\},\,i\in[0,m-1]\).  Then \(\ol{L}_{N,\lfloor N/m\rfloor-1}(E_{m,i})=[0,N]\), for all \(i\in[0,m-1]\).
	\end{proposition}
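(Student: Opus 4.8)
The plan is to derive the equality by iterating the peeling identity of Proposition~\ref{prop:Lnd-general-prop}(c), which removes the two extreme points of an interval while lowering the degree parameter by one, until the degree reaches $0$, where Proposition~\ref{prop:Lnd-general-prop}(a) finishes the job. (Note that $\ol{L}_{N,d}$ depends only on $N$ and $d$, so the grid $G$ plays no role beyond fixing $N$.) Write $d=\lfloor N/m\rfloor-1$. The case $m=1$ is trivial, since then $E_{m,0}=[0,N]$ already. So assume $m\ge2$; then $2\lfloor N/m\rfloor\le N$, hence $2d=2\lfloor N/m\rfloor-2\le N-2$, a bound I will use repeatedly. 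The degenerate case $d=0$ will be subsumed by what follows.

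For $0\le j\le d$ put $F_j=E_{m,i}\cap[j,N-j]$, so $F_0=E_{m,i}$ and $F_{j+1}=F_j\setminus\{j,N-j\}$. The one genuinely substantive ingredient is the counting bound
\[
|F_j|\ \ge\ (d+1)-j,\qquad 0\le j\le d .
\]
This holds because $[j,N-j]$ consists of $N-2j+1$ consecutive integers and hence contains at least $\lfloor(N-2j+1)/m\rfloor$ members of the residue class $i\pmod m$; and since $m\ge2$ one has $N-2j+1\ge N-jm$, whence $\lfloor(N-2j+1)/m\rfloor\ge\lfloor N/m\rfloor-j=(d+1)-j$. I expect this estimate to be the main obstacle --- not because it is hard, but because it is precisely what must be arranged so that every peeling step below lands in the nontrivial branch of Proposition~\ref{prop:Lnd-general-prop}(c) rather than the branch where $\ol{L}$ acts trivially.

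Next I would establish, by induction on $j=0,1,\dots,d$, the identity
\[
\ol{L}_{[0,N],d}(E_{m,i})\ =\ T_{N,j}\ \cup\ \ol{L}_{[j,N-j],\,d-j}(F_j).
\]
The base case $j=0$ is immediate since $T_{N,0}=\emptyset$. For the step from $j$ to $j+1$ (with $j\le d-1$), apply Proposition~\ref{prop:Lnd-general-prop}(c) to $\ol{L}_{[j,N-j],d-j}(F_j)$: its hypotheses hold because $(N-j)-j=N-2j\ge2$ and $d-j\in[1,N-2j]$, both consequences of $j\le d-1$ and $2d\le N-2$, and because $|F_j|\ge(d-j)+1$ by the counting bound, so we are in the peeling branch. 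This gives $\ol{L}_{[j,N-j],d-j}(F_j)=\{j,N-j\}\sqcup\ol{L}_{[j+1,N-j-1],\,d-(j+1)}(F_{j+1})$, and since $T_{N,j}\cup\{j,N-j\}=T_{N,j+1}$ the induction advances.

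Finally, taking $j=d$ yields $\ol{L}_{[0,N],d}(E_{m,i})=T_{N,d}\cup\ol{L}_{[d,N-d],0}(F_d)$; by the counting bound $|F_d|\ge1$, so $F_d\ne\emptyset$ and Proposition~\ref{prop:Lnd-general-prop}(a) gives $\ol{L}_{[d,N-d],0}(F_d)=[d,N-d]$. Hence $\ol{L}_{[0,N],d}(E_{m,i})=T_{N,d}\cup[d,N-d]=[0,d-1]\cup[d,N-d]\cup[N-d+1,N]=[0,N]$, using $d\le N-d$ (again from $2d\le N-2$). Everything left --- the elementary inequalities and the interval identities --- is routine bookkeeping, so the whole argument reduces to the peeling recursion together with the density estimate for the sets $F_j$.
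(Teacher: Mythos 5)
Your proof is correct, and it takes a genuinely different route from the paper. The paper's proof (for $i=0$, remarking the other classes are similar) directly unrolls the iteration of $L_{N,d}$, tracking how the intervals $[0,km]$ and $[(\lfloor N/m\rfloor-k)m,N]$ expand inward with each application of $L_{N,d}$, and terminates after roughly $\frac12\lfloor N/m\rfloor$ iterations. You instead lean on the ``peeling'' recursion of Proposition~\ref{prop:Lnd-general-prop}(c), reducing the whole claim to a single counting estimate: the residue class $i\pmod m$ has at least $\lfloor N/m\rfloor-j$ representatives in the shrunken interval $[j,N-j]$, so at every stage of the recursion one is in the nontrivial branch. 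What your route buys you is twofold: it treats all residue classes $i$ uniformly at once rather than reducing to $i=0$, and it converts a two-sided ``growth of endpoint intervals'' argument into a cleaner one-sided ``density plus termination at degree 0'' argument, reusing the machinery already set up for Algorithm~\ref{algo:main}. The paper's version is shorter on the page but leaves more of the bookkeeping (the claimed nested containments) to the reader; yours isolates the one genuinely quantitative step -- the lower bound $|F_j|\ge(d+1)-j$ -- and makes everything else routine verification of the hypotheses of Proposition~\ref{prop:Lnd-general-prop}.
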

	\noindent For simplicity, let us just prove Proposition~\ref{prop:ABCO-general} for the case of \(N\) being an even positive integer, and \(m=2\).  The general case can be proven along similar lines.
	\begin{proof}[Proof of Proposition~\ref{prop:ABCO-general} (when \(N\) is even and \(m=2\))]
		Let \(N=2k,\,k\in\mb{Z}^+\).  Since \(m=2\), we need to prove that \(\ol{L}_{2k,k-1}(E_{2,0})=\ol{L}_{2k,k-1}(E_{2,1})=[0,2k]\).  Let us prove \(\ol{L}_{2k,k-1}(E_{2,0})=[0,2k]\); the other claim can be proved in an analogous way.
		
		Our argument is an illustration of Algorithm~\ref{algo:main}, by using Proposition~\ref{prop:Lnd-general-prop}.  Recall the set operators \(L_{[a,b],d}:2^{[a,b]}\to2^{[a,b]}\) and \(\ol{L}_{[a,b],d}:2^{[a,b]}\to2^{[a,b]}\), for \(a,b\in\mb{Z},\,a\le b\) and \(d\in[0,b-a]\), defined in Subsection~\ref{subsec:clo-eff}.  So \(\ol{L}_{2k,k-1}(E_{2,0})=\ol{L}_{[0,2k],k-1}(E_{2,0})\).  For \(i\in[k]\), let \(F_i=E_{2,0}\cap[k-i,k+i]\).  It is easy to see that for each \(i\in[k]\), we have
		\[
		|F_i|=\begin{cases}
			i&\tx{if \(i\) is odd},\\
			i+1&\tx{if \(i\) is even}.
		\end{cases}
		\]
		
		We will prove, by induction, that \(\ol{L}_{[k-i,k+i],i-1}(F_i)=[k-i,k+i]\), for all \(i\in[k]\).  By Proposition~\ref{prop:Lnd-general-prop} (a), we get the base case as \(\ol{L}_{[k-1,k+1],0}(F_1)=[k-1,k+1]\).  Now assume \(\ol{L}_{[k-i,k+i],i-1}(F_i)=[k-i,k+1]\), for some \(i\in[k-1]\).  Note that we have \(F_i=F_{i+1}\setminus\{k-i-1,k+i+1\}\). So by Proposition~\ref{prop:Lnd-general-prop} (c) and the induction hypothesis, we get
		\[
		\ol{L}_{[k-i-1,k+i+1],i}(F_{i+1})=\{k-i-1,k+i+1\}\cup\ol{L}_{[k-i,k+i],i-1}(F_i)=[k-i-1,k+i+1].
		\]
		This completes the proof.
	\end{proof}
	
	\subsection{Certifying degrees of weight-determined sets}\label{subsec:certdeg}
	
	Recall that for a uniform grid \(G\) and subset \(S\subseteq G\), the certifying degree \(\certdeg(S)\) is defined to be the smallest \(d\in[0,N]\) such that \(S\) has a certifying polynomial with degree at most \(d\).  By this definition, we observe that
	\[
	\certdeg(S)=\min\{d\in[0,N]:\zcl_{G,d}(S)\ne G\}.
	\]
	Thus for any weight-determined set \(\ul{E},\,E\subsetneq[0,N]\), we get
	\[
	\certdeg(\ul{E})=\min\{d\in[0,N]:\zcl_{G,d}(\ul{E})\ne G\}=\min\{d\in[0,N]:\zscl_{G,d}(E)\ne[0,N]\},
	\]
	since \(\zcl_{G,d}(\ul{E})=G\) if and only if \(\zscl_{G,d}(E)=[0,N]\).
	Consider any symmetric subset \(E\subsetneq[0,n]\).  It then follows immediately from Lemma~\ref{lem:poly-cover-defn}, Theorem~\ref{thm:main-poly-closure-op} and Proposition~\ref{prop:poly-L-finer} that if \(G\) is an \SUt\,grid, then for any \(E\subsetneq[0,N]\),
	\[
	\certdeg(\ul{E})=\PC_G(E)=\min\{d\in[0,N]:E\tx{ is \(d\)-admitting}\}.
	\]
	This proves Theorem~\ref{thm:certdeg}.

	\section{A third variant: the exact covering problem}\label{sec:exact-cover}
	
	Our third covering problem is quite an intuitive variant of the hyperplane and polynomial covering problems, given the \emph{nontrivial} and \emph{proper} covering versions that we have considered so far.  However, we have more questions than answers about this third variant.  Let \(G\) be a uniform grid.  Consider a weight-determined set \(\ul{E}\), where \(E\subsetneq[0,N]\).  We say
	\begin{itemize}
		\item  a family of hyperplanes \(\mc{H}\) in \(\mb{R}^n\) is an \tsf{exact hyperplane cover} of \(\ul{E}\) if \(\ul{E}=\big(\bigcup_{H\in\mc{H}}H\big)\cap G\).
		\item  a polynomial \(P(\mb{X})\in\mb{R}[\mb{X}]\) is  an \tsf{exact polynomial cover} of \(\ul{E}\) if \(\ul{E}=\mc{Z}_G(P)\).
	\end{itemize}
	Let \(\EHC_G(E)\) and \(\EPC_G(E)\) denote the minimum size of an exact hyperplane cover and the minimum degree of an exact polynomial cover respectively, for a weight-determined set \(\ul{E},\,E\subsetneq[0,N]\).  In the case \(G=\{0,1\}^n\), we will instead use the notations \(\EHC_n(E)\) and \(\EPC_n(E)\).
	
	We first note that \(\EPC_G\) can be characterized in terms of the finite-degree Z-closures. Contrast this with Lemma~\ref{lem:poly-cover-defn} which characterizes \(\PPC_G\) in terms of the finite-degree Z*-closures.
	\begin{proposition}\label{prop:EPC-grid}
		Let \(G\) be a uniform grid.  For any \(E\subsetneq[0,N]\),
		\[
		\EPC_G(E)=\min\{d\in[0,N]:\zcl_{G,d}(\ul{E})=\ul{E}\}.
		\]
	\end{proposition}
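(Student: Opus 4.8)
The plan is to read off both inequalities directly from the definition of the finite-degree Z-closure, in the same spirit as the proof of Lemma~\ref{lem:poly-cover-defn}(b), but tracking individual points of $G$ rather than whole layers $\ul j$. Write $d_0=\min\{d\in[0,N]:\zcl_{G,d}(\ul E)=\ul E\}$. First I would note that $\EPC_G(E)$ is well-defined and at most $N$: the polynomial $\prod_{j\in E}\big(\sum_{i=1}^nX_i-j\big)$ has degree $|E|\le N$ (with the empty product read as $1$) and zero set exactly $\ul E$ in $G$; well-definedness of $d_0$ then follows a posteriori from the first inequality below.

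For $d_0\le\EPC_G(E)$: let $P$ be an exact polynomial cover of $\ul E$ of minimum degree and set $d=\EPC_G(E)=\deg P$. Then $P\in\mc{I}(\mb{R}[\mb{X}]_d,\ul E)$, so $\zcl_{G,d}(\ul E)\subseteq\mc{Z}_G(P)=\ul E$; since $\ul E\subseteq\zcl_{G,d}(\ul E)$ always, we get $\zcl_{G,d}(\ul E)=\ul E$, and hence $d_0\le d$.

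For the reverse inequality $\EPC_G(E)\le d_0$: since $\zcl_{G,d_0}(\ul E)=\ul E$, for every $a\in G\setminus\ul E$ there is a polynomial $P_a\in\mb{R}[\mb{X}]$ with $\deg P_a\le d_0$, $P_a|_{\ul E}=0$, and $P_a(a)\ne0$. The goal is to glue these finitely many polynomials into a single $P$ of degree at most $d_0$ that vanishes on $\ul E$ and is nonzero on all of $G\setminus\ul E$. I would take $P=\sum_{a\in G\setminus\ul E}\beta_aP_a$ for suitable $\beta_a\in\mb{R}$: for each fixed $b\in G\setminus\ul E$, the linear functional $(\beta_a)_a\mapsto\sum_a\beta_aP_a(b)$ is not identically zero, since its coefficient of $\beta_b$ is $P_b(b)\ne0$, so it vanishes only on a proper linear subspace of $\mb{R}^{G\setminus\ul E}$; as $\mb{R}$ is infinite, the finite union over all $b\in G\setminus\ul E$ of these subspaces does not cover $\mb{R}^{G\setminus\ul E}$, and any choice of $(\beta_a)_a$ outside this union works. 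For such a choice, $\deg P\le d_0$, $P|_{\ul E}=0$, and $P(b)\ne0$ for all $b\in G\setminus\ul E$, that is, $\mc{Z}_G(P)=\ul E$, so $P$ is an exact polynomial cover of $\ul E$ and $\EPC_G(E)\le d_0$. Combining the two inequalities gives the claim. Everything here is immediate from the definitions except the gluing step, which is the one place needing any care; but it is just the standard fact that a vector space over an infinite field is not a finite union of proper subspaces, so I do not expect it to be an obstacle.
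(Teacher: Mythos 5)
Your proof is correct and follows essentially the same two-inequality structure as the paper's: the easy direction reads $\zcl_{G,\deg P}(\ul E)\subseteq\mc{Z}_G(P)=\ul E$ off an optimal exact cover, and the other direction glues the pointwise witnesses $P_a$ into a single polynomial via a generic linear combination. The only difference is that you make the ``choose scalars $\beta_a$'' step fully rigorous with the finite-union-of-proper-subspaces argument, whereas the paper leaves it implicit.
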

	\begin{proof}[Proof of Proposition~\ref{prop:EPC-grid}]
		Let \(d'=\EPC_G(E)\) and \(d''=\min\{d\in[0,N]:\zcl_{G,d}(\ul{E})=\ul{E}\}\).  There exists a polynomial \(P(\mb{X})\in\mb{R}[\mb{X}]\) such that \(\deg P=d'\), \(P|_{\ul{E}}=0\) and \(P(a)\ne0\), for all \(a\in G\setminus\ul{E}\).  This implies \(\zcl_{G,d'}(\ul{E})=\ul{E}\), and so \(d''\le d'\).
		
		Further, for every \(a\in G\setminus\ul{E}\), there exists \(Q_a(\mb{X})\in\mb{R}[\mb{X}]\) such that \(\deg Q_a\le d''\), \(Q_a|_{\ul{E}}=0\) and \(Q_a(a)=1\).  We can then choose scalars \(\beta_a\in\mb{R},\,a\in G\setminus\ul{E}\) such that the polynomial \(Q(\mb{X})\coloneqq\sum_{a\in G\setminus\ul{E}}\beta_aP_a(\mb{X})\) satisfies \(\deg P\le d''\), \(P|_{\ul{E}}=0\) and \(P(a)\ne0\), for all \(a\in G\setminus\ul{E}\).  So \(d'\le d''\), and this completes the proof.
	\end{proof}
	
	However, we do not have a further characterization of the finite-degree Z-closures of weight-determined sets.  In the Boolean cube setting, however, since the finite-degree Z-closures and Z*-closures coincide for all symmetric sets, we immediately get the following by appealing to Theorem~\ref{thm:main-poly-cover}, Theorem~\ref{thm:main-hyp-cover}, Theorem~\ref{thm:main-poly-closure-op} and Theorem~\ref{thm:hcl=zcl}.
	\begin{corollary}\label{cor:Boolean-all}
		Consider the Boolean cube \(\{0,1\}^n\).  For any \(E\subsetneq[0,n]\),
		\[
		\EPC_n(E)=\PPC_n(E)=\PHC_n(E)=|E|-\max\{i\in[0,n]:T_{n,i}\subseteq E\}.
		\]
	\end{corollary}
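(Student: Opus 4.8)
The plan is to deduce all three equalities by stitching together results already established, the key enabling fact being Theorem~\ref{thm:hcl=zcl}: in the Boolean cube the finite-degree Z-closure and Z*-closure agree on symmetric sets.

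First I would dispose of $\PHC_n(E)=\PPC_n(E)$, which is literally the content of Theorem~\ref{thm:main-hyp-cover}(b): both equal $|E|-\max\{i\in[0,\lfloor n/2\rfloor]:T_{n,i}\subseteq E\}$. To reconcile this with the expression in the statement, I would note that $T_{n,i}=[0,n]$ for every $i\ge\lfloor n/2\rfloor+1$, so since $E\subsetneq[0,n]$ no such $i$ can satisfy $T_{n,i}\subseteq E$; hence $\max\{i\in[0,\lfloor n/2\rfloor]:T_{n,i}\subseteq E\}=\max\{i\in[0,n]:T_{n,i}\subseteq E\}$, and the right-hand side of the corollary is unambiguous.

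Next, for $\EPC_n(E)$ I would start from Proposition~\ref{prop:EPC-grid}, which gives $\EPC_n(E)=\min\{d\in[0,n]:\zcl_{n,d}(\ul{E})=\ul{E}\}$. Since the finite-degree Z-closure of a symmetric set of $\{0,1\}^n$ is again symmetric, we have $\zcl_{n,d}(\ul{E})=\zscl_{n,d}(\ul{E})$, and in particular $\zcl_{n,d}(\ul{E})=\ul{E}$ if and only if $\zscl_{n,d}(E)=E$. Therefore $\EPC_n(E)=\min\{d\in[0,n]:\zscl_{n,d}(E)=E\}$, which is exactly $\PPC_n(E)$ by Lemma~\ref{lem:poly-cover-defn}(b). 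Alternatively, one may substitute $\zscl_{n,d}(E)=\ol{L}_{n,d}(E)$ from Theorem~\ref{thm:main-poly-closure-op} (or Theorem~\ref{thm:hcl=zcl}) and use Proposition~\ref{prop:poly-L-finer}(b), which says $\ol{L}_{n,d}(E)=E$ iff $T_{n,|E|-d}\subseteq E$; the least such $d$ is then $|E|$ minus the largest $i$ with $T_{n,i}\subseteq E$, giving the closed form directly, and matching $\PPC_n(E)=|E|-\max\{i:T_{n,i}\subseteq E\}$ from Theorem~\ref{thm:main-poly-cover}(b) applied to the \SUt\,grid $\{0,1\}^n$.

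There is no genuine difficulty here; the one point that must be handled with care is that Proposition~\ref{prop:EPC-grid} characterizes $\EPC_G$ via the finite-degree \emph{Z-closure}, whereas our combinatorial machinery (Theorem~\ref{thm:main-poly-closure-op}) characterizes the finite-degree \emph{Z*-closure}. These coincide precisely because we work in the Boolean cube; for a general \SUt\,grid the identification fails — the Z-closure of a weight-determined set need not be weight-determined — which is exactly why $\EPC_G$ for general \SUt\,grids remains open.
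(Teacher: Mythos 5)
Your proposal is correct and follows essentially the same route the paper takes, which is simply to combine Proposition~\ref{prop:EPC-grid}, the coincidence of Z- and Z*-closures for symmetric sets of the cube (Theorem~\ref{thm:hcl=zcl}), Lemma~\ref{lem:poly-cover-defn}(b), and Theorem~\ref{thm:main-hyp-cover}(b). You have also correctly handled the small notational point that $\max\{i\in[0,n]:T_{n,i}\subseteq E\}=\max\{i\in[0,\lfloor n/2\rfloor]:T_{n,i}\subseteq E\}$ when $E\subsetneq[0,n]$, which the paper leaves implicit.
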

	\noindent Further, characterizing \(\EHC_n(E)\) for \(E\subsetneq[0,n]\) seems to be even more difficult.  We have the following partial results.
	\begin{proposition}\label{prop:EHC}
		Consider the Boolean cube \(\{0,1\}^n\), and any \(E\subsetneq[0,n]\).
		\begin{enumerate}[(a)]
			\item  If \(T_{n,1}\not\subseteq E\), then \(\EHC_n(E)=|E|\).
			\item  If \(n\ge 2\) and \(T_{n,1}\subseteq E,\,T_{n,2}\not\subseteq E\), then \(\EHC_n(E)=|E|-1\).
			\item  If \(n\ge4\), then \(\EHC_n(T_{n,2})=2\).
		\end{enumerate}
	\end{proposition}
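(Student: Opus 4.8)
The plan is to sandwich \(\EHC_G(E)\) between one elementary upper bound and lower bounds pulled from the theory of proper covers, and then in (b) and (c) to close the gap by an explicit construction. Two universal facts set things up. \textbf{Upper bound:} for any \(E\), the \(|E|\) hyperplanes \(\{\sum_{i=1}^nX_i=j:j\in E\}\) have union meeting \(G\) in exactly \(\ul{E}\), so \(\EHC_G(E)\le|E|\). \textbf{Lower bound:} if \(\mc H\) is an exact hyperplane cover of \(\ul E\) with \(\ell_H\) defining \(H\), then \(P=\prod_{H\in\mc H}\ell_H\) has \(\deg P=|\mc H|\) and \(\mc Z_G(P)=\bigcup_{H\in\mc H}(H\cap G)=\ul E\); in particular \(\ul j\cap\mc Z_G(P)=\emptyset\) for every \(j\notin E\), so \(P\) is a proper polynomial cover and \(\EHC_G(E)\ge\PPC_G(E)=\min\{d:\zscl_{G,d}(E)=E\}\) by Lemma~\ref{lem:poly-cover-defn}(b).

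For \textbf{(a)}, assume \(E\ne\emptyset\); since \(T_{N,1}=\{0,N\}\not\subseteq E\), say \(0\notin E\) (the case \(N\notin E\) is symmetric), so \(\min E\ge1\). For any \(d\le|E|-1\) we have \(|E|\ge d+1\), so the Closure Builder Lemma~\ref{lem:poly-clo-builder} gives \(0\in[0,\min E]\subseteq\zscl_{G,d}(E)\), whence \(\zscl_{G,d}(E)\ne E\). Thus \(\PPC_G(E)\ge|E|\), and combined with the universal upper bound, \(\EHC_G(E)=|E|\). For the lower bound in \textbf{(b)}, take \(d=|E|-2\): Observation~\ref{obs:Lnd} gives \(\ol L_{N,d}(E)\subseteq\zscl_{G,d}(E)\), while Proposition~\ref{prop:poly-L-finer}(b) gives \(\ol L_{N,|E|-2}(E)=E\iff T_{N,2}\subseteq E\); since \(T_{N,2}\not\subseteq E\), we get \(\ol L_{N,|E|-2}(E)\supsetneq E\), hence \(\zscl_{G,|E|-2}(E)\ne E\) and \(\PPC_G(E)\ge|E|-1\).

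For the upper bound in (b): from \(T_{N,1}\subseteq E\), write \(E=\{0=e_1<\cdots<e_s=N\}\); the \(s-2\) layer hyperplanes \(\{\sum X_i=e_j:2\le j\le s-1\}\) together with a single hyperplane \(H_0\) meeting \(G\) in exactly the two extreme points \(0^n\) and \((k_1-1,\dots,k_n-1)\) form an exact cover of size \(s-1=|E|-1\). In \(\{0,1\}^n\) (\(n\ge2\)) one takes \(H_0=\{\sum_ia_iX_i=0\}\) with \(a_i=2^{i-1}\) for \(i<n\) and \(a_n=-(2^{n-1}-1)\), so that \(\sum_{i\in S}a_i=0\) only for \(S\in\{\emptyset,[n]\}\); this is the step I expect to be the main obstacle, because for a general uniform grid such an \(H_0\) need not exist (e.g. in \([0,2]^2\) every line through \((0,0)\) and \((2,2)\) also passes through \((1,1)\)), so the general case would require adapting the construction or restricting the grid.

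For \textbf{(c)} (so \(G=\{0,1\}^n\), and \(n\ge4\) so \(|T_{n,2}|=4\)): the lower bound is \(\EHC_n(T_{n,2})\ge\PPC_n(T_{n,2})=|T_{n,2}|-\max\{i:T_{n,i}\subseteq T_{n,2}\}=4-2=2\) by Theorem~\ref{thm:main-poly-cover}(b), since \(T_{n,3}\not\subseteq T_{n,2}\). For the matching upper bound I would exhibit two linear forms whose zero sets partition \(\ul{T_{n,2}}\): first \(\ell_1=\sum_{i=1}^{n-2}2^{i-1}X_i-(2^{n-2}-1)X_{n-1}\) (not involving \(X_n\)), whose zero set in \(\{0,1\}^n\) is exactly the four points whose first \(n-1\) coordinates are all equal, namely those of weights \(0,1,n-1,n\) with constant prefix; and \(\ell_2=X_1+\cdots+X_{n-1}-(n-3)X_n-1\), which vanishes exactly on the weight-one points having last coordinate \(0\) and the weight-\((n{-}1)\) points having last coordinate \(1\), and at no point of weight \(2,\dots,n-2\). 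Then \(\mc Z(\ell_1)\cup\mc Z(\ell_2)=\{0^n,1^n\}\cup\ul 1\cup\ul{n-1}=\ul{T_{n,2}}\), so \(\ell_1\ell_2\) gives an exact hyperplane cover of size \(2\). The recurring technical point -- and the genuine crux of the argument -- is verifying that these explicit forms vanish on precisely the intended points and avoid every intermediate weight; beyond that, the only delicate issue is the existence of the extreme-connecting hyperplane \(H_0\) underlying (b).
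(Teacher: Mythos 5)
Your sandwich framework---upper bound from layer hyperplanes $\{\sum_iX_i=t:t\in E\}$, lower bound $\EHC_G\ge\PPC_G$ by taking the product of the defining linear forms---is precisely the paper's. For part (a) your argument is correct and slightly more self-contained: you invoke the Closure Builder Lemma~\ref{lem:poly-clo-builder} directly rather than the closed-form expression for $\PPC_G$ (which the paper cites but is only established for \SUt\ grids; your route applies verbatim to any uniform grid). Your explicit constructions in (b) for $\{0,1\}^n$ and in (c) both check out; for (c) the paper uses the different pair $H_0=-(n-2)X_2+\sum_{i\ge3}X_i$, $H_1=-(n-3)X_1+\sum_{i\ge2}X_i-1$, but both are valid exact covers of $\ul{T_{n,2}}$ by the same kind of direct verification.

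The concern you raise in (b) is not merely a gap in your own write-up but a genuine error in the paper. The paper asserts, for an arbitrary uniform grid, that one can choose $a_1,\ldots,a_n$ so that $H_0=\sum_ia_iX_i$ has $\mc Z_G(H_0)=\{0^n,(k_1-1,\ldots,k_n-1)\}$. This requires that the line $\{\lambda(k_1-1,\ldots,k_n-1):\lambda\in\mb R\}$ meet $G$ only at its endpoints, which fails exactly when $\gcd(k_1-1,\ldots,k_n-1)>1$. Your example $G=[0,2]^2$ already breaks the \emph{statement}, not just the proof: take $E=T_{4,1}=\{0,4\}$, so $\ul E=\{(0,0),(2,2)\}$; then $T_{4,1}\subseteq E$ and $T_{4,2}\not\subseteq E$, so part (b) would give $\EHC_G(E)=1$. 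But the only affine line through $(0,0)$ and $(2,2)$ is $X_1=X_2$, which also meets $(1,1)\in G\setminus\ul E$, so no single hyperplane is an exact cover of $\ul E$; together with the two-line cover $\{X_1+X_2=0,\,X_1+X_2=4\}$ this gives $\EHC_G(\{0,4\})=2$. Hence part (b) is false for general uniform grids and needs an extra hypothesis (such as $\gcd(k_1-1,\ldots,k_n-1)=1$, which holds for $\{0,1\}^n$ and makes your $a_i=2^{i-1}$ construction work); you were right to flag this as the crux.
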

	\begin{proof}
		\begin{enumerate}[(a)]
			\item  Clearly \(\EHC_n(E)\ge\PPC_n(E)=|E|-\max\{i\in[0,n]:T_{n,i}\subseteq E\}=|E|\), since \(T_{n,1}\not\subseteq E\).  Further, the hyperplane cover \(\{H_t:t\in E\}\), where \(H_t(\mb{X})\coloneqq\sum_{i\in[n]}X_i-t,\,t\in E\), is an exact cover of \(\ul{E}\) having size \(|E|\).
			
			\item  Again clearly \(\EHC_n(E)\ge\PPC_n(E)=|E|-\max\{i\in[0,n]:T_{n,i}\subseteq E\}=|E|-1\), since \(T_{n,1}\subseteq E,\,T_{n,2}\not\subseteq E\).  Further, we can choose scalars \(a_i\in\mb{R},\,i\in[n]\) such that \(H_0(\mb{X})\coloneqq\sum_{i\in[n]}a_iX_i\) satisfies \(\mc{Z}_G(H_0)=\{0^n,1^n\}\).  So the hyperplane cover \(\{H_0\}\cup\{H_t:t\in E\setminus T_{n,1}\}\), where \(H_t(\mb{X})\coloneqq\sum_{i\in[n]}X_i-t,\,t\in E\setminus T_{n,1}\), is an exact cover of \(\ul{E}\) having size \(|E|-1\).
			
			\item  Obviously \(\EHC_n(T_{n,2})\ge\PPC_n(T_{n,2})=2\).  Now consider the hyperplane cover \(\{H_0,H_1\}\), where
			\[
			H_0(\mb{X})\coloneqq\n-(n-2)X_2+\sum_{i\in[3,n]}X_i\quad\tx{and}\quad H_1(\mb{X})\coloneqq\n-(n-3)X_1+\sum_{i\in[2,n]}X_i-1.
			\]
			It is clear that \(\{0^n,1^n,10^{n-1},01^{n-1}\}\subseteq\mc{Z}(H_0)\), and \(\{0^j10^{n-j-1},1^j01^{n-j-1}\}\subseteq\mc{Z}(H_1)\), for every \(j\in[2,n]\).  Let \(x\in\ul{[2,n-2]}\).  If \(x_2=0\) then \(H_0(x)=\sum_{i\in[3,n]}x_i\ge1\), and if \(x_2=1\) then \(H_0(x)=-(n-2)+\sum_{i\in[3,n]}x_i\le-1\).  Further, if \(x_1=0\) then \(H_1(x)=\sum_{i\in[2,n]}x_i-1\ge1\), and if \(x_1=1\) then \(H_1(x)=-(n-3)+\sum_{i\in[2,n]}x_i-1\le-1\).  Thus \(\{H_0,H_1\}\) is an exact hyperplane cover of \(\ul{T_{n,2}}\), with size 2.  This implies that \(\EHC_n(T_{n,2})=2=|T_{n,2}|-2\).\qedhere
		\end{enumerate}
	\end{proof}
	
	Now assume \(n\ge4\), and let \(E\subsetneq[0,n]\) such that \(T_{n,2}\subseteq E\).  Consider \(\{H_0,H_1\}\), the exact hyperplane cover of \(\ul{T_{n,2}}\), as given in the proof of Proposition~\ref{prop:EHC} (c).  Then \(\{H_0,H_1\}\cup\{H_j:j\in E\setminus T_{n,2}\}\) is an exact hyperplane cover for \(\ul{E}\), with size \(|E|-2\), where \(H_j(\mb{X})\coloneqq\sum_{i\in[n]}X_i-j\), for all \(j\in E\setminus T_{n,2}\).  So \(\EHC_n(E)\le|E|-2\).  We conjecture that this bound is tight, for every \(E\subsetneq[0,n]\) such that \(T_{n,2}\subseteq E\).
	\begin{conjecture}\label{conj:EHC}
		For \(n\ge4\), consider the Boolean cube \(\{0,1\}^n\).  If \(E\subsetneq[0,n]\) such that \(T_{n,2}\subseteq E\), then \(\EHC_n(E)=|E|-2\).
	\end{conjecture}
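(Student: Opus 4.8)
The upper bound \(\EHC_n(E)\le|E|-2\) is already established by the construction immediately preceding this conjecture: take the size‑two exact cover \(\{H_0,H_1\}\) of \(\ul{T_{n,2}}\) and adjoin the level hyperplanes \(\sum_{i\in[n]}X_i=j\) for \(j\in E\setminus T_{n,2}\). So the plan is to prove the matching lower bound \(\EHC_n(E)\ge|E|-2\).

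First I would dispose of most cases by a puncturing argument in the spirit of the Closure Builder Lemma~\ref{lem:poly-clo-builder}. Let \(\mc{H}\) be an exact hyperplane cover of \(\ul{E}\), and let \(P(\mb{X})=\prod_{H\in\mc{H}}\ell_H(\mb{X})\) be the product of the linear forms defining its hyperplanes, so that \(\mc{Z}_{\{0,1\}^n}(P)=\ul{E}\). Fix \(j\in[0,n]\setminus E\) and \(x^\ast\in\ul{j}\), and set
\[
R(\mb{X})=P(\mb{X})\cdot\prod_{\substack{c\in[0,n]\setminus E\\c\ne j}}\Big(\sum_{i\in[n]}X_i-c\Big)\cdot\prod_{i\,:\,x^\ast_i=1}X_i.
\]
Then \(R\) vanishes on \(\{0,1\}^n\setminus\{x^\ast\}\) but \(R(x^\ast)\ne0\), so Theorem~\ref{thm:alon-furedi-basic} gives \(\deg R\ge n\), i.e. \(|\mc{H}|\ge|E|-j\). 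Optimising over \(j\), and using that \(x\mapsto 1^n-x\) is an affine isomorphism (so \(\EHC_n(E)=\EHC_n(\{n-c:c\in E\})\)), yields \(|\mc{H}|\ge|E|-\min\{j_0,\,n-j_1\}\), where \(j_0=\min([0,n]\setminus E)\) and \(j_1=\max([0,n]\setminus E)\). Since \(T_{n,2}\subseteq E\) forces \(j_0\ge2\) and \(n-j_1\ge2\), this already proves \(\EHC_n(E)\ge|E|-2\) unless \(\min\{j_0,n-j_1\}\ge3\), that is, unless \(T_{n,3}\subseteq E\).

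It remains to treat the case \(T_{n,3}\subseteq E\), in which the argument above -- being valid for an arbitrary polynomial exact cover -- only recovers the weaker bound \(|\mc{H}|\ge\EPC_n(E)\), which here is at most \(|E|-3\); so one must genuinely exploit that \(\mc{H}\) consists of hyperplanes. The plan is to split \(\mc{H}\) into the \emph{level} hyperplanes \(\{\sum_{i\in[n]}X_i=c\}\), whose levels form a set \(C\subseteq E\), and the \emph{non-level} ones \(\mc{H}'\); the latter must cover \(\ul{E\setminus C}\) while staying inside \(\ul{E}\), so it suffices to show that a product of non-level linear forms whose zero set in \(\{0,1\}^n\) is sandwiched between \(\ul{F}\) and \(\ul{E}\), with \(F=E\setminus C\ne\emptyset\), has at least \(|F|-2\) factors; then \(|\mc{H}|=|C|+|\mc{H}'|\ge(|E|-|F|)+(|F|-2)=|E|-2\). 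To prove this one would analyse, for each layer \(\ul{j}\) with \(1\le j\le n-1\), the trace on \(\binom{[n]}{j}\) of a single non-level form \(\sum_{i\in[n]}a_iX_i=b\) -- the family \(\{S:\sum_{i\in S}a_i=b\}\) is a disjoint union of ``layered complete multipartite'' blocks read off from the level sets of \((a_i)_{i\in[n]}\) -- and show that the traces of \(|F|-3\) such forms cannot simultaneously cover \(\ul{j}\) for every \(j\in F\) and avoid \(\ul{j}\) for every \(j\in[0,n]\setminus E\). The point feeding the ``\(-2\)'' is that a non-level form contains a \emph{full} layer \(\ul{j}\) only for \(j\in\{0,n\}\).

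This structural step is the main obstacle, and is why the statement is only conjectured. One tempting shortcut -- averaging \(P(\mb{X})^2\) over coordinate permutations to produce a univariate polynomial \(h\) in the weight \(|x|\), of degree at most \(2|\mc{H}|\), nonnegative on \(\{0,1,\dots,n\}\) and vanishing exactly on \(E\) -- yields only \(|\mc{H}|\ge|E|/2\), since \(h\) need not be nonnegative on the whole real interval \([0,n]\) and so its interior integer zeros need not have even multiplicity. Some genuinely combinatorial handle on products of linear forms, beyond mere degree counting, therefore seems necessary.
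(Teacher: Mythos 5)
You have correctly diagnosed the situation: the statement you were asked to prove is labelled a \emph{conjecture} in the paper precisely because the paper has no proof of the lower bound, and your write-up honestly reflects that. Let me calibrate your partial progress against what the paper does (and does not) establish.

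Your upper bound citation is right, and your puncturing argument for the lower bound is sound. Checking the degree count: with \(j\in[0,n]\setminus E\) and \(x^\ast\in\ul{j}\), the polynomial \(R\) has degree \(\deg P+(n-|E|)+j\), vanishes everywhere on \(\{0,1\}^n\) except at \(x^\ast\), so Theorem~\ref{thm:alon-furedi-basic} gives \(\deg P\ge|E|-j\); optimising and complementing gives \(|\mc{H}|\ge|E|-\min\{j_0,n-j_1\}\). Since \(\min\{j_0,n-j_1\}=\max\{i:T_{n,i}\subseteq E\}\), this is exactly the bound \(\EHC_n(E)\ge\EPC_n(E)=\PPC_n(E)\) that the paper itself uses (via Corollary~\ref{cor:Boolean-all} and Proposition~\ref{prop:EHC}), just re-derived in a self-contained way. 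So your argument proves the conjecture in the subcase \(T_{n,2}\subseteq E\), \(T_{n,3}\not\subseteq E\), which is also implicit (though not explicitly singled out) in the paper.

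For the remaining case \(T_{n,3}\subseteq E\), the bound \(\PPC_n(E)\le|E|-3\) shows that any argument which treats \(P=\prod_{H\in\mc{H}}\ell_H\) as a black-box polynomial cannot close the gap; one must use that \(P\) factors into linear forms. Your sketched split into level and non-level hyperplanes, together with the observation that a non-level form contains a full layer \(\ul{j}\) only for \(j\in\{0,n\}\), is the natural place to start, but as you say it is only a plan and the key structural lemma (that \(|F|-3\) non-level forms cannot achieve the required trace pattern) is unproven. Your remark that the permutation-averaging of \(P^2\) only yields \(|\mc{H}|\ge|E|/2\), because the resulting univariate polynomial need not be nonnegative between lattice points, is also a correct and useful sanity check. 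In short: your submission is not a proof and does not claim to be one, and that is the right assessment — the conjecture is genuinely open in the paper as well.
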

	Finally, we propose the following open question.
	\begin{openproblem}
		For a uniform (or \SUt) grid \(G\ne\{0,1\}^n\), determine \(\EHC_G(E)\) and \(\EPC_n(E)\), for all \(E\subsetneq[0,N]\).
	\end{openproblem}

	\newpage
	\section{Further remarks: Some updates post publication}\label{sec:updates}
	
	This section was added post the publication of this work.  Here, we mention a few remarks that complete this work better.
	
	\subsection{The exact covering problem: Conjecture~\ref{conj:EHC} is false, and \(\EHC_n=\EPC_n\)}
	
	Consider the Boolean cube \(\{0,1\}^n\).  We will characterize \(\EHC_n(E)\), for all \(E\subsetneq[0,n]\).  By the definitions and Corollary~\ref{cor:Boolean-all}, we get
	\[
	\EHC_n(E)\ge\EPC_n(E)=\PPC_n(E)=\PHC_n(E)=|E|-\max\{i\in[0,n]:T_{n,i}\subseteq E\}.
	\]
	In Proposition~\ref{prop:EHC} (a) and (b), we see that (for all valid \(n\)) the first inequality above is tight in the cases \n(i) \(T_{n,1}\not\subseteq E\), and \n(ii) \(T_{n,1}\subseteq E,\,T_{n,2}\not\subseteq E\).  The only remaining case is \(T_{n,2}\subseteq E\).  In this case, Proposition~\ref{prop:EHC} (c) and Conjecture~\ref{conj:EHC} respectively state that the first inequality is tight if \(E=T_{n,2}\), and not tight otherwise.  Now, we show that the first inequality is tight if \(T_{n,2}\subsetneq E\).  In particular, Conjecture~\ref{conj:EHC} is false.
	
	In a recent work, Ghosh, Kayal, and Nandi~\cite{ghosh-kayal-nandi-2022-alon-furedi} gave the following construction of hyperplanes, while solving a different hyperplane covering problem.  This construction, in fact, extends our construction given in the proof of Proposition~\ref{prop:EHC} (c).  We state this in our notation, which agrees with~\cite{ghosh-kayal-nandi-2022-alon-furedi} up to a change of coordinates.
	\begin{lemma}[{\cite[Lemma 2.5]{ghosh-kayal-nandi-2022-alon-furedi}}]\label{lem:GKN}
		  Let \(i\in[0,\lceil n/2\rceil]\).  Consider the hyperplane cover \(\{H_j:j\in[0,i-1]\}\), where
		  \[
		  H_j(\mb{X})\coloneqq-(n-2i+j)X_{i-1-j}+\sum_{t\in[i-j,n]}X_t-j.
		  \]
		  Then \(\{H_j:j\in[0,i-1]\}\) is an exact hyperplane cover of \(\ul{T_{n,i}}\), having size \(i\).
	\end{lemma}
	We then immediately get the following.
	\begin{corollary}\label{cor:EHC}
		Consider the Boolean cube \(\{0,1\}^n\).  For any \(E\subsetneq[0,n]\),
		\[
		\EHC_n(E)=|E|-\max\{i\in[0,n]:T_{n,i}\subseteq E\}.
		\]
	\end{corollary}
	\begin{proof}
		Let \(i_0=\max\{i\in[0,n]:T_{n,i}\subseteq E\}\in[0,\lceil,n/2\rceil]\).  So clearly, \(\EHC_n(E)\ge\EPC_n(E)=|E|-i_0\).  Let \(\{H_j:j\in[0,i_0-1]\}\) be the exact hyperplane cover of \(\ul{T_{n,i_0}}\), as given by Lemma~\ref{lem:GKN}.  For \(t\in E\setminus T_{n,i_0}\), let \(H'_t=\sum_{i\in[0,n]}X_i-t\).  Then it is immediate that \(\{H_j:j\in[0,i_0-1]\}\cup\{H'_t:t\in E\setminus T_{n,i_0}\}\) is an exact hyperplane cover of \(\ul{E}\), having size \(i_0+(|E|-2i_0)=|E|-i_0\).
	\end{proof}

	So the progress made in our work, as indicated in Table~\ref{tab:cube}, can be updated to the following.
	\begin{table}[h]
		\centering
		\bgroup
		\def\arraystretch{1.5}
		\begin{tabular}{|c|c|c|c|}
			\hline
			\tbf{Cover}&\tbf{Nontrivial}&\tbf{Proper}&\tbf{Exact}\\
			\hline
			\tbf{Hyperplane}&\cellcolor{SpringGreen}\(\HC_n\)&\cellcolor{Yellow}\(\PHC_n\)&\cellcolor{Yellow}\(\EHC_n\)\\
			\hline
			\tbf{Polynomial}&\cellcolor{SpringGreen}\(\PC_n\)&\cellcolor{Yellow}\(\PPC_n\)&\cellcolor{Yellow}\(\EPC_n\)\\
			\hline
		\end{tabular}
		\egroup
		\caption{For the Boolean cube \(\{0,1\}^n\) (Table~\ref{tab:cube} updated)}
	\end{table}
	
	\subsection{Tight examples}
	
	Here, for convenience, we quickly collect all the tight examples to the problems that we have solved.
	\begin{enumerate}[(1)]
		\item  Consider the Boolean cube \(\{0,1\}^n\), and any \(E\subsetneq[0,n]\).
		\begin{enumerate}[(a)]
			\item  \(\HC_n(E)=\PC_n(E)\).
			
			Let \(d_0=\min\{d\in[0,n]:E\tx{ is \(d\)-admitting}\}\), and \(i_0=\max\{i\in[0,d_0]:E\tx{ is \((d_0,i_0)\)-admitting}\}\).  Then we have \(E\cup T_{n,i_0}\ne[0,n]\), and necessarily, \(|E\setminus T_{n,i_0}|=d_0-i_0\).  A tight example is
			\[
			\{(X_1-X_2)\cdots(X_{2i_0-1}-X_{2i_0})\}\cup\bigg\{\sum_{i\in[0,n]}X_i-t:t\in E\setminus T_{n,i_0}\bigg\}.
			\]
			
			\item  \(\EHC_n(E)=\EPC_n(E)=\PPC_n(E)=\PHC_n(E)\).
			
			Let \(i_0=\max\{i\in[0,n]:T_{n,i}\subseteq E\}\).  A tight example is
			\[
			\bigg\{-(n-2i_0+j)X_{i-1-j}+\sum_{t\in[i_0-j]}X_t-j:j\in[0,i_0-1]\bigg\}\cup\bigg\{\sum_{t\in[0,n]}X_t-i:i\in E\setminus T_{n,i_0}\bigg\}.
			\]
		\end{enumerate}
	
		\item  Let \(G\) be an \SUt\,grid, and consider any \(E\subsetneq[0,N]\).  We have \(\PC_G(E)=\PPC_G(E)\).
		
		Let \(d_0=\min\{d\in[0,N]:E\tx{ is \(d\)-admitting}\}\), and \(i_0=\max\{i\in[0,d_0]:E\tx{ is \((d_0,i_0)\)-admitting}\}\).  Then we have \(E\cup T_{N,i_0}\ne[0,N]\), and necessarily, \(|E\setminus T_{N,i_0}|=d_0-i_0\).  A tight example is \(P(\mb{X})Q(\mb{X})\), where \(P(\mb{X})=\sum_{\gamma\in\ul{[0,i_0]}}c_\gamma\mb{X}^\gamma\) is a nontrivial solution of the linear system
		\[
		\sum_{\gamma\in\ul{[0,i_0]}}c_\gamma\mb{X}^\gamma(a)=0,\quad a\in\ul{[0,i_0-1]\cup\{N-i_0+1\}}.
		\]
		and
		\[
		Q(\mb{X})=\prod_{t\in E\setminus T_{N,i_0}}\bigg(\sum_{i\in[0,n]}X_i-t\bigg).
		\]
	\end{enumerate}

	\paragraph*{Acknowledgements.}  The author thanks
	\begin{itemize}
		\item  his graduate advisor Srikanth Srinivasan for valuable discussions throughout the gestation of this work, and unending support and encouragement.
		\item  Niranjan Balachandran for helpful comments on a preliminary version of this work.
		\item  Murali K. Srinivasan for a very enlightening discussion, as well as useful suggestions on the presentation of this work.
		\item  Lajos R\'onyai for pointers to some relevant literature.
		\item  Anurag Bishnoi for narrating the history of the hyperplane covering problems and the polynomial method, as well as for pointing out the recent work~\cite{ghosh-kayal-nandi-2022-alon-furedi}.
		\item  the anonymous referee for critical comments in an eagle-eyed review.
	\end{itemize}

	\raggedright
	\bibliographystyle{alpha}
	\bibliography{references}
	
	
\end{document}